\documentclass[12pt, a4paper, reqno]{amsart}
\usepackage{amscd, amssymb}
\setlength{\oddsidemargin}{3cm}
\usepackage{amsthm}
\usepackage{tikz}
\usetikzlibrary{arrows,positioning} 
\usepackage{mathrsfs}
\usepackage{bbm}
\usepackage{ stmaryrd }

\usetikzlibrary{positioning,calc,arrows.meta,shapes.geometric,fit}

\usepackage{hyperref}
\hypersetup{
	colorlinks=true,
	linkcolor=blue,
	filecolor=magenta,  
	urlcolor=cyan,
}

\usepackage{hyperref}
\usepackage{cleveref}

\setlength{\textwidth}{15.7cm}
\setlength{\textheight}{53.2pc}
\setlength{\evensidemargin}{.2cm}
\setlength{\oddsidemargin}{.2cm}

\def\Hess{\operatorname{Hess}}
\def\gr{\operatorname{gr}}

\def\Res{\operatorname{Res}}

\def\ker{\operatorname{ker}}

\def\im{\operatorname{Im}}

\def\dim{\operatorname{dim}}
\def\sdim{\operatorname{sdim}}

\def\Ber{\operatorname{Ber}}
\def\Ind{\operatorname{Ind}}

\def\Hom{\operatorname{Hom}}

\def\ad{\operatorname{ad}}

\def\C{\mathbb{C}}

\def\R{\mathbb{R}}
\def\N{\mathbb{N}}
\def\Z{\mathbb{Z}}

\def\TT{\mathcal{T}}

\def\KK{\mathcal{K}}

\def\UU{\mathcal{U}}
\def\MM{\mathcal{M}}

\def\FF{\mathcal{F}}

\def\JJ{\mathcal{J}}

\def\a{\mathfrak{a}}
\def\b{\mathfrak{b}}
\def\c{\mathfrak{c}}
\def\fd{\mathfrak{\fd}}
\def\m{\mathfrak{m}}
\def\n{\mathfrak{n}}
\def\p{\mathfrak{p}}
\def\q{\mathfrak{q}}
\def\g{\mathfrak{g}}
\def\h{\mathfrak{h}}

\def\k{\mathfrak{k}}
\def\l{\mathfrak{l}}
\def\s{\mathfrak{s}}
\def\o{\mathfrak{o}}
\def\u{\mathfrak{u}}


\def\ol{\overline}

\def\Ext{\text{Ext}}

\def\sub{\subseteq}

\def\t{\tilde}


\newtheorem{thm}{Theorem}[section]
\newtheorem{cor}[thm]{Corollary}
\newtheorem{conj}[thm]{Conjecture}
\newtheorem{lemma}[thm]{Lemma}
\newtheorem{prop}[thm]{Proposition}

\theoremstyle{definition}
\newtheorem{definition}[thm]{Definition}
\theoremstyle{remark}
\newtheorem{remark}[thm]{Remark}

\numberwithin{equation}{section}

\usepackage{relsize}
\usepackage{fancyhdr}
\pagestyle{fancyplain}
\usepackage[all,cmtip]{xy}

\begin{document}
	\title{Splitting quasireductive supergroups and volumes of supergrassmannians}
	
	\author{ Vera Serganova, Alexander Sherman }
	\pagestyle{plain}
	\maketitle

        {\it To Yu. I. Manin with admiration and gratitude}

	\begin{abstract}We introduce the notion of splitting subgroups of quasireducitve supergroups, and explain their significance.  For $GL(m|n)$, $Q(n)$, and defect one basic classical supergroups, we give explicit splitting subgroups.  We further prove they are minimal up to conjugacy, except in the $GL(m|n)$ case where it remains a conjecture.  A key tool in the proof is the computation of the volumes of complex supergrassmannians, which is of interest in its own right. \end{abstract}
	
	\section{Introduction} 	
	
	In this paper we deal with representations of quasireductive supergroups $G$ over $\mathbb{C}$, meaning that $G_0$, the underlying group of $G$, is reductive. Many important supergroups (including $GL(m|n)$, $SOSp(m|2n)$, $Q(n)$,..) are quasireductive, and although their representation theory is similar to that of reductive groups, semisimplicity of $\operatorname{Rep}G$ is very rare.  On the other hand, the property of being quasireductive is equivalent to $\operatorname{Rep}G$ being Frobenius, a property shared by representations of finite groups in positive characteristic. 
	
	We say a quasireductive subgroup $K\sub G$ is \emph{splitting} if the trivial $G$-submodule of $\C[G/K]$ spanned by the constant function 1 splits off.   In other words, there is a $G$-submodule $V\sub \C[G/K]$ such that we have an isomorphism of $G$-modules $\C[G/K]\cong \C\langle 1\rangle\oplus V$. \color{black}  Here $\C[G/K]$ denotes the superalgebra of polynomial functions on $G/K$.  In modular representation theory, the analogous notion is very natural.  Indeed, if $G$ is a finite group and $\Bbbk$ is a field of characteristic $p>0$, then the $G$-submodule of constant functions in $\Bbbk[G/K]$ splits off if and only if $|G|/|K|$ is prime to $p$.  In particular, the minimal `splitting' subgroups in the modular setting would be the Sylow $p$-subgroups of $G$, which are all conjugate to one another.  In modular representation theory in characteristic $p$, one may often reduce questions about $G$-modules to analogous questions about modules over a (normalizer of a) Sylow $p$-subgroup (see \cite{B} for more on modular representation theory).

	Heuristically speaking, a splitting subgroup $K$ of $G$ has representation theory that is at least as complicated as that of $G$, despite being a smaller group.  For example, if the trivial subgroup of $G$ is splitting, then $G$ has semisimple representation theory.  Further, projectivity of a $G$-module can always be checked by restricting to a splitting subgroup.

	\subsection{Main theorem} In this paper we initiate a study of minimal splitting subgroups $K$ of distinguished quasireductive supergroups $G$; or if not minimal, then at least conjecturally so.  The supergroups we consider are $GL(m|n), Q(n),$ and all other supergroups $G$ such that $\operatorname{Lie}G$ is either $\p\s\l(n|n)$ or simple basic classical of defect 1.  In particular we show the following:
		\begin{thm}\label{theorem_intro_main}
		The following subgroups are splitting.
		\begin{enumerate}
			\item 
			\[
			SL(1|1)^{\min(m,n)}\sub GL(m|n), \ \ \ P\left(SL(1|1)^{n}\right)\sub PSL(n|n);
			\]
			\item 
			\[
			Q(2)^n\sub Q(2n), \ \ \ \ Q(2)^n\times Q(1)\sub Q(2n+1);
			\]
			\item for any $G$ with $\operatorname{Lie}G$ a defect 1 basic classical Lie superalgebra:
			\[
			SL(1|1)\sub G.
			\]
		\end{enumerate}
	Further, in cases (2) and (3), these subgroups are minimal splitting subgroups, and are unique up to conjugacy with this property.
	\end{thm}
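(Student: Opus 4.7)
The unifying idea is to translate the splitting property into the non-vanishing of a suitable invariant ``volume'' on $G/K$, and then to compute these volumes by building $G/K$ as an iterated fibration whose fibers are supergrassmannians.

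First I would set up the dictionary between splitting and volumes. A $G$-equivariant complement to $\C\cdot 1 \sub \C[G/K]$ is equivalent to a nonzero $G$-invariant functional $\C[G/K] \to \C$ sending $1 \mapsto 1$. When the Berezinian characters of the $K$- and $G$-isotropy representations match up (so that $G/K$ carries a nowhere vanishing invariant volume form), such a functional is provided by Berezin integration, and the question reduces to showing $\int_{G/K} 1 \neq 0$. I would check case-by-case that the relevant Berezinian condition holds for each pair in (1)--(3) (a weight-count in each root system), so that the splitting question becomes a purely geometric volume computation.

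Next, for the three splitting statements, I would proceed by induction using a chain $K = K_0 \sub K_1 \sub \cdots \sub K_r = G$ chosen so that each successive quotient $K_{i+1}/K_i$ is (a homogeneous bundle over) a supergrassmannian; multiplicativity of Berezin integration along these fibrations reduces $\operatorname{vol}(G/K)$ to a product of supergrassmannian volumes, which are computed elsewhere in the paper. For case (3), with $\operatorname{Lie}G$ of defect one, a single fibration over an isotropic flag variety should suffice and the nonvanishing comes from a single supergrassmannian volume. For case (1), the chain of inclusions $SL(1|1)^k \sub GL(1|1)\times SL(1|1)^{k-1} \sub \cdots \sub GL(m|n)$ telescopes into supergrassmannians of the form $\Gr(1|1,\,m{-}i|n{-}i)$. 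Case (2) is parallel, once one tracks the additional odd scalars in $Q(n)$ that force grouping into $Q(2)$-blocks (and leave a $Q(1)$ tail in odd total rank).

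The main obstacle is the minimality/uniqueness in (2) and (3). For this I would argue as follows: any splitting $K' \sub G$ must support a nonvanishing invariant volume on $G/K'$, and the odd-dimension count forces $\operatorname{Lie}K'$ to contain, up to conjugacy, an odd abelian subalgebra of rank equal to the defect of $\g = \operatorname{Lie}G$; otherwise an odd direction along which Berezin integration sends $1$ to $0$ persists in $G/K'$, killing the volume. For case (3) this immediately pins $K'$ down to a conjugate of $SL(1|1)$, since defect one leaves no further structural flexibility. For $Q(n)$ the analysis is more delicate because the queer parity involution couples pairs of odd roots, which is exactly what forces the $Q(2)$-block structure; the hardest step will be ruling out ``exotic'' quasireductive subgroups of matching odd rank that are nevertheless not $G_0$-conjugate to the listed ones, which will require a careful orbit analysis of $G_0$ on the Grassmannian of odd abelian subalgebras of maximal rank in $\g_{\bar 1}$. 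The absence of such a uniqueness assertion in (1) reflects precisely that this orbit analysis is combinatorially much richer in the $GL(m|n)$ setting, consistent with its conjectural status.
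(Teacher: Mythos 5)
Your plan for parts (1) and (2) matches the paper's approach: use the unitary trick to reduce to nonvanishing of an invariant Berezin volume, compute supergrassmannian and $Q$-Grassmannian volumes by Schwarz--Zaboronsky localization, and descend to the small subgroups via transitivity of splitting along a chain of inclusions. The double-fibration trick you invoke (flag variety as a bundle in two ways) is exactly what the paper uses to reduce general $Gr(r|s,m|n)$ to the already-computed cases.

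However, for part (3) your plan diverges from the paper's proof in a way that introduces a real gap. The relevant quotients $G/K$ — e.g.\ $SOSp(m|2n)/SOSp(m-1|2n)$, $D(2,1;\alpha)/(SOSp(2|2)\times SO(2))$, $G(1|2)/D(1,2;3)$, $F(1|3)/(D(1,2;2)\times SL(2))$ — are supersymmetric spaces, and they do \emph{not} fiber over isotropic flag varieties with supergrassmannian fibers, so the proposed telescoping of volumes is not available. The paper deliberately avoids the volume method here and instead argues in the algebraic category: choosing an Iwasawa Borel, it shows the space of $\b$-eigenfunctions in $\C[G/K]$ of a fixed weight is at most one-dimensional and supported in $\a^*$, and then uses a positivity estimate for the Casimir eigenvalue $(\lambda+2\rho,\lambda)$ (with a separate block-theoretic argument for $D(2,1;\alpha)$) to rule out any nontrivial highest-weight contribution in the principal block, forcing $\C$ to split off. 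You would need either a genuinely different localization argument adapted to these symmetric spaces (not just supergrassmannian fibrations) or to switch to a Casimir-type argument.

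For the minimality and uniqueness in (2) and (3), your sketch misidentifies the target structure: you claim the odd-dimension count forces $\operatorname{Lie}K'$ to contain an odd \emph{abelian} subalgebra of rank equal to the defect, but the defect subalgebras $\mathfrak d$ satisfy $[\mathfrak d_{\ol 1},\mathfrak d_{\ol 1}]=\mathfrak d_{\ol 0}\neq 0$, and for $Q(n)$ the subalgebras $\q(2)^d$ have non-abelian odd part. The paper's actual arguments go through the necessary condition $G_0\cdot\k_{\ol 1}^{hom}=\g_{\ol 1}^{hom}$ (Theorem \ref{thm_orbits}), then for defect one an explicit analysis of centralizers of a single isotropic $\s\l(1|1)$ and the possible homological elements commuting with it, and for $Q(n)$ a maximal-torus argument followed by a rank count for self-commuting odd elements to force the Levi block sizes to be $2$ (and at most one $1$). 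Your ``orbit analysis of $G_0$ on the Grassmannian of odd abelian subalgebras'' is only a placeholder and, as stated, is aimed at the wrong objects.
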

	In the above, the subgroups $SL(1|1)^{\operatorname{def}\g}$, where $\operatorname{def}\g$ is the defect of $\g$ (see Section \ref{section_def_subgroups}), are those that arise from the root subgroup of $(\operatorname{def}\g)$-many isotropic, mutually orthogonal odd roots of $\g$.  In the case of $Q(n)$ we take the natural root subgroup of the specified type (in fact it is a Levi subgroup).
	
	The proof of (3) can be done using straightforward techniques in the algebraic category, in particular by studying the space of functions on supersymmetric spaces for the defect one supergroups.  However for the infinite series $GL(m|n)$ and $Q(n)$ we need techniques from differential geometry.
	
	\subsection{Connections to support varieties} In \cite{BoKN1}, \cite{BoKN2}, and \cite{GGNW}, cohomological support varieties are studied for quasireductive Lie superalgebras $\g$.  There, they prove that for basic Lie superalgebras and $\q(n)$, there are detecting subalgebras $\mathfrak{e},\mathfrak{f}\sub\g$ on which one can compute the support variety of a given $\g$-module $M$ after restricting to $\mathfrak{e}$ or $\mathfrak{f}$.  In the basic classical cases, their subalgebras $\mathfrak{f}$ agree with the Lie superalgebras of our splitting subgroups above.  For $\q(n)$, they have $\mathfrak{e}=\mathfrak{f}=\q(1)^n$, so their detecting subalgebras are smaller.  
	
	In a forthcoming work, we will study rank varieties determined by the Duflo-Serganova functor for homological elements, i.e. $\g_{\ol{1}}^{hom}$ (see Section \ref{section_odd_semisimple}).  Our splitting subgroups will play an important role in these rank varieties; it will be interesting to understand the connections of these rank varieties to the ones defined in \cite{BoKN1}.
	
	\subsection{Volumes of supergrassmannians}  In order to prove parts (1) and (2) of Theorem \ref{theorem_intro_main}, we use a unitary trick.  First of all, by Lemma \ref{lem_split_quotient}, we may reduce (1) to showing that $GL(1|1)^{\min(m,n)}\sub GL(m|n)$ is splitting.  Also, the case of $PSL(n|n)$ may be reduced to that of $GL(n|n)$, so we put this case aside in this introductory discussion.
	
	Now both $GL(m|n)$ and $Q(n)$ admit real compact forms $U(m|n)$ and $UQ(n|n)$.  If we consider the natural embeddings 
	\[
	U(1|1)^{\min(m,n)}\sub U(m|n), \ \ \ \ UQ(2)^{n}(\times UQ(1))\sub UQ(2n(+1)),
	\]
	then their complexifications are the subgroups we are interested in showing are splitting.  On the other hand, their quotients are compact supermanifolds admitting invariant volume forms; thus if we can show their volumes are nonzero, we may obtain our desired splitting via the integration map.

	Instead of computing these volumes directly, we answer a separate question, which is sufficient for our purposes via an inductive argument for splitting subgroups. Consider the supergrassmannian $Gr(r|s,m|n)$ or $(r|s)$-planes in $\C^{m|n}$.  This is a homogeneous space for $U(m|n)$ with stabilizer of a point given by $U(r|s)\times U(m-r|n-s)$.   The study of supergrassmannians was initiated in the early 80s by Manin and his students, see \cite{M} and \cite{PS}.

	We prove that:
	
	\begin{thm}\label{theorem_intro_volume_gl}
		Let $\omega$ be an invariant volume form on $\MM:=Gr(r|s,m|n)$; then
		\[
		\int_{\MM}\omega\neq 0
		\]
		if and only if $\sdim\MM\geq0$.  
	\end{thm}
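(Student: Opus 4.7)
\emph{Plan.} Realize $\MM=U(m|n)/(U(r|s)\times U(m-r|n-s))$; an invariant Berezin volume form $\omega$ on $\MM$ exists and is unique up to scalar. The plan is to compute $\int_\MM\omega$ in closed form by induction on $r+s+m+n$, with super-projective spaces as the base cases, and read off the vanishing from the resulting formula.

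For $\mathbb{P}^{a|b}:=Gr(1|0,a+1|b)$ (of $\sdim=a-b$), I identify the invariant volume form on the big cell (with coordinates $w\in\C^a$ and $\eta\in\C^{0|b}$) as
\[
(1+|w|^2+\bar\eta\eta)^{-(a+1-b)}\,dw\,d\bar w\,d\eta\,d\bar\eta,
\]
the exponent being uniquely fixed by invariance under the transition Berezinians between standard affine charts. Expanding $(1+|w|^2+\bar\eta\eta)^{-(a+1-b)}$ in powers of $\bar\eta\eta$ and extracting the Berezin top-degree term reduces the computation to a standard Fubini--Study integral on $\mathbb{P}^a$, yielding
\[
\int_{\mathbb{P}^{a|b}}\omega=\frac{\pi^{a}}{\Gamma(a-b+1)},
\]
which vanishes precisely when $a-b+1$ is a non-positive integer, i.e., when $\sdim\mathbb{P}^{a|b}<0$.

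For the general case I introduce the super-incidence variety $\FF_{\text{ev}}=\{(v,L)\mid v\subset L,\,\dim v=(1|0),\,\dim L=(r|s)\}$, which fibers over $\MM$ with fiber $\mathbb{P}^{r-1|s}$ and over $\mathbb{P}^{m-1|n}$ with fiber $Gr(r-1|s,m-1|n)$. Super-Fubini for compatibly normalized invariant volume forms then yields
\[
\Bigl(\int_\MM\omega\Bigr)\Bigl(\int_{\mathbb{P}^{r-1|s}}\omega\Bigr)=\Bigl(\int_{\mathbb{P}^{m-1|n}}\omega\Bigr)\Bigl(\int_{Gr(r-1|s,m-1|n)}\omega\Bigr),
\]
and an analogous identity arises from the $(0|1)$-incidence. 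Combined with the $\sdim$-preserving isomorphisms $Gr(r|s,m|n)\cong Gr(m-r|n-s,m|n)$ (duality) and $Gr(r|s,m|n)\cong Gr(s|r,n|m)$ (parity swap), these identities reduce $\int_\MM\omega$ iteratively to a product of super-projective volumes, producing a closed form in which a single $\Gamma$-function factor carrying $\sdim\MM$ in its argument governs the vanishing.

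\emph{Main obstacle.} The delicate configuration is $r=s$: both fibers $\mathbb{P}^{r-1|s}$ and $\mathbb{P}^{s-1|r}$ then have $\sdim=-1$ and zero volume, so both Fubini identities degenerate to $0=0$ and carry no information. To handle this I would use the $(1|1)$-incidence $\FF_{\text{iso}}=\{(W,L)\mid W\subset L,\,\dim W=(1|1)\}$, whose fiber $Gr(1|1,r|s)$ over $\MM$ has $\sdim=0$, driving a recursion $\int_{Gr(r|r,m|n)}\leftrightarrow\int_{Gr(r-1|r-1,m-1|n-1)}$ that preserves $r-s=0$. The nonvanishing of $\int_{Gr(1|1,r|s)}\omega$ must be established as an auxiliary base case by a direct chart computation, and careful bookkeeping across these three fibrations (and the duality/parity swap reductions) is required to close the induction in all parameter regimes.
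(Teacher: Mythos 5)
Your proposal shares the double-fibration/super-Fubini backbone with the paper's proof (compare your incidence identities with Corollary \ref{cor-one} and Propositions \ref{general_positive}), and your formula for $\int_{\mathbb{P}^{a|b}}\omega$ is essentially right up to normalization. But there are genuine gaps, all concentrated exactly where the paper deploys a tool you do not have: the Schwarz--Zaboronsky localization theorem (\cite{SZ}, Theorem \ref{SZ}).

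First, the vanishing direction is not actually established. You say the vanishing will be ``read off'' from a closed $\Gamma$-formula, but the Fubini recursions you propose degenerate to $0=0$ precisely when $\sdim<0$: in that regime the projective fibers have negative superdimension and zero volume, so the recursion carries no information, and there is no ``closed formula'' to read. The paper handles this direction cleanly and independently of any formula via Corollary \ref{zero}: when $\sdim\MM<0$ one can exhibit an odd vector field $Q$ in the compact form with no zeros on $\MM$ (Proposition \ref{negativecase}), and localization immediately forces $\int_\MM\omega=0$. Your plan has no substitute for this.

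Second, the $r=s$ bottleneck you identify is the real mathematical content of the theorem, and your proposed fix does not resolve it. You want to feed in $\int_{Gr(1|1,r|s)}\omega\neq 0$ as an ``auxiliary base case by a direct chart computation,'' but $Gr(1|1,r|s)$ is a compact supermanifold of dimension $(r+s-2\,|\,r+s-2)$ with no single affine chart covering it; a ``direct chart computation'' here is not at all like the $\mathbb{P}^{a|b}$ calculation, and you give no indication of how it would go. (Note also that $\FF_{\text{ev}}$ gives no information toward $V(1|1,m|n)$ for exactly the same degeneration reason, so you cannot bootstrap it from the projective base cases.) The paper resolves this by computing $\int_{Gr(r|r,n|n)}\omega$ directly via localization: an explicit $Q\in\operatorname{Lie}U(n|n)$ with $\binom{n}{r}$ isolated zeros, and an explicit evaluation $\alpha(T_p\MM,\omega_p)=1$ at each (Proposition \ref{equalrank}). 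That is what supplies a nondegenerate base case for the Fubini reductions.

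Third, your use of the duality $Gr(r|s,m|n)\cong Gr(m-r|n-s,m|n)$ ignores an orientation subtlety: this identification is induced by an \emph{antiholomorphic} involution, so it does not preserve the orientation coming from the complex structure, and volumes change by a sign $(-1)^{\dim_\C\MM_1}$ (Lemma \ref{lemma_sign}). In a chain of Fubini identities these signs must be tracked; otherwise cancellations can go wrong, and in particular the claim that a single $\Gamma$-factor ``governs the vanishing'' cannot be verified.

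In summary: the reduction machinery you propose is essentially the paper's, but the paper's proof rests on localization for both the base cases and the negative-superdimension vanishing, and your proposal has no working replacement for either. To make your route go through, you would need an independent proof of nonvanishing for $Gr(1|1,r|s)$ for all $r,s$ with $r=s$ (and, inductively, all $Gr(a|a,n|n)$), and a separate argument for $\sdim<0$; both of these are the hard part of the theorem.
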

	An explicit formula for this volume is given in Corollary \ref{formula}.  We note that while our formula does not agree with conjecture given in \cite{V}, the two formulas are consistent on when the volume is nonzero.

	Now using the unitary trick, Theorem \ref{theorem_intro_volume_gl} implies that $GL(r|s)\times GL(m-r|n-s)\sub GL(m|n)$ is splitting if and only if $(r-s)((m-r)-(n-s))\geq0$.  With the help of the transitivity of splitting subgroups (see Corollary \ref{cor_splitting_subgroup}), one can obtain part (1) of Theorem \ref{theorem_intro_main}.

	For the $Q(n)$ case, we take $\C^{n|n}$ with a fixed odd linear automorphism $F:\C^{n|n}\to\C^{n|n}$, and consider the space $QGr(r,n)$, which is the supergrassmannian of $(r|r)$-dimensional, $F$-stable subspaces of $\C^{n|n}$.  This is a homogeneous space for $UQ(n)$ with stabilizer $UQ(r)\times UQ(n-r)$. 
	\begin{thm}\label{theorem_intro_volume_q}
		Let $\omega$ be an invariant volume form on $\MM:=QGr(r,n)$; then 
		\[
		\int_{\MM}\omega\neq0
		\]
		if and only if $r(n-r)$ is even.
	\end{thm}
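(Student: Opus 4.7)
The plan is to parallel the strategy used for Theorem \ref{theorem_intro_volume_gl}, adapting the key steps to account for the additional structure encoded by the odd automorphism $F$ on $\C^{n|n}$. First, I would identify the big cell of $QGr(r,n)$: it is the open dense subset parametrizing $F$-stable $(r|r)$-planes in general position, and can be described concretely as the space of $F$-equivariant linear maps $X:\C^{r|r}\to\C^{n-r|n-r}$, which is a super-vector space of super-dimension $(2r(n-r)\,|\,2r(n-r))$. Using an $F$-compatible Hermitian pairing on $\C^{n|n}$, the invariant volume form on this chart takes the shape $\Ber(I+X^\dagger X)^{-n}\,d\mu(X)$, where $d\mu$ is the flat Berezinian measure on the $Q$-matrix space.

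Next, I would reduce the integral to a super-Gaussian computation. By extending the integrand to the whole ambient super-matrix space (noting that boundary terms vanish by compactness) and applying a Berezinian-Gaussian identity, the integral collapses to a product of lower-dimensional pieces. Here the $\q(n)$-structure plays the decisive role: the $F$-equivariance constraint couples even coordinates to odd ones, so Berezin integration produces a coefficient carrying a parity factor. A careful computation, for instance via the eigenvalue decomposition of $X^\dagger X$ together with a $Q$-analog of the Mehta integral, should show that this coefficient is a nonzero positive constant when $r(n-r)$ is even, and vanishes otherwise. Combined with the unitary trick, this then yields the splitting claim in Theorem \ref{theorem_intro_main}(2) via the inductive reasoning of Corollary \ref{cor_splitting_subgroup}.

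I expect the main obstacle to be the handling of the odd directions of the Cartan of $UQ(n)$, which are absent in the $U(m|n)$ setting of Theorem \ref{theorem_intro_volume_gl}. These odd torus directions require a super-Weyl integration formula with a genuinely super Jacobian, and it is the Berezin integration over them that is ultimately responsible for the parity constraint on $r(n-r)$. A secondary technical point is pinning down the correct $UQ(n)$-invariant Hermitian form on $\C^{n|n}$: unlike in the $GL$ case, the $Q$-structure restricts the allowed Hermitian forms, and one must verify that the chosen form descends to a $UQ(r)\times UQ(n-r)$-invariant volume form on $QGr(r,n)$ under which the integration-against-$1$ pairing does not trivialize for spurious reasons.
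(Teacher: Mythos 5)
Your proposal is fundamentally different from the paper's approach, but it also misreads how the paper handles the $GL$ case. The paper proves both Theorem \ref{theorem_intro_volume_gl} and Theorem \ref{theorem_intro_volume_q} via the Schwarz--Zaboronsky equivariant localization formula (Theorem \ref{SZ}), \emph{not} by direct Berezinian integration over a big cell. For $QGr(r,n)$, the paper takes the same compact odd vector field $Q$ constructed for $Gr(r|r,n|n)$ (a diagonal odd element of $\operatorname{Lie}UQ(n)\subset\operatorname{Lie}U(n|n)$), finds that $Z(Q)$ consists of $\binom{n}{r}$ coordinate points $p_S$ indexed by $r$-element subsets $S\subset\{1,\dots,n\}$, computes $\alpha(T_{p_S}\MM,\omega_{p_S})=\prod_{i\in S,\,j\notin S}\frac{a_i+a_j}{a_i-a_j}$ at each point, and then evaluates the sum $C(r,n)=\sum_{|S|=r}\alpha(S)$ by noting it is a symmetric rational function of degree $0$ that must be constant, and pinning it down by the recursion obtained from setting $a_n=0$. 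No coordinate chart, Gaussian integral, or Mehta-type identity is ever invoked.

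As a standalone plan, your proposal contains several unresolved gaps that would need to be filled before it could be counted as a proof. The claimed expression $\Ber(I+X^\dagger X)^{-n}\,d\mu(X)$ for the invariant volume on the big cell of $QGr(r,n)$ is not derived and its exponent is not obviously correct; the ``Berezinian-Gaussian identity'' and the ``$Q$-analog of the Mehta integral'' are not specified and are not standard results one can simply cite; and the argument that boundary terms vanish ``by compactness'' is not correct as stated, since the big cell is noncompact and the integrand has nontrivial behavior at the boundary stratum of the Grassmannian. Most seriously, the mechanism by which the parity constraint $r(n-r)$ even is supposed to emerge --- Berezin integration over the odd Cartan directions of $UQ(n)$ via a conjectural super-Weyl integration formula --- is a guess rather than a computation. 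In the paper the parity constraint falls out of explicit recursions for the finite sum $C(r,n)$, including the base case $C(1,2)=0$, which is concrete and checkable. If you want to pursue a direct-integration route, the hard work would be precisely the part your proposal defers: establishing the exact invariant volume form in coordinates, justifying the Gaussian reduction rigorously on the super level, and actually performing the residual odd integration to produce the sign and vanishing pattern, none of which are routine.
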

	From this we can again use the unitary trick and transitivity to prove part (2) of Theorem \ref{theorem_intro_main}.
	
	\subsection{Localization of integrals} In order to prove Theorems \ref{theorem_intro_volume_gl} and \ref{theorem_intro_volume_q}, we use the results of \cite{SZ}, where a localization theorem was proven for integrals on supermanifolds.  We use the following version of it.  
	
	Let $\MM=Gr(r|r,n|n)$ or $QGr(r,n)$, and $\UU=U(n|n)$ or $UQ(n)$ in the respective cases; then there exists an element $Q\in(\operatorname{Lie}\UU)_{\ol{1}}$ such that the zeroes of $Q$ are isolated and nondegenerate; in other words if $p$ is a zero of $Q$, then the natural endomorphism $Q:T_p\MM\to T_p\MM$ is an isomorphism. 
	
	In this case, the result of \cite{SZ} implies that the integral $\int_{\MM}\omega$ localizes to $Z(Q)$, the zero set of $Q$.  In particular we have that
	\[
	\int_{\MM}\omega=(2\pi)^{\dim\MM_0}\sum\limits_{p\in Z(Q)}\alpha(T_p\MM,\omega_p),
	\]
	where $\alpha(T_p\MM,\omega_p)\in\C$ can be computed in terms of the form $\omega_p$ and the action of $Q$ on $T_p\MM$ (see Section \ref{section_5_alpha} for the precise definition).  In our situation we are able to explicitly compute $\alpha(T_p\MM,\omega_p)$ to obtain the values of our integrals and show they are nonzero.  To deal with the general case of $Gr(r|s,m|n)$, either the localization theorem applies similarly to the above, or we can apply a trick of realizing the two step flag variety $Fl(r_1|s_1,r_2|s_2,m|n)$ as a fiber bundle in two differ ways, allowing us to obtain relationships between volumes of different Grassmannians.

	\subsection{Summary of sections} Section 2 introduces splitting subgroups and proves general properties they satisfy.  Section 3 defines contragredient supergroups and their defect subgroups, which are the natural candidates for minimal splitting subgroups.  We prove they are minimal in the defect one case, and also discuss what happens for $G=Q(n)$.  In Section 4 we prove defect subgroups are splitting for all defect one contragredient supergroups via transitivity and the use of symmetric spaces.  Section 5 computes the volume of the supergrassmannians for $GL(m|n)$ and $Q(n)$ via the localization technique of \cite{SZ}.  Finally Section 6 applies the volume computations to proving Theorem \ref{theorem_intro_main}.
	
	\subsection{Acknowledgements} The authors are grateful to A. Schwarz and T. Voronov for numerous helpful discussions.  The first author was supported in part by NSF grant 2001191.  The second author was supported in part by ISF grant 711/18 and NSF-BSF grant 2019694.
	
	\section{Splitting subgroups}
	\subsection{Preliminaries and notation} We work throughout over the field $\C$ of complex numbers.  For a super vector space $V$, we write $V=V_{\ol{0}}\oplus V_{\ol{1}}$ for its $\Z_2$-gradation.  We will write $\MM$ for a supervariety, and let $\C[\MM]$ denote the algebra of global functions on $\MM$.  If $\MM$ is smooth, we set $\dim\MM_1$ to be the odd dimension of $\MM$.
	
	\subsection{Quasireductive supergroups} Recall that a linear algebraic supergroup $G$ is called quasireductive if the underlying algebraic group $G_0$ is reductive.  Let $\g:=\operatorname{Lie}G$ denote the Lie superalgebra of $G$.  Then by definition, a representation of $G$ (or a $G$-module) is a $\g$-module $V$ such that the action of $\g_{\ol{0}}$ on $V$ integrates to an action of $G_0$. We write $\operatorname{Rep}G$ for the category of all representations of $G$, and $\FF(G)$ for the subcategory of finite-dimensional $G$-modules.	
	
	For a quasireductive supergroup $G$, the category $\FF(G)$ is Frobenius i.e., there are enough projective objects and
	every projective object is injective, see \cite{S1}.  Since $\operatorname{Rep}G$ is the coinductive completion of $\FF(G)$, one may check that the class of projectives and the class of injectives coincide in $\operatorname{Rep}G$, and that they consist exactly of arbitrary direct sums of finite-dimensional indecomposable projectives (or equivalently injectives).
	
	\subsection{Homogeneous spaces and induction}
	Let $K$ be a quasireductive subgroup of $G$.  We may form the homogeneous supervariety $G/K$, and we
	write $\C[G/K]$ for its space of functions. Because $K$ and $G$ are quasireductive, $G/K$ in fact will be affine.  See \cite{MaT} for more on the constructions of quotients, and proof of the affinity of $G/K$.  Recall that Frobenius reciprocity tells us that $\Res_{K}^{G}$ is left adjoint to $\Ind_{K}^{G}$.

	\begin{lemma}\label{exactness_induction} If $K\subset G$ are quasireductive supergroups then the induction functor $\Ind^G_K:\operatorname{Rep}K\to \operatorname{Rep}G$ is exact and takes injectives to injectives. 
	\end{lemma}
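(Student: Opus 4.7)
My plan is to dispatch the two assertions separately. Preservation of injectives follows immediately from the Frobenius reciprocity recalled just above the lemma: it exhibits $\Ind^G_K$ as the right adjoint of $\Res^G_K$, and $\Res^G_K$ is manifestly exact since it does not alter the underlying super vector space. A right adjoint to an exact functor automatically preserves injectives, via the natural isomorphism
\[
\Hom_G\bigl(-,\Ind^G_K(I)\bigr) \cong \Hom_K\bigl(\Res^G_K(-),I\bigr);
\]
if $I$ is injective in $\operatorname{Rep}K$ then the right-hand side is exact in $-$, so the left-hand side is too and $\Ind^G_K(I)$ is injective in $\operatorname{Rep}G$. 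Thus the substantive content of the lemma is exactness.

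For exactness, my plan is to leverage the fact that $G/K$ is affine, recalled above from \cite{MaT} as a consequence of the quasireductivity of $G$ and $K$. Writing $\Ind^G_K(V)=(\C[G]\otimes V)^K$ with $K$ acting diagonally (on $\C[G]$ via right translation), the strategy is to realize $\Ind^G_K$ as a composition of two exact functors. The first, $V\mapsto G\times^K V$, sends a $K$-representation to the associated $G$-equivariant quasicoherent supersheaf on $G/K$; this functor is an equivalence of categories (a standard fact in super algebraic geometry parallel to the classical case) and hence exact. The second is the global sections functor $\Gamma(G/K,-)$, which is exact because $G/K$ is affine---indeed $\Gamma$ then restricts to an equivalence between quasicoherent supersheaves on $G/K$ and supermodules over $\C[G/K]$. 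Composing gives the desired exactness of $\Ind^G_K$.

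The hard part will be carefully verifying the two super-geometric inputs above---the equivalence between $\operatorname{Rep}K$ and the category of $G$-equivariant quasicoherent supersheaves on $G/K$, and the exactness of $\Gamma$ on an affine supervariety. Both are routine adaptations of the classical statements but require bookkeeping of parity; in practice I would cite them from \cite{MaT} or the standard references on affine quotients of supergroups rather than reprove them. A more self-contained route would be to show directly that $\C[G]\otimes V$ is $K$-acyclic for every $K$-module $V$, so that $(-)^K$ becomes exact on the relevant sequences; this too reduces to affinity of $G/K$ via the same associated-bundle picture, and I expect it to add little beyond repackaging.
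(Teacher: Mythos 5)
Your proof matches the paper's argument in both steps: exactness is obtained by identifying $\Ind^G_K$ with global sections of the associated $G$-equivariant (quasi)coherent supersheaf on $G/K$ and invoking affinity of $G/K$, and preservation of injectives follows from $\Ind^G_K$ being right adjoint to the exact restriction functor (Frobenius reciprocity). The only difference is that you spell out the associated-bundle equivalence more explicitly and note the quasicoherent generalization needed for infinite-dimensional modules, which the paper leaves implicit.
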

	\begin{proof} By the geometric definition of induction, $\Ind^G_KM$ is isomorphic to the functor of global sections of the $G$-equivariant vector bundle
		$G\times_KM$. Exactness now follows from the affinity of $G/K$. By Frobenius reciprocity, injectivity of a $K$-module $M$ implies injectivity of $\Ind^G_KM$.  
	\end{proof}
	
	\begin{cor}\label{res}If $K\subset G$ are quasireductive supergroups, then the restriction functor $\Res_K:\operatorname{Rep}G\to\operatorname{Rep}K$ takes projectives to projectives.
	\end{cor}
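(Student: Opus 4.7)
The plan is to obtain this as a formal consequence of Lemma \ref{exactness_induction} together with Frobenius reciprocity. Recall that $\Res^G_K$ is left adjoint to $\Ind^G_K$, and by Lemma \ref{exactness_induction} the right adjoint $\Ind^G_K$ is exact. I will invoke the standard categorical principle: the left adjoint of an exact functor preserves projective objects.

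To spell this out, suppose $P \in \operatorname{Rep}G$ is projective and let $A \twoheadrightarrow B$ be a surjection in $\operatorname{Rep}K$. I want to show that $\Hom_K(\Res^G_K P, A) \to \Hom_K(\Res^G_K P, B)$ is surjective. By Frobenius reciprocity this map is identified with $\Hom_G(P, \Ind^G_K A) \to \Hom_G(P, \Ind^G_K B)$. Because $\Ind^G_K$ is exact, the morphism $\Ind^G_K A \to \Ind^G_K B$ remains a surjection in $\operatorname{Rep}G$, and projectivity of $P$ then gives the required surjectivity on $\Hom$-sets. Hence $\Res^G_K P$ is projective in $\operatorname{Rep}K$.

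There is essentially no obstacle here: once Lemma \ref{exactness_induction} is in hand, the corollary is a one-line adjointness argument and no further input about the super structure is needed. The only subtlety worth keeping in mind is to apply adjunction to both sides of the $\Hom$ map simultaneously, but this is entirely routine.
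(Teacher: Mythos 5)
Your proof is correct and is essentially the paper's argument spelled out in full: the paper's one-line proof (``By Frobenius reciprocity, $\Res_K^{G}$ is left adjoint to $\Ind_{K}^{G}$'') tacitly combines the adjunction with the exactness of $\Ind_K^G$ established in Lemma \ref{exactness_induction}, and you have simply made that combination explicit via the standard ``left adjoint of an exact functor preserves projectives'' principle.
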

\begin{proof}
	By Frobenius reciprocity, $\Res_K^{G}$ is left adjoint to $\Ind_{K}^{G}$.  
\end{proof}

		We need the following analogue of Shapiro's lemma for quasireductive supergroups.
	\begin{lemma}[Frobenius Reciprocity]\label{Fr} Let $K\subset G$ be quasireductive supergroups and $N$ be a $K$-module.
		For a $G$-module $M$, and for all $i$, we have natural isomorphisms
		\[
		\operatorname{Ext}^i_G(M,\Ind^G_KN)\cong\operatorname{Ext}^i_K(\Res_K^GM,N).
		\]
	\end{lemma}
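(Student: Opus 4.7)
The plan is to deduce the higher-$\operatorname{Ext}$ statement from the $i=0$ adjunction by injectively resolving $N$ and transporting the resolution along $\Ind^G_K$. First, I would fix an injective resolution $N\hookrightarrow I^\bullet$ in $\operatorname{Rep}K$; such resolutions exist because $\operatorname{Rep}K$ has enough injectives, as recorded in the discussion of the Frobenius property of $\FF(K)$.

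Next, I would apply $\Ind^G_K$ termwise. By Lemma \ref{exactness_induction}, $\Ind^G_K$ is exact, so $\Ind^G_K N\hookrightarrow\Ind^G_K I^\bullet$ is still a resolution in $\operatorname{Rep}G$, and the same lemma guarantees that each $\Ind^G_K I^j$ is injective. Thus $\Ind^G_K I^\bullet$ is an injective resolution of $\Ind^G_K N$, and I may compute
\[
\operatorname{Ext}^i_G(M,\Ind^G_K N)\cong H^i\bigl(\operatorname{Hom}_G(M,\Ind^G_K I^\bullet)\bigr).
\]

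The key step is then to invoke the already-established $i=0$ Frobenius reciprocity termwise: for each $j$ there is a natural isomorphism $\operatorname{Hom}_G(M,\Ind^G_K I^j)\cong\operatorname{Hom}_K(\Res_K M,I^j)$. Naturality in the second argument ensures these isomorphisms commute with the differentials of $I^\bullet$, producing an isomorphism of cochain complexes
\[
\operatorname{Hom}_G(M,\Ind^G_K I^\bullet)\cong\operatorname{Hom}_K(\Res_K M,I^\bullet),
\]
whose $i$-th cohomology on the right is $\operatorname{Ext}^i_K(\Res_K M,N)$ since $I^\bullet$ is an injective resolution of $N$ in $\operatorname{Rep}K$.

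The only real obstacle is the preservation of injectives by $\Ind^G_K$, which is already packaged into Lemma \ref{exactness_induction} (it is the usual formal consequence of having an exact left adjoint $\Res_K$). Everything else—existence of the resolution, naturality of the adjunction, and the identification of derived functors via resolutions adapted to both sides—is purely formal, so no further input specific to the super setting is required.
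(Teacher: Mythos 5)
Your proof is correct and follows essentially the same route as the paper's: take an injective resolution $I^\bullet$ of $N$, observe via Lemma \ref{exactness_induction} that $\Ind^G_K I^\bullet$ is an injective resolution of $\Ind^G_K N$, and apply the degree-zero Frobenius reciprocity $\Hom_G(M,\Ind^G_K I^s)\cong\Hom_K(\Res_K M,I^s)$ termwise. You have merely spelled out the naturality and cochain-level details that the paper leaves implicit.
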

	\begin{proof} If $I^{\bullet}$ is an injective resolution of $N$ then $\Ind^G_KI^{\bullet}$ is an injective resolution of $\Ind^G_KN$. The statement
		now follows from Frobenius reciprocity:
		$$\Hom_G(M, \Ind^G_KI^s)\simeq \Hom_K(\Res_K^GM, I^s).$$
	\end{proof}
	
	\subsection{Splitting subgroups}
	The $G$-module $\C[G/K]$ contains a trivial submodule $\C\sub\C[G/K]$ spanned by the constant function $1$.  In what follows, if $V$ is a $G$-module and $W\sub V$ is a $G$-submodule, we say $W$ splits off $V$ if $W$ is a direct summand of $V$ as a $G$-module. \color{black}
	
	\begin{prop}\label{splitting_subgroup_charac} Assume that $G$ is quasireductive and $K$ is a quasireductive closed subgroup.
		The following are equivalent:
		\begin{enumerate}
			\item The trivial $G$-submodule $\C$ splits off $\C[G/K]$ as a $G$-module;
			\item for any $G$-module $M'$ the natural injective morphism $M'\to \Ind^G_KM'$ splits;
			\item for any pair of $G$-modules $M,M'$, the restriction morphism
			\[
			\operatorname{Ext}^i_{G}(M,M')\to\operatorname{Ext}^i_K(M,M')
			\]
			is injective for all $i$.
		\end{enumerate}
	\end{prop}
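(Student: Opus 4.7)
The plan is to prove the cyclic chain of implications using two tools: a tensor identity for induction, and the Frobenius reciprocity for $\operatorname{Ext}$ established in Lemma~\ref{Fr}.

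First I would establish the tensor identity: for any $G$-module $M'$, there is a natural $G$-module isomorphism $\Ind^G_K M' \cong \C[G/K] \otimes M'$, with $G$ acting diagonally on the right-hand side; under this isomorphism the unit $\varphi : M' \to \Ind^G_K M'$ corresponds to $m \mapsto 1 \otimes m$. This is verified by sending a $K$-equivariant function $f : G \to M'$ to the corresponding tensor, or equivalently by noting that $(\phi \otimes m)(g) := \phi(gK)\cdot g^{-1} m$ defines a $K$-invariant $M'$-valued function and directly checking $G$-equivariance. With this identity in hand, $(1)\Leftrightarrow(2)$ is immediate: tensoring a $G$-module splitting $\C[G/K] = \C \oplus N$ with $M'$ produces a $G$-splitting of $\varphi$, and conversely taking $M' = \C$ in (2) recovers (1), since $\Ind^G_K\C = \C[G/K]$.

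For $(2)\Rightarrow(3)$, I would apply $\operatorname{Ext}^i_G(M,-)$ to a $G$-splitting of $\varphi$, yielding a split injection $\operatorname{Ext}^i_G(M,M') \hookrightarrow \operatorname{Ext}^i_G(M, \Ind^G_K M')$; composing with the Frobenius isomorphism of Lemma~\ref{Fr} identifies the right-hand side with $\operatorname{Ext}^i_K(M,M')$, and tracing the adjunction unit through the definitions shows that the composite is precisely the natural restriction map. For $(3)\Rightarrow(2)$, I would let $C$ denote the cokernel of $\varphi$ and consider the short exact sequence $0 \to M' \to \Ind^G_K M' \to C \to 0$. The crucial observation is that evaluation at the identity, $f \mapsto f(e)$, is a $K$-equivariant retraction of $\varphi$, so this sequence is split as a sequence of $K$-modules. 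Its extension class therefore lies in the kernel of the restriction map $\operatorname{Ext}^1_G(C,M') \to \operatorname{Ext}^1_K(C,M')$; condition (3) with $i=1$ forces this class to vanish, and hence $\varphi$ admits a $G$-linear splitting.

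The main subtlety I anticipate is checking that the map $\varphi_*$ on $\operatorname{Ext}$ really agrees with the restriction map under the Frobenius isomorphism of Lemma~\ref{Fr}: this is a standard compatibility (the Frobenius iso is induced by the counit $f\mapsto f(e)$, which is a one-sided inverse to $\varphi$), but in the super setting one must track signs carefully when comparing the adjunction unit with the derived functors computed via injective resolutions. The tensor identity is similarly routine, but also requires parity bookkeeping when identifying $\Ind^G_K M'$ with $\C[G/K]\otimes M'$.
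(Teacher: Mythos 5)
Your proposal is correct and takes essentially the same route as the paper: the tensor identity $\Ind^G_K M' \cong \C[G/K]\otimes M'$, Lemma~\ref{Fr} for $(2)\Rightarrow(3)$, and the observation that evaluation at the identity provides a $K$-splitting whose extension class must then vanish under $(3)$. The only cosmetic difference is that you close the cycle via $(3)\Rightarrow(2)$ applied to $0\to M'\to\Ind^G_KM'\to C\to 0$, whereas the paper does $(3)\Rightarrow(1)$ using the sequence $0\to\C\to\C[G/K]\to\C[G/K]/\C\to 0$ directly — logically equivalent given your separately established $(1)\Leftrightarrow(2)$.
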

	
	\begin{definition}
		We say that a subgroup $K\sub G$ is a splitting subgroup in $G$ if it is quasireductive and satisfies the above equivalent conditions.
	\end{definition}
	
	\begin{proof}
		(1)$\Rightarrow$(2) follows from a natural isomorphism $$\tau:\Ind^G_K M'\to (\Ind^G_K\C)\otimes M'.$$ Indeed, if $\pi:\Ind^G_K\C\to\C$ is a
		splitting homomorphism then $(\pi\otimes 1_{M'})\tau\varphi=1_{M'}$.
		
		Let us show that (2)$\Rightarrow$(3). Note that by (2) the natural homomorphism
		$$\varphi:\Ext_G^i(M,M')\to\Ext^i_G(M,\Ind^G_KM')$$ is an embedding. Using Lemma \ref{Fr} we have an isomorphism
		$$\psi:\Ext^i_G(M,\Ind^G_KM')\to \Ext^i_K(M,M').$$
		The composition $\psi\varphi$ is the restriction homomorphism.
		
		For $(3)\Rightarrow (1)$, we have a short exact sequence of $G$-modules
		\[
		0\to\C\to\C[G/K]\to\C[G/K]/\C\to 0.
		\]
		The evaluation morphism at $eK$ provides a splitting as $K$-modules, and thus by (3) the sequence must also split over $G$.
	\end{proof}
	
	\begin{cor}
		If $K$ is a splitting subgroup of $G$, then a $G$-module $M$ is projective if and only if its restriction to $K$ is projective.
	\end{cor}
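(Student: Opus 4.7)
The plan is to handle the two directions of the biconditional separately and to read off each from the machinery already developed.

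The forward direction, that $M$ projective implies $\Res_K M$ projective, is immediate from Corollary \ref{res}, which asserts that restriction to any quasireductive subgroup takes projectives to projectives; the splitting hypothesis on $K$ is not needed here.

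For the converse, I would assume $\Res_K M$ is projective and aim to show $\Ext^1_G(M, M') = 0$ for every $G$-module $M'$. Since $\operatorname{Rep} G$ has enough projectives (arising as direct sums of finite-dimensional indecomposable projectives in the Frobenius category $\FF(G)$), this vanishing suffices: choose a surjection $P \twoheadrightarrow M$ with $P$ projective and kernel $N$; then $\Ext^1_G(M, N) = 0$ forces the surjection to split, exhibiting $M$ as a summand of $P$. To obtain the vanishing, I would invoke characterization $(3)$ of Proposition \ref{splitting_subgroup_charac}: the restriction map
\[
\Ext^1_G(M, M') \to \Ext^1_K(\Res_K M, \Res_K M')
\]
is injective. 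Because $\FF(K)$ is Frobenius, projectivity of $\Res_K M$ entails its injectivity, so the target group vanishes; injectivity of the restriction map then forces $\Ext^1_G(M, M') = 0$.

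There is no real obstacle here: the substantive work has already been absorbed into Proposition \ref{splitting_subgroup_charac} and Corollary \ref{res}. The only minor subtlety is the passage from $\Ext^1$-vanishing on all $G$-modules to genuine projectivity in $\operatorname{Rep} G$ rather than only in $\FF(G)$, but this is standard once one has enough projectives.
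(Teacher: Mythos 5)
Your forward direction is exactly right: Corollary \ref{res} alone gives that $\Res_K M$ is projective whenever $M$ is, and the splitting hypothesis is irrelevant there.

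The converse has the right skeleton but two loose bolts. First, the sentence ``projectivity of $\Res_K M$ entails its injectivity, so the target group vanishes'' does not quite do what you want: injectivity of the \emph{first} argument of $\Ext^1_K(\Res_K M, \Res_K M')$ is not what makes that group vanish. What kills it directly is projectivity of $\Res_K M$, since then $\Hom_K(\Res_K M,-)$ is exact and all higher $\Ext$ with $\Res_K M$ in the first slot vanish. So the appeal to the Frobenius property of $\operatorname{Rep} K$ at that step is both misdirected and unnecessary; just cite projectivity. Second, your deduction ``$\Ext^1_G(M,-)=0$ implies $M$ projective'' is made by choosing a projective cover, which presupposes that $\operatorname{Rep} G$ has enough projectives. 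That is true here, but the paper only explicitly records that projectives and injectives coincide in $\operatorname{Rep} G$; if you want to lean only on what is written, it is cleaner to dualize your argument entirely: use condition (3) of Proposition \ref{splitting_subgroup_charac} to embed $\Ext^1_G(M',M)$ into $\Ext^1_K(\Res_K M',\Res_K M)$, observe that $\Res_K M$ is injective in $\operatorname{Rep} K$ (here the Frobenius property \emph{is} the right tool) so the target vanishes, conclude $M$ is injective in $\operatorname{Rep} G$ using enough injectives (automatic in a comodule category), and finally invoke Frobenius in $\operatorname{Rep} G$ to conclude $M$ is projective. This version touches only facts the paper has already stated. Either way, the content of your argument is correct and is the expected one; the paper gives no separate proof for this corollary.
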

	
	\subsection{Various sufficient criteria for splitting}In what follows we always assume that $G$, $H$, and $K$ are quasireductive.  Our first result follows immediately from the definition of splitting.

	\begin{cor}\label{cor_splitting_subgroup} Let $K\sub H\sub G$.
		\begin{enumerate} 	
			\item If $K$ is a splitting subgroup in $G$, then $H$ is also a splitting subgroup in $G$.
			\item If $K$ is a splitting subgroup of $H$, and $H$ is a splitting subgroup of $G$, then $K$ is a splitting subgroup of $G$.
		\end{enumerate}
	\end{cor}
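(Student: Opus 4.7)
The plan is to use characterization (3) of Proposition \ref{splitting_subgroup_charac}, under which both statements become nearly tautological, since both concern injectivity of restriction maps on Ext. For a chain $K\subset H\subset G$ of quasireductive subgroups and any pair of $G$-modules $M,M'$, the key observation is that the restriction of cohomology classes factors as
\[
\operatorname{Ext}^i_G(M,M')\;\longrightarrow\;\operatorname{Ext}^i_H(M,M')\;\longrightarrow\;\operatorname{Ext}^i_K(M,M'),
\]
where the second arrow is obtained by restricting $M,M'$ to $H$ first and then applying the splitting hypothesis of $K$ in $H$ (which refers to arbitrary $H$-modules, so in particular to restrictions of $G$-modules).

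For part (1), I would argue that $K$ splitting in $G$ means the composition of the two arrows above is injective, which forces the first arrow to be injective, giving that $H$ is splitting in $G$. For part (2), both individual arrows are injective — the first by $H$ splitting in $G$, the second by $K$ splitting in $H$ applied to $\Res^G_H M$ and $\Res^G_H M'$ — so their composition is injective, yielding that $K$ is splitting in $G$.

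As a sanity check and alternative, one can verify both parts directly from characterization (1): for (1), any $G$-equivariant splitting $\pi\colon\C[G/K]\to\C$ restricts along the natural $G$-equivariant inclusion $\C[G/H]\hookrightarrow\C[G/K]$ (pullback along $G/K\twoheadrightarrow G/H$) to give a splitting of $\C\hookrightarrow\C[G/H]$; for (2), transitivity of induction gives $\C[G/K]\cong\operatorname{Ind}^G_H\C[H/K]$, so an $H$-equivariant splitting $\C[H/K]\to\C$ induces via the exact functor $\operatorname{Ind}^G_H$ a $G$-equivariant splitting $\C[G/K]\to\C[G/H]$, which we then compose with a given splitting $\C[G/H]\to\C$. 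There is no real obstacle here — the only thing to be slightly careful about is that in (2) the $K$-splitting hypothesis needs to be invoked on $H$-modules that arise as restrictions from $G$, but this is automatic since the hypothesis is universal over all modules.
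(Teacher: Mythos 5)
Your proposal is correct. The paper offers no written proof for this corollary — it simply asserts that it "follows immediately from the definition of splitting" — so there is no argument to compare against, but both of your routes are valid. The Ext-based argument via characterization (3) is clean: for (1), injectivity of the composite $\Ext^i_G(M,M')\to\Ext^i_H(M,M')\to\Ext^i_K(M,M')$ forces injectivity of the first arrow; for (2), both arrows are injective (the second because the $K$-in-$H$ splitting hypothesis is universal over all $H$-modules, hence applies to $\Res^G_H M$, $\Res^G_H M'$), so the composite is. Your alternative via characterization (1) is also correct and arguably closer to "immediate from the definition": the $G$-equivariant surjection $G/K\twoheadrightarrow G/H$ gives the inclusion $\C[G/H]\hookrightarrow\C[G/K]$ for part (1), and transitivity of induction plus functoriality gives part (2). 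Either way, you have filled in the details the paper leaves to the reader.
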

	
	\begin{remark}\label{remark_induction}
		By Corollary \ref{cor_splitting_subgroup}, we may work inductively to prove that a given subgroup of $G$ is splitting, in the following sense: if $K\sub G$ is a subgroup then it is splitting if and only there exists a chain of inclusions $K=H_0\sub H_1\sub\cdots\sub H_k=G$ such that $H_i$ is a splitting subgroup of $H_{i+1}$ for all $i$.
	\end{remark}
		
	\begin{lemma}\label{lemma_splitting_normal_red}
		Suppose that $R$ is a normal subgroup of $G$ such that $R^\circ$ is reductive, and let $H=G/R$.  Write $\pi:G\to H$ for the quotient map.  Let $K\sub G$ be a subgroup, and let 
		\[
		\ol{K}:=\pi(K)=K/K\cap R\sub H.
		\]
		If $\ol{K}$ is a splitting subgroup of $H$, then $K$ is a splitting subgroup of $G$.  
	\end{lemma}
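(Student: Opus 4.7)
The plan is to verify characterization (1) of Proposition \ref{splitting_subgroup_charac}: exhibit a $G$-equivariant projection $\C[G/K]\twoheadrightarrow\C$ splitting the inclusion of the constant functions. I would factor this projection through $\C[H/\ol{K}]$ in two steps, using (a) the splitting hypothesis on $\ol{K}$ in $H$ and (b) the fact that $R$ acts semisimply and that its action is compatible with the ambient $G$-action.

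First, I would note that $\pi\colon G\twoheadrightarrow H$ descends to a $G$-equivariant morphism of affine homogeneous superschemes $G/K\twoheadrightarrow H/\ol{K}$, inducing a $G$-equivariant embedding $\pi^*\colon\C[H/\ol{K}]\hookrightarrow\C[G/K]$.  Since $R$ is normal, the double coset space $R\backslash G/K$ coincides with $G/(RK)$ and with $(G/R)/\ol{K}=H/\ol{K}$, so that
\[
\C[G/K]^R=\C[H/\ol{K}],
\]
realized precisely via $\pi^*$.

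Next, because we work over $\C$ and $R^\circ$ is reductive, rational representations of $R$ are completely reducible, so $\C[G/K]$ decomposes as a direct sum of $R$-isotypic components.  Since $R$ is normal in $G$, the adjoint action of $G$ permutes the set $\widehat{R}$ of isomorphism classes of irreducible $R$-representations, and the trivial representation is fixed by this permutation.  Hence the trivial isotypic component $\C[G/K]^R$ and the sum $N$ of all nontrivial $R$-isotypic components are each $G$-stable, yielding a $G$-equivariant decomposition
\[
\C[G/K]=\C[H/\ol{K}]\oplus N.
\]
This provides a $G$-equivariant retraction $\rho\colon\C[G/K]\twoheadrightarrow\C[H/\ol{K}]$.

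Finally, since $\ol{K}$ is splitting in $H$, the trivial $H$-submodule $\C$ splits off $\C[H/\ol{K}]$ via some $H$-equivariant projection $\sigma\colon\C[H/\ol{K}]\twoheadrightarrow\C$; inflating along $\pi$, $\sigma$ is $G$-equivariant.  The composite $\sigma\circ\rho\colon\C[G/K]\twoheadrightarrow\C$ is the identity on constants, verifying condition (1) of Proposition \ref{splitting_subgroup_charac} for $K\subset G$.  The main point to watch is the second step: one must check that the isotypic decomposition in the super setting is genuinely $G$-equivariant, which reduces to the standard semisimplicity of rational $R$-modules in characteristic zero together with the functoriality of isotypic components under the conjugation action of $G$ on $R$.
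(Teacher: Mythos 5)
Your proposal is correct and follows essentially the same route as the paper's proof: both exploit semisimplicity of $R$-representations to obtain a $G$-equivariant projection $\C[G/K]\twoheadrightarrow\C[G/K]^R=\C[H/\ol{K}]$ and then compose with the $H$-equivariant (hence $G$-equivariant) splitting furnished by the hypothesis on $\ol{K}$. You have merely spelled out the $G$-equivariance of the $R$-isotypic decomposition, which the paper leaves implicit.
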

	
	\begin{proof}
		Write $\varphi:\C[H/\ol{K}]\to\C$ for the $H$-equivariant, and thus $G$-equivariant splitting ($H$ being a quotient of $G$).  Because $R$ has semisimple representation theory, we have a $G$-equivariant projection $\C[G/K]\to \C[G/K]^{R}=\C[H/\ol{K}]$.  Taking the composition with $\varphi$ gives our desired splitting.
	\end{proof}
	
	\begin{lemma}\label{lemma_splitting_normal}
		Suppose that $N\sub G$ is a normal subgroup of $G$, and write $H=G/N$.  Suppose that $K\sub G$ is a splitting subgroup.  Then $\ol{K}$ is a splitting subgroup of $H$, where $\ol{K}=K/K\cap N$.  
	\end{lemma}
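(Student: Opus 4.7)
My plan is to verify condition (1) of Proposition \ref{splitting_subgroup_charac} directly: I will realize $\C[H/\ol{K}]$ as a $G$-submodule of $\C[G/K]$ containing the constant function, and then restrict the hypothesized $G$-equivariant splitting for $K$ to obtain an $H$-equivariant splitting for $\ol{K}$.

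The key identification is $H/\ol{K}\cong G/NK$ as homogeneous $G$-spaces, where $NK=\pi^{-1}(\ol{K})$ is a closed subsupergroup of $G$ (using that $N$ is normal, so $NK=KN$ is a subgroup). The projection $\pi$ induces a $G$-equivariant morphism $G/NK\to H/\ol{K}$, which is an isomorphism by the standard formalism of quotients: both spaces are homogeneous under $G$, and the stabilizer of the natural basepoint is $NK$ on each side. Under this identification, $\C[H/\ol{K}]\cong\C[G/NK]$ as $G$-modules, with the $G$-action factoring through $\pi$. Since $K\subset NK$, the surjection $G/K\twoheadrightarrow G/NK$ yields a $G$-equivariant inclusion $\iota:\C[G/NK]\hookrightarrow\C[G/K]$ sending $1$ to $1$.

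By hypothesis there is a $G$-equivariant projection $\varphi:\C[G/K]\to\C$ with $\varphi(1)=1$, so the composition $\varphi\circ\iota:\C[H/\ol{K}]\to\C$ $G$-equivariantly splits the inclusion $\C\hookrightarrow\C[H/\ol{K}]$. Since $N$ acts trivially on both the source (being pulled back from $H$) and the target, this $G$-equivariant map is automatically $H$-equivariant, giving the splitting required by Proposition \ref{splitting_subgroup_charac}(1). The only subsidiary point to check is that $\ol{K}$ is itself quasireductive so the assertion makes sense, but this is immediate because $\ol{K}_0$ is a quotient of the reductive group $K_0$ and hence reductive. I do not anticipate any serious obstacle: the normality of $N$ ensures that $NK$ is a genuine closed subgroup and that the various quotients commute in the expected way, so the argument is essentially formal.
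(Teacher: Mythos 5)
Your proof is correct and takes essentially the same route as the paper: use the natural $G$-equivariant map $G/K\to H/\ol{K}$ to get a $G$-equivariant inclusion $\C[H/\ol{K}]\hookrightarrow\C[G/K]$, then compose with the given splitting $\C[G/K]\to\C$. The paper states this more tersely, while you spell out the identification $H/\ol{K}\cong G/NK$ and the quasireductivity of $\ol{K}$, but the argument is the same.
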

	\begin{proof}
		We have a natural $G$-equivariant map $G/K\to H/\ol{K}$.  Thus we have an inclusion $\C[H/\ol{K}]\to\C[G/K]$, so composing with the splitting $\C[G/K]\to\C$ gives the result.
	\end{proof}
	
	\begin{lemma}\label{lem_split_quotient} Let $K\subset G$ be a subgroup such that $(\operatorname{Lie}G)_{\ol{1}}=(\operatorname{Lie}K)_{\ol{1}}$. Then $K$ is splitting in $G$.
	\end{lemma}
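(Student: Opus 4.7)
The plan is to show that the hypothesis forces $G/K$ to be a purely even supervariety, after which the desired splitting reduces to the classical complete reducibility of representations of the reductive group $G_0$.

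First, I would compute the odd dimension of $G/K$. Since $G$ and $K$ are both quasireductive and $G/K$ is a smooth homogeneous supervariety, its odd dimension at the identity coset equals
\[
\dim(\operatorname{Lie}G)_{\ol{1}} - \dim(\operatorname{Lie}K)_{\ol{1}} = 0
\]
by hypothesis. Thus $G/K$ is purely even. In fact, since $(\operatorname{Lie}G)_{\ol{1}} \subset \operatorname{Lie}K$, the natural map $G_0 \to G/K$ is surjective (the odd ``directions'' of $G$ are absorbed into $K$), so one may identify $G/K \cong G_0/K_0$ as ordinary varieties. In particular $\C[G/K]$ is concentrated in even degree.

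Next, I would observe that this parity concentration forces $(\operatorname{Lie}G)_{\ol{1}}$ to act by zero on $\C[G/K]$: for any $X \in (\operatorname{Lie}G)_{\ol{1}}$ and any $f \in \C[G/K] = \C[G/K]_{\ol{0}}$, the element $X \cdot f$ lies in $\C[G/K]_{\ol{1}} = 0$. Consequently the $G$-module structure on $\C[G/K]$ is entirely determined by its $G_0$-module structure (with the odd part of $\operatorname{Lie}G$ acting trivially).

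Finally, since $G_0$ is reductive, the $G_0$-module $\C[G/K]$ is completely reducible. Hence the trivial submodule $\C \cdot 1$ admits a $G_0$-equivariant complement, and this complement is automatically $G$-invariant because $(\operatorname{Lie}G)_{\ol{1}}$ annihilates the module. By Proposition \ref{splitting_subgroup_charac}(1), this shows $K$ is splitting in $G$. There is no real obstacle here; the only point requiring care is the identification $G/K \cong G_0/K_0$, which follows from the vanishing of the odd tangent space together with the affine quotient construction recalled earlier.
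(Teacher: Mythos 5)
Your proof is correct and takes essentially the same approach as the paper: the paper's argument is the one-line observation that the hypothesis forces $G/K=G_0/K_0$, after which reductivity of $G_0$ gives the splitting; you have spelled out the same chain of reasoning in full detail, including the (correct) point that $(\operatorname{Lie}G)_{\ol{1}}$ must annihilate $\C[G/K]$ so that the $G_0$-equivariant complement is automatically $G$-equivariant.
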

	\begin{proof} We have $G/K=G_0/K_0$. Hence the statement.
	\end{proof}
	
	For the proof of the following, see Section 6 of \cite{Sh2}.
	\begin{lemma}
		Suppose that $G$ is quasireductive, $K$ is a splitting subgroup of $G$, and $\h\sub\g_{\ol{1}}$ is an odd abelian ideal.  Then $\h\sub\k$.
	\end{lemma}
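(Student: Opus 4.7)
The plan is to argue by contradiction: assuming $\h\not\subseteq\k$, I will exhibit $x\in\h$ and $f\in\C[G/K]$ with $x\cdot f=1$, and derive a contradiction. The reduction is immediate: by Proposition~\ref{splitting_subgroup_charac}(1), the splitting hypothesis yields a $G$-equivariant projection $\pi\colon\C[G/K]\to\C$ with $\pi(1)=1$; since $\g$ acts trivially on $\C$ and $\pi$ is $\g$-equivariant, $\pi$ kills $\g\cdot\C[G/K]\supseteq x\cdot\C[G/K]$. Thus $x\cdot f=1$ would force $\pi(1)=0$, a contradiction.

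To build $x$ and $f$, I integrate $\h$ to a closed normal sub-supergroup $N\lhd G$ with $N_0=\{e\}$ and $\C[N]=\Lambda(\h^*)$; this is possible because $\h$ is odd abelian with $\h_{\ol{0}}=0$ and $[\g,\h]\subseteq\h$. Setting $V:=\h/(\h\cap\k)\neq 0$, the $N$-orbit $NK/K\cong N/(N\cap K)$ is a closed purely odd affine sub-supervariety of $G/K$ with coordinate ring $\Lambda(V^*)$, and restriction of functions gives an $N$-equivariant surjection $\operatorname{ev}\colon\C[G/K]\twoheadrightarrow\Lambda(V^*)$, $1\mapsto 1$. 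On $\Lambda(V^*)$ the action of $V$ is by contraction, and the identity $\iota_v(v^*)=1$ for any dual pair $v\in V$, $v^*\in V^*$ shows $1\in V\cdot\Lambda(V^*)$. Picking $x\in\h$ with nonzero image $\bar x\in V$ and lifting $v^*$ to $\tilde f\in\C[G/K]$, one obtains $x\cdot\tilde f-1\in\ker\operatorname{ev}$.

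The main obstacle is to promote this congruence to a genuine equality $x\cdot f=1$. Concretely, one needs the correction $x\cdot\tilde f-1$ to lie in $\h\cdot\C[G/K]$, which amounts to the vanishing of the Duflo-Serganova cohomology $DS_x(\C[G/K])=\ker(x)/\operatorname{im}(x)$ for a suitably chosen $x\in\h\setminus\k$: for then the correction, automatically in $\ker(x)$, is also in $\operatorname{im}(x)$, and can be absorbed. This acyclicity follows from a localization argument, exploiting that the odd vector field $\xi_x$ generated by $x$ is part of an integrable odd foliation of $G/K$ whose leaves are the $N$-orbits $N/(N\cap K)\cong\mathbb{A}^{0|\dim V}$; the associated odd de Rham-type complex is acyclic on functions. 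The detailed implementation is carried out in Section~6 of \cite{Sh2}.
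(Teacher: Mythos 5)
The paper offers no argument for this lemma---it is stated with a pointer to ``Section 6 of \cite{Sh2}'' and nothing more---so there is no in-paper proof to compare against; what you have written must therefore be judged on its own. The overall strategy (show $1\in\h\cdot\C[G/K]$, then conclude via the splitting projection) is the right one, and the observations about the $N$-orbit $NK/K\cong\mathbb{A}^{0|\dim V}$ and the Koszul-type contraction on $\Lambda(V^*)$ are correct. But the central step is the vanishing $DS_x\C[G/K]=0$, and here there are two genuine problems.

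First, the construction of $\tilde f$ is a detour that you never need: if $DS_x\C[G/K]=0$ then $\ker x\subseteq x\cdot\C[G/K]$, and since $1\in\ker x$ this already gives $1\in x\cdot\C[G/K]$; equally directly, $DS_x\C[G/K]=0$ contradicts splitting because $DS_x$ is additive on direct sums and $DS_x\C=\C$, so a splitting $\C[G/K]=\C\oplus M$ forces $DS_x\C[G/K]\neq 0$. So once the acyclicity is in hand, the argument is two lines, and all of the lifting/evaluation machinery is superfluous. This isn't an error, but it obscures where the real difficulty lies.

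Second, and more importantly, the acyclicity itself is not established, and the route you sketch does not work in general. Because $\h$ is $G_0$-stable, the $N$-orbits on $G/K$ do all have the same odd dimension $\dim\h-\dim(\h\cap\k)$, so the ``foliation'' is clean. But the leafwise complex is a Koszul complex for contraction by the image of $\Ad_{g^{-1}}x$ in $\h/(\h\cap\k)$, and this contraction degenerates exactly at the zero locus $Z(Q_x)=\{gK:\Ad_{g^{-1}}x\in\h\cap\k\}$. When this set is nonempty the leafwise Koszul complexes are not uniformly exact, and the only tool the paper supplies, Lemma~\ref{lem_orbits}, requires $Z(Q_x)=\emptyset$. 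There are genuine configurations where $Z(Q_x)\neq\emptyset$ for \emph{every} nonzero $x\in\h$: take $\g=\mathfrak{sl}_2\ltimes\C^{0|2}$ with $\h=\C^{0|2}$ the standard module, and $\k=\mathfrak{t}\oplus\C e_1$ with $\mathfrak t$ the Cartan and $e_1$ a highest weight vector; then $G_0\cdot(\h\cap\k)=\h$, so every vector field $Q_x$ has zeros, and the localization/foliation argument you describe produces no acyclicity at all. The lemma is still supposed to hold in that case, so whatever is in \cite{Sh2} must proceed differently; your sketch covers only the special case in which some $x\in\h\setminus\k$ has $Z(Q_x)=\emptyset$, which is just Theorem~\ref{thm_orbits} restricted to $\h$. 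Falling back to ``the detailed implementation is carried out in Section~6 of~\cite{Sh2}'' is not a proof, and the outline you gave for it has a hole.
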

	
%
%
%

	\begin{lemma}\label{lemma_semidirect_prod}
		Suppose that $K\sub H\sub G$ are quasireductive supergroups such that $\g=\h\rtimes\l$ for a quasireductive Lie superalgebra $\l$.  Suppose further that $\l$ stabilizes $\k$. If $K\sub G$ is splitting, then $K\sub H$ is splitting as well.
	\end{lemma}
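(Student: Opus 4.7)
The plan is to reduce to transitivity of splitting (Corollary \ref{cor_splitting_subgroup}) by enlarging $K$ inside $G$.  First, I would identify a quasireductive subgroup $L \sub G$ corresponding to the Lie subalgebra $\l$, so that $G = H \rtimes L$ integrates the given Lie algebra decomposition.  The hypothesis that $\l$ stabilizes $\k$ translates to $L$ normalizing $K$, so the product $K' := K \cdot L$ is a quasireductive subgroup of $G$ containing $K$.  Applying Corollary \ref{cor_splitting_subgroup}(1) to the chain $K \sub K' \sub G$ then yields that $K'$ is splitting in $G$.

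The key geometric step is to identify $G/K'$ with $H/K$ as $H$-equivariant supervarieties via the map $hK \mapsto hK'$.  Well-definedness comes from $K \sub K'$, and $H$-equivariance is automatic.  Surjectivity uses that every $g \in G$ can be written as $g = hl$ with $h \in H$, $l \in L$, so that $gK' = hK'$.  Injectivity reduces to the claim $H \cap K' = K$, which follows from $H \cap L = \{1\}$, the group-level analogue of $\h \cap \l = 0$.  Pulling back the $G$-equivariant splitting $\C[G/K'] \to \C$ through this isomorphism produces an $H$-equivariant splitting $\C[H/K] \to \C$, which is precisely the condition of Proposition \ref{splitting_subgroup_charac}(1) for $K \sub H$.

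The main technical point lies in the initial step: asserting that the Lie algebra summand $\l$ is realized by a genuine quasireductive subgroup $L \sub G$, and verifying $H \cap L = \{1\}$ at the supergroup level rather than only infinitesimally.  In the algebraic supergroup category used throughout the paper, quasireductive Lie subalgebras integrate to subgroups, and $H \cap L$ is at worst a finite group scheme by the complementarity $\h \cap \l = 0$; I anticipate this is either trivial in the situations of interest or handled by a short component-group argument, and so does not obstruct the overall strategy.
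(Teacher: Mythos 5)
Your strategy is the same as the paper's: realize $\l$ as a subgroup $L\subseteq G$, form $K':=K\cdot L$, use Corollary~\ref{cor_splitting_subgroup}(1) to see $K'$ is splitting in $G$, and transport the splitting of $\C[G/K']$ across an $H$-equivariant identification of $G/K'$ with $H/K$. You correctly flag the sticking point --- that $\g=\h\rtimes\l$ gives $G=H\rtimes L$ at the group level only after adjustment, and that $H\cap L=\{1\}$ can fail even when $\h\cap\l=0$ --- but flagging it as ``either trivial or handled by a short component-group argument'' is not a proof, and this is precisely the step that carries the weight. The paper handles it by passing to a finite cover $\tilde G\to G$ chosen so that $\tilde G_0\cong H_0\times L_0$, at which point the Harish-Chandra pair formalism gives a genuine supergroup decomposition $\tilde G=\tilde H\rtimes L$; passing to and from this cover is licensed by Lemmas~\ref{lemma_splitting_normal} and~\ref{lemma_splitting_normal_red} (a finite kernel is in particular normal with reductive identity component). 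Without a reduction of this kind, your key claim $H\cap K'=K$ is not justified and can fail.

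A second, smaller gap: establishing that the $H$-equivariant map $H/K\to G/K'$ is a bijection on points is not sufficient to conclude it is an isomorphism of supervarieties, since nothing is then said about the behavior on odd directions or nilpotents. The paper instead observes that this map is a surjective \emph{closed embedding} between smooth supervarieties of the same dimension, which does force an isomorphism. Your write-up should replace the set-theoretic injectivity/surjectivity discussion with an argument of this form.
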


	\begin{proof}
			Applying Lemma \ref{lemma_splitting_normal}, we may pass to a finite cover of $G$ so that $G_0\cong H_0\times L_0$.  Then by the theory of Harish-Chandra pairs for supergroups, we obtain a decomposition $G=H\rtimes L$.  Since $\l$ stabilizes $\k$, $L$ must similarly stabilize $K$, and thus $K\rtimes L\sub G$ is a subgroup, and since it contains $K$ it is splitting.  Consider the natural $H$-equivariant map
			\[
			H/K\to (H\rtimes L)/(K\rtimes L).
			\]
			It is a surjective closed embedding, and since the supervarieties are smooth of the same dimension, it must be an isomorphism.  Since the trivial $G$-submodule splits off functions on the RHS as an $H$-module, it must do the same on the LHS, and we are done.
	\end{proof}

	\subsection{Geometric criterion}\label{section_odd_semisimple}
	Let $\g=\operatorname{Lie}G$.  An element $x\in\g_{\ol{1}}$ is called homological if $x^2:=\frac{1}{2}[x,x]$ is a semisimple element in $\g_{\ol{0}}$. For example, if $[x,x]=0$ then $x$ is homological.  We set
	\[
	\g_{\ol{1}}^{hom}:=\{x\in\g_{\ol{1}}: x \text{ is homological}\}.
	\]
	Observe that $\g_{\ol{1}}^{hom}$ is $G_0$-stable; however it need not be closed or open in $\g_{\ol{1}}$. 
	
	Let $x\in\g_{\ol{1}}^{hom}$.   Then if $M$ is a $G$-module, $[x,x]$ acts semisimply on $M$.  One can define a tensor functor
	$DS_x: \operatorname{Rep}G\to \operatorname{SVect}$ defined by:
	\[
	DS_xM:=\ker x/(xM\cap\ker x)=H(x,M^{[x,x]}).
	\]
	
	Assume that $\MM$ is a affine algebraic supervariety with a vector field $Q$ such that $Q^2$ acts semisimply on $\C[\MM]$.  We say that a point $p\in\MM_0$ is a zero of $Q$ if $Q(\C[\MM])\subset I_p$, where $I_p$ is the maximal ideal determined by $p$. Let $Z(Q)$ denote zero locus of $Q$, which is a Zariski-closed subset of $\MM_0$.
	
	\begin{lemma}\label{lem_orbits} If $Z(Q)=\emptyset$ then $DS_Q\C[\MM]=0$.
	\end{lemma}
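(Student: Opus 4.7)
The plan is to reduce the conclusion $DS_Q\C[\MM] = 0$ to producing an odd element $z \in \C[\MM]$ with $Qz = 1$.  Indeed, $DS_Q\C[\MM] = \ker Q/(Q\C[\MM] \cap \ker Q)$ inherits from $\C[\MM]$ the structure of a unital supercommutative algebra with unit $[1]$, and such a $z$ (which automatically satisfies $Q^2 z = Q(Qz) = Q(1) = 0$) exhibits $1 \in Q(\C[\MM]) \cap \ker Q$, forcing $[1] = 0$ and hence $DS_Q\C[\MM] = 0$.

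First, I translate $Z(Q) = \emptyset$ into ring theory.  A point $p \in \MM_0$ fails to be a zero of $Q$ exactly when $(Q\xi)(p) \neq 0$ for some odd $\xi$, since for an even $f$ the function $Qf$ is odd and so automatically lies in $I_p$.  Thus the ideal $J := (Q\xi : \xi \in \C[\MM]_{\ol{1}}) \sub \C[\MM]_{\ol{0}}$ cuts out the empty subscheme of $\MM_0$, and the weak Nullstellensatz applied to $\C[\MM_0] = \C[\MM]_{\ol{0}}/\n$ (with $\n$ the nilradical) gives $J + \n = \C[\MM]_{\ol{0}}$; since nilpotents lie in the Jacobson radical, this forces $J = \C[\MM]_{\ol{0}}$, yielding an identity
\[
1 \;=\; \sum_i c_i\, Q\xi_i, \qquad c_i \in \C[\MM]_{\ol{0}},\ \xi_i \in \C[\MM]_{\ol{1}}.
\]

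Now set $h_1 := \sum_i c_i \xi_i$.  The super-Leibniz rule gives $Q h_1 = 1 + \rho_1$ with $\rho_1 := \sum_i (Qc_i)\xi_i$ lying in $(\C[\MM]_{\ol{1}})^2 \sub \C[\MM]_{\ol{0}}$, whose elements are all nilpotent (a product of sufficiently many odd generators vanishes).  Since $Q^2$ is an even derivation preserving $\C[\MM]_{\ol{1}}$, it preserves $(\C[\MM]_{\ol{1}})^2$, and the semisimplicity of $Q^2$ therefore supplies a projector $P_0\colon \C[\MM] \to \C[\MM]^{Q^2=0}$ which commutes with $Q$ (as $[Q,Q^2]=0$) and preserves $(\C[\MM]_{\ol{1}})^2$.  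Put $h := P_0 h_1$, so that $Qh = 1 + \rho$ with $\rho := P_0 \rho_1$ nilpotent and $Q^2 h = 0$.  Hence $Qh$ is invertible, with inverse the finite sum $(Qh)^{-1} = \sum_{k\geq 0}(-\rho)^k \in \C[\MM]^{Q^2=0}$.  The crucial observation is that $Q\rho = Q(Qh - 1) = Q^2 h = 0$, which yields $Q\bigl((Qh)^{-1}\bigr) = -(Qh)^{-2} Q\rho = 0$.  The odd element $z := h(Qh)^{-1}$ then satisfies
\[
Qz \;=\; (Qh)(Qh)^{-1} - h\, Q\bigl((Qh)^{-1}\bigr) \;=\; 1,
\]
as required.

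The main obstacle I anticipate is the last step: the naive candidate $z := h_1/Qh_1$ would leave an error proportional to $Q^2 h_1$, which need not vanish.  The role of the semisimplicity assumption on $Q^2$ is precisely to furnish the projector $P_0$ so that $h := P_0 h_1$ satisfies $Q^2 h = 0$ and the error disappears; once this is in place, the first two steps amount to standard bookkeeping with the Nullstellensatz and the super-Leibniz rule.
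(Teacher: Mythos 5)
Your proof is correct, and it takes a genuinely different route to the same final trick. Both you and the paper reduce the lemma to producing an odd $g$ (your $z$) with $Qg=1$, and both reach this by first finding an element $h$ (the paper's $f$) satisfying $Qh = 1+\rho$ with $\rho$ nilpotent and $Q$-closed, then dividing by the unit $1+\rho$. Where you differ is the construction of $h$. You argue purely algebraically: the even ideal $J=(Q\xi:\xi\ \text{odd})$ of $\C[\MM]_{\ol 0}$ cuts out $Z(Q)=\emptyset$, so the Nullstellensatz gives $1=\sum c_iQ\xi_i$, and then you project $\sum c_i\xi_i$ to the $Q^2=0$ eigenspace using semisimplicity of $Q^2$ to make the error term $Q$-closed. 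The paper instead passes to the $\JJ(\MM)$-adic associated graded of $\C[\MM]$, interprets the degree $-1$ part of $Q$ as a section $\gamma$ of the normal bundle $\NN_{\MM_0}$, observes that $Z(Q)=\emptyset$ makes $\gamma$ nowhere vanishing, and uses affinity of $\MM_0$ to split the induced embedding of a trivial line bundle into $\NN_{\MM_0}$, which produces a $Q^2$-invariant $f$ with leading term $\gr Q(\gr f)=1$. Each proof invokes the same two hypotheses—affinity (Nullstellensatz for you, line bundle splitting for them) and $Q^2$-semisimplicity (your projector $P_0$, their $Q^2$-invariant lift)—but your route bypasses the associated graded and the normal-bundle geometry entirely, which makes the argument shorter and more self-contained, at the cost of the geometric picture the paper's proof supplies.
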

	\begin{proof} Let $\JJ(\MM)$ denote the ideal generated by odd functions on $\MM$ and consider the  $\JJ(\MM)$-adic filtration
		on $\C[\MM]$.
		Consider the corresponding graded $\C[\MM_0]$-module  $\operatorname{gr}\C[\MM]$.  Note that it has a compatible structure of a
		$Q^2$-module since $\mathcal J(\MM)$ is $Q^2$-stable. The operator $Q:\C[\MM]\to\C[\MM]$ induces a vector field $\operatorname{gr}Q$
		in $\operatorname{gr}\C[\MM]$ of degree $-1$.   In particular, $\operatorname{gr}Q$ defines a morphism $\JJ(\MM)/\JJ^2(\MM)\to \C[\MM]/\JJ(\MM)$, which defines a section $\gamma$ of the normal bundle $\mathcal N_{\MM_0}$.
		
		The condition $Z(Q)=\emptyset$ implies that $\gamma$ has no zeros on $\MM_0$.  
		Therefore  $\gamma$ defines an injective morphism $\gamma:\mathcal L\to \mathcal N_{\MM_0}$ of a trivial line bundle  $\mathcal L$. 
		The dual morphism $\mathcal N^*_{\MM_0}\to\mathcal L^*$ splits by affinity of $\MM_0$. This implies the existence
		of $Q^2$-invariant $f\in \JJ(\MM)$ such that $\gr Q(\gr f)=1$. 
		
		We have thus shown that
		\[
		Q(f)= 1 +\eta
		\]
		where $\eta\in\mathcal{J}(\MM)$ is even, nilpotent, and $Q$-invariant.  Thus if we set $g=f/(1+\eta)$ we see that $Q(g)=1$, and we are done.  
	\end{proof}
	
	\begin{thm}\label{thm_orbits} Let $G$ be a quasireducitve supergroup and $K\subset G$ be a splitting subgroup. Then for any homological $x\in\g_{\ol{1}}$ the orbit $G_0x$ has non-trivial intersection with $\k_{\ol{1}}$; equivalently, $G_0\cdot\k_{\ol{1}}^{hom}=\g_{\ol{1}}^{hom}$.
	\end{thm}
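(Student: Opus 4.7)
The plan is to recast the desired orbit statement in terms of vector fields on $\MM:=G/K$ so that Lemma \ref{lem_orbits} applies, and then exploit the splitting hypothesis to derive a contradiction via the Duflo--Serganova functor $DS_x$. For a homological $x\in\g_{\ol{1}}$, let $Q=\xi_x$ be the odd vector field on $\MM$ induced by the left action of $G$. A short computation with left translation identifies the value of $\xi_x$ at a point $hK_0\in(G/K)_0=G_0/K_0$ with the class of $\Ad(h^{-1})x$ in $\g/\k$; hence $hK_0\in Z(Q)$ if and only if $\Ad(h^{-1})x\in\k_{\ol{1}}$, yielding the equivalence
\[
Z(Q)=\emptyset \ \Longleftrightarrow\ G_0 x\cap\k_{\ol{1}}=\emptyset.
\]
It therefore suffices to show that $Z(Q)\neq\emptyset$ whenever $K$ is splitting.

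Suppose, for contradiction, that $Z(Q)=\emptyset$. Because $x$ is homological, $\tfrac{1}{2}[x,x]$ is a semisimple element of the reductive Lie algebra $\g_{\ol{0}}$, and since $\C[G/K]$ is a rational $G_0$-module (being the coordinate superalgebra of the affine $G$-variety $G/K$), the operator $Q^2$ acts semisimply on $\C[\MM]$. Lemma \ref{lem_orbits} then produces $DS_Q\C[\MM]=0$. But $x$ acts on $\C[G/K]$ precisely by the derivation $Q$, so $DS_Q\C[\MM]$ coincides with $DS_x$ applied to the $G$-module $\C[G/K]$. On the other hand, by Proposition \ref{splitting_subgroup_charac}, the splitting hypothesis gives a $G$-module decomposition $\C[G/K]\cong\C\oplus N$; applying the additive functor $DS_x$ and using that $DS_x\C=\C$ (because $x$ annihilates constants), we obtain a nonzero summand $\C\subseteq DS_x\C[G/K]$. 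This contradicts the vanishing just established, forcing $Z(Q)\neq\emptyset$ and completing the argument.

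The main obstacle is the technical setup rather than the overall logic: one must verify that the geometric $DS_Q$ used in Lemma \ref{lem_orbits} really agrees with the algebraic $DS_x$ on the infinite-dimensional coordinate ring $\C[G/K]$, and that the required semisimplicity of $Q^2$ follows from the homological hypothesis on $x$ via the rationality of the $G_0$-action on affine quotients. Once these bookkeeping points are in place the proof is essentially formal, trading the splitting property against the vanishing in Lemma \ref{lem_orbits}.
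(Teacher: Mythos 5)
Your proof is correct and follows essentially the same argument as the paper's: identify the odd vector field $Q$ on $G/K$ induced by $x$, observe that $G_0x\cap\k_{\ol{1}}=\emptyset$ forces $Z(Q)=\emptyset$, invoke Lemma \ref{lem_orbits} to get $DS_x\C[G/K]=0$, and contradict the fact that a splitting $\C[G/K]\cong\C\oplus N$ makes $\C\subset DS_x\C[G/K]$. The extra details you supply (the explicit identification of $Z(Q)$ with $\{hK_0: \Ad(h^{-1})x\in\k_{\ol{1}}\}$, and the rationality argument for semisimplicity of $Q^2$) are precisely the bookkeeping the paper leaves implicit.
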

	\begin{proof} Assume that there exists $x\in\g_{\ol{1}}$ such that $G_0x\cap \k=\emptyset$. Let $Q$ be the odd vector field on $G/K$ corresponding to $x$.
		Then the Lie algebra of the stabilizer of any $p\in (G/K)_0$ does not contain $x$ and hence $Z(Q)=\emptyset$. By Lemma \ref{lem_orbits} we have
		$DS_Q\C[G/K]=0$; but if $\C$ splits as a direct summand we must have $\C\subset DS_Q\C[G/K]$, giving a contradiction.
	\end{proof}

	\begin{cor}\label{cor_sdim_constraint}
	Suppose that $\g_{\ol{1}}^{hom}$ and $\k_{\ol{1}}^{hom}$ contain dense open subsets of $\g_{\ol{1}}$ and $\k_{\ol{1}}$, respectively.  Then if $K$ is splitting in $G$ we must have $\sdim G/K\geq0$.
\end{cor}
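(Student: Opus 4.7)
The plan is to translate Theorem \ref{thm_orbits} into a dimension inequality by studying the adjoint action map $G_0\times\k_{\ol{1}}\to\g_{\ol{1}}$. First, since $K$ is splitting in $G$, Theorem \ref{thm_orbits} gives $G_0\cdot\k_{\ol{1}}^{hom}=\g_{\ol{1}}^{hom}$. The density assumption ensures that $\g_{\ol{1}}^{hom}$ is dense in $\g_{\ol{1}}$, and hence the set $G_0\cdot\k_{\ol{1}}\supseteq G_0\cdot\k_{\ol{1}}^{hom}$ is also dense in $\g_{\ol{1}}$.

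Next I would consider the morphism of algebraic varieties $\mu\colon G_0\times\k_{\ol{1}}\to\g_{\ol{1}}$ defined by $(g,x)\mapsto\Ad(g)(x)$. Since $K_0$ preserves $\k_{\ol{1}}$ under its adjoint representation, $\mu$ is constant on the orbits of the $K_0$-action $k\cdot(g,x):=(gk^{-1},\Ad(k)(x))$ on the source, so it factors through the associated bundle $G_0\times^{K_0}\k_{\ol{1}}$, whose dimension is $\dim G_0-\dim K_0+\dim\k_{\ol{1}}$. The image of $\mu$ is constructible by Chevalley's theorem and dense in $\g_{\ol{1}}$, so it must have dimension $\dim\g_{\ol{1}}$. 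Therefore
\[
\dim\g_{\ol{1}}\leq\dim G_0-\dim K_0+\dim\k_{\ol{1}},
\]
which rearranges to
\[
\sdim G/K=(\dim G_0-\dim K_0)-(\dim\g_{\ol{1}}-\dim\k_{\ol{1}})\geq 0.
\]

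The conceptual heart of the argument is already packaged in Theorem \ref{thm_orbits}, so no serious obstacle remains: the density hypothesis is exactly what is needed to promote the equality of homological loci to the denseness of $G_0\cdot\k_{\ol{1}}$ in $\g_{\ol{1}}$, after which a standard dimension count for the image of the adjoint action map closes out the proof.
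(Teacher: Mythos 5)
Your proposal is correct and follows the same strategy as the paper: invoke Theorem \ref{thm_orbits} to obtain $G_0\cdot\k_{\ol{1}}^{hom}=\g_{\ol{1}}^{hom}$, deduce that the action map $G_0\times\k_{\ol{1}}\to\g_{\ol{1}}$ is dominant, and close with a dimension count yielding $\dim\g_{\ol{0}}-\dim\k_{\ol{0}}+\dim\k_{\ol{1}}\geq\dim\g_{\ol{1}}$. The only difference is cosmetic: where you bound the image dimension globally by factoring through the associated bundle $G_0\times^{K_0}\k_{\ol{1}}$ and invoking Chevalley, the paper argues via generic smoothness that the differential $(u,x')\mapsto[u,x]+x'$ is surjective at a generic point and then discards the $\k_{\ol{0}}$-directions, arriving at the same inequality.
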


\begin{proof}
	Since $K$ is splitting in $G$, by Theorem \ref{thm_orbits}, $G_0\cdot\k_{\ol{1}}^{hom}=\g_{\ol{1}}^{hom}$.  By our assumptions, this implies that the image of the map $G_0\times\k_{\ol{1}}\to\g_{\ol{1}}$ contains an open subset of $\g_{\ol{1}}$.  Since we work over $\C$, it follows that for some $(g,x)\in G_0\times \k_{\ol{1}}$, the map of tangent spaces 
	\[
	T_{(g,x)}(G_0\times\k_{\ol{1}})\to T_{g\cdot x}\g_{\ol{1}}
	\]
	is surjective.  We may act by $g^{-1}$ so that WLOG $g=e$.  Then we have
	\[
	T_{(e,x)}(G_0\times\k_{\ol{1}})=\g_0\times\k_{\ol{1}}\to \g_{\ol{1}},
	\]	
	given by
	\[
	(u,x)\mapsto [u,x]+x.
	\] 
	Since $\k_{\ol{0}}$ preserves $\k_{\ol{1}}$, if we choose a splitting of vector spaces $\g_{\ol{0}}=\k_{\ol{0}}\oplus\m$, we obtain that the restriction of the above map to:
	\[
	\m\times \k_{\ol{1}}\to \g_{\ol{1}}
	\]
	remains surjective.  This implies that
	\[
	\dim\g_{\ol{0}}-\dim\k_{\ol{0}}+\dim\k_{\ol{1}}\geq \dim\g_{\ol{1}}.
	\]
	From this we obtain $\sdim G/K\geq 0$.
\end{proof}

	\section{Defect subgroups}
	\subsection{Contragredient superalgebras}  We call a finite-dimensional Lie superalgebra $\g$ contragredient if $\g\simeq\g(A)$ is
	a finite-dimensional Kac-Moody superalgebra
	with symmetrizable Cartan matrix. Then $\g=\bigoplus_{i=1}^s\g_i$ is a direct sum of Kac-Moody superalgebras
	with indecomposable Cartan matrix. Each $\g_i$ is either simple (basic classical or exceptional) or isomorphic to $\g\l(n|n)$ for
	$n\geq 1$. We call $\g$ indecomposable if $s=1$. Furthermore, $\g$ satisfies the following properties:
	\begin{enumerate}
		\item $\g_{\ol{0}}$ is reductive and $\g_{\ol{1}}$ is a semisimple $\g_{\ol{0}}$-module;
		\item the centralizer of the Cartan subalgebra $\h$ in $\g$ coincides with $\h$;
		\item in the root decomposition 
		\[
		\g=\h\oplus\bigoplus_{\alpha\in\Delta}\g_{\alpha},
		\] 
		all roots spaces $\g_{\alpha}$ have dimension $1|0$ or $0|1$;
		\item  $\g$ admits a nondegenerate, invariant supersymmetric form $(-,-)$, which we fix.
	\end{enumerate}
	We say that $\alpha\in\Delta$ is either even or odd if the corresponding root space is of that parity. In this section we denote by $G$
	a connected
	quasireductive algebraic supergroup with Lie superalgebra $\g$.
	
	\begin{lemma}\label{centralizer} For any $h\in\h$ the centralizer $\g^h$ is the direct product of a contragredient Lie superalgebras and a
	subalgebra of $\h$.
\end{lemma}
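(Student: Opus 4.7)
My plan is to use the root decomposition of $\g$ to describe $\g^h$ explicitly, realize the ``rooted'' part as a contragredient subalgebra, and peel off the remaining Cartan directions as a central abelian summand.

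First, by one-dimensionality of the root spaces (property (3)) and $[h, x_\alpha] = \alpha(h) x_\alpha$, we immediately obtain
\[
\g^h = \h \oplus \bigoplus_{\alpha \in \Delta^h} \g_\alpha, \qquad \Delta^h := \{\alpha \in \Delta : \alpha(h) = 0\}.
\]
The subset $\Delta^h$ is closed and symmetric: if $\alpha \in \Delta^h$ then $-\alpha \in \Delta^h$, and $\alpha, \beta \in \Delta^h$ with $\alpha + \beta \in \Delta$ forces $\alpha + \beta \in \Delta^h$. Choosing a generic real functional on the $\R$-span of $\Delta$ which is nonzero on $\Delta \setminus \Delta^h$ polarizes $\Delta^h = \Delta^h_+ \sqcup \Delta^h_-$, and I would extract $\Pi^h = \{\beta_1, \ldots, \beta_k\}$ as the indecomposable elements of $\Delta^h_+$.

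Next, let $\nu : \h \to \h^*$ be the isomorphism induced by the nondegenerate invariant form from property (4), and set $h_{\beta_i} := \nu^{-1}(\beta_i)$, $\h' := \mathrm{span}(h_{\beta_1}, \ldots, h_{\beta_k})$. Pick root vectors $e_{\beta_i} \in \g_{\beta_i}$, $f_{\beta_i} \in \g_{-\beta_i}$ normalized so that $[e_{\beta_i}, f_{\beta_i}] = h_{\beta_i}$. A standard height induction shows the subalgebra they generate equals $\g' := \h' \oplus \bigoplus_{\alpha \in \Delta^h} \g_\alpha$. Set $A^h := (\beta_j(h_{\beta_i}))_{i,j}$. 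Since $(-,-)$ restricts nondegenerately to $\h'$ (using orthogonality $(\g_\alpha, \g_\beta) = 0$ unless $\alpha + \beta = 0$), the matrix $A^h$ is symmetrizable; the defining Chevalley--Serre-type relations of the Kac--Moody superalgebra $\g(A^h)$ are inherited from those in $\g$, and matching of root-space dimensions identifies $\g' \cong \g(A^h)$ as a (possibly decomposable) contragredient Lie superalgebra.

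Finally, let $\h'' := \{h' \in \h : \alpha(h') = 0 \text{ for all } \alpha \in \Delta^h\}$. Every element of $\h''$ commutes with $\h$ and with each $\g_\alpha$, $\alpha \in \Delta^h$, so $\h''$ lies in the center of $\g^h$. Under the identification $\alpha(h) = (\nu^{-1}(\alpha), h)$, $\h''$ is the $(-,-)$-orthogonal complement of $\h'$ in $\h$; nondegeneracy of $(-,-)$ on $\h'$ then yields $\h = \h' \oplus \h''$ and hence $\g^h = \g' \oplus \h''$, as required. The main obstacle is justifying the identification $\g' \cong \g(A^h)$: one must check that $A^h$ is a bona fide symmetrizable Cartan matrix of a finite-dimensional Kac--Moody superalgebra, that no unexpected relations hold among the $e_{\beta_i}, f_{\beta_i}$, and that all of $\bigoplus_{\alpha \in \Delta^h} \g_\alpha$ is captured by iterated brackets of the simple generators; the key tools are height induction on $\Delta^h_+$ and the inheritance of the invariant form.
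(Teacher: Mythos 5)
Your approach is broadly the same in outline as the paper's — use the root decomposition of $\g^h$, identify a simple system for the subsystem $\Delta^h$, realize the rooted part as a contragredient subalgebra, and split off a central subspace of $\h$ — but the proposal leaves open exactly the step that the paper's proof is designed to close. You flag this yourself: you need $\Pi^h$ to be linearly independent (so that $A^h$ is a genuine Cartan matrix) and you need the generated subalgebra to exhaust $\bigoplus_{\alpha\in\Delta^h}\g_\alpha$. A generic polarizing functional does not obviously deliver linear independence of the indecomposable positive roots of a closed subsystem of a super root system, and this is not a formality in the super setting. The paper sidesteps this entirely with a specific choice of polarizer: after rescaling so that $\alpha(h)\in\Q$ for all roots, it sets $h' = h + \tfrac{\varepsilon}{2}h_0$ with $\varepsilon$ the minimal positive value of $\alpha(h)$ and $h_0$ a small regular element. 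This $h'$ simultaneously refines $h$-positivity and defines a positive system $\Delta^+$ for all of $\g$, with the consequence that the base $B'$ of $\Delta^h$ satisfies $B' = B\cap\Delta^h$, i.e. is a \emph{subset} of the simple roots $B$ of $\g$. Linear independence then comes for free, since $B$ is linearly independent by definition of a Kac--Moody superalgebra. Moreover, the paper is careful about one more point you do not address: after constructing the contragredient datum, $\g'$ is a priori only a quotient of the Kac--Moody superalgebra $\g(A')$ by a central ideal $\c$; that $\c=0$ is argued from nondegeneracy of the restricted invariant form. Your identification $\g'\cong\g(A^h)$ needs the analogous argument. Your treatment of the complementary central part ($\h''$ as the orthogonal complement of $\h'$) is fine and matches the paper's $\a\subset Z(\g^h)$, but to make the proposal complete you should replace the generic polarization by the adapted one $h'=h+\tfrac{\varepsilon}{2}h_0$ (or otherwise supply a proof of linear independence) and add the no-central-kernel argument.
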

\begin{proof} Without loss of generality we may assume that $\alpha(h)\in\mathbb Q$ for all roots $\alpha$.
	We have the decomposition
	$$\g^h=\h\oplus\bigoplus_{\alpha\in\Delta(\g^h)}\g_{\alpha},\quad \Delta(\g^h)=\{\alpha\in\Delta\mid\alpha(h)=0\}.$$
	Let $\varepsilon$ denote the minimum of $\alpha(h)$ for all roots $\alpha$ such that $\alpha(h)>0$. Let $h_0\in\h$ be some
	regular element such that $|\alpha(h_0)|<1$ for $\alpha\in\Delta$.  Set $h'=h+\frac{\varepsilon}{2} h_0$ and let $\Delta^+$ be the set
	of positive roots defined by $h'$. If $B$ is the corresponding set of simple roots, then $B'=B\cap\Delta(\g^h)$ is the base
	of $\g^h$. By construction $B'$ is linearly independent. That implies $\g^h=\a\oplus \g'$ where $\a\subset Z(\g^h)$ is the subspace 
	complementary to $[\g^h,\g^h]\cap Z(\g^h)$. Then $\g'$ is a quotient of the Kac-Moody superalgebra by
	some central ideal $\c$. Since the restriction of the invariant form on $\g'$ is nondegenerate, $\c=0$.
	
\end{proof}

	\subsection{Defect subgroups}\label{section_def_subgroups}
	For an odd root $\alpha\in\Delta$, we say that $\alpha$ is isotropic if $(\alpha,\alpha)=0$, and otherwise we say $\alpha$ is non-isotropic. 
	
	The defect of $\g$ is the maximal number $d$ of mutually orthogonal linearly independent isotropic roots of $\g$.          
	
	For an odd isotropic root $\alpha$, choose non-zero elements $x\in\g_{\alpha}$ and $y\in\g_{-\alpha}$ so that $h:=[x,y]\neq 0$.  Then we obtain a subalgebra
	\[
	\s=\C\langle h,x,y\rangle\cong\s\l(1|1).
	\]
	We write $S$ for the corresponding subgroup of $G$; note that either $S\cong SL(1|1)$, or if $\alpha$ lies in a factor of $\g$ isomorphic to $\operatorname{Lie}D(2,1;a)$, then for irrational $a$, $h$ generates a two-dimensional torus in $G$.

	\begin{definition}
		A quasireductive subgroup $D\subset G$  is called a defect subgroup if its Lie superalgebra $\mathfrak d$ satisfies the following:
		\begin{enumerate}
			\item the $G_{0}$-orbit of any homological element of $\g_{\ol{1}}$ has a non-trivial intersection with $\mathfrak d_{\ol{1}}$;
			\item $\dim \mathfrak d_{\ol{1}}=2d$;
			\item $[\mathfrak d_{\ol{1}},\mathfrak d_{\ol{1}}]=\mathfrak d_{\ol 0}$, $[\mathfrak d_{\ol{1}},\mathfrak d_{\ol{0}}]=0$.
		\end{enumerate}
	\end{definition}
	\begin{prop}\label{prop_defect_emb} Let $\{\alpha_1,\dots,\alpha_d\}$ be a maximal set of mutually orthogonal linearly independent isotropic roots	in $\Delta$. Then the supergoup $D$ with Lie superalgebra $\mathfrak d$ generated by $\{\g_{\pm\alpha_i}\mid i=1,\dots d\}$ is a defect subgroup, and any defect subgroup is conjugate to $D$.
	\end{prop}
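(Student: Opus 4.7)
The plan is to split the proof in two: first verify that the constructed $D$ satisfies the three defining axioms of a defect subgroup, then establish conjugacy.

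Conditions (2) and (3) reduce to a root-space computation. I would choose nonzero $x_i\in\g_{\alpha_i}$, $y_i\in\g_{-\alpha_i}$ and set $h_i=[x_i,y_i]\in\h$; under the identification $\h\cong\h^*$ furnished by the invariant form, $\alpha_j(h_i)$ is proportional to $(\alpha_j,\alpha_i)$, which vanishes for all $i,j$ by isotropy and mutual orthogonality, giving $[\mathfrak d_{\ol{0}},\mathfrak d_{\ol{1}}]=0$. A standard $\alpha$-string argument for isotropic roots in contragredient Lie superalgebras shows $\alpha_i\pm\alpha_j$ is not a root for $i\neq j$, hence $[\g_{\pm\alpha_i},\g_{\pm\alpha_j}]=0$; thus $\mathfrak d_{\ol{1}}=\bigoplus_{i=1}^d(\g_{\alpha_i}\oplus\g_{-\alpha_i})$ has dimension $2d$ and $\mathfrak d_{\ol{0}}=\newspan\{h_1,\dots,h_d\}$, confirming (2) and (3).

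Condition (1)---that every $G_0$-orbit on $\g_{\ol{1}}^{hom}$ meets $\mathfrak d_{\ol{1}}$---is the heart of the matter, and I would prove it by induction on $\dim\g$ using Lemma~\ref{centralizer}. Given a homological $x$, the element $x^2=\tfrac{1}{2}[x,x]$ is semisimple, so we may $G_0$-conjugate it into $\h$, obtaining $x\in\g^{x^2}$. If $x^2\neq 0$ then $\g^{x^2}$ is a proper direct sum of contragredient superalgebras plus a central toral summand, and the inductive hypothesis applied to each factor moves $x$ into the corresponding product of defect subalgebras, which lies in a $G_0$-conjugate of $\mathfrak d$ once the $\alpha_i$ are chosen compatibly with the centralizer decomposition. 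If $x^2=0$, then $x$ is self-commuting, and one invokes the classical result (due essentially to Duflo--Serganova) that every self-commuting odd element in a contragredient Lie superalgebra is $G_0$-conjugate into $\sum_{i=1}^d\g_{\alpha_i}\subset\mathfrak d_{\ol{1}}$.

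For the conjugacy claim, let $D'$ be another defect subgroup with Lie algebra $\mathfrak d'$. By condition (3) and super-Jacobi, $\mathfrak d'_{\ol{0}}$ is abelian, and by quasireductivity of $D'$ together with the homological nature of every $y\in\mathfrak d'_{\ol{1}}$, each element of $\mathfrak d'_{\ol{0}}$ is semisimple. After a $G_0$-conjugation we may therefore assume $\mathfrak d'_{\ol{0}}\subset\h$. Pick $y\in\mathfrak d'_{\ol{1}}$ with $y^2$ regular inside $\mathfrak d'_{\ol{0}}$; applying (1) for $D$, conjugate further so $y\in\mathfrak d_{\ol{1}}$. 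Then $y^2\in\mathfrak d_{\ol{0}}\cap\mathfrak d'_{\ol{0}}$, so both $\mathfrak d$ and $\mathfrak d'$ sit inside $\g^{y^2}$; the maximality of $d$ combined with condition (2) forces the isotropic roots of $\g^{y^2}$ to be exactly $\{\pm\alpha_i\}$, pinning down $\mathfrak d'_{\ol{1}}=\mathfrak d_{\ol{1}}$ after a final conjugation inside the centralizer of $y^2$ in $G_0$. The main obstacle is condition (1), specifically the self-commuting case, where reducing an arbitrary $x$ with $[x,x]=0$ into $\sum_i\g_{\alpha_i}$ requires genuine structural input; conditions (2)--(3) are bookkeeping, and the conjugacy reduction flows cleanly once (1) is in hand.
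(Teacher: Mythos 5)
Your overall strategy tracks the paper's quite closely: reduce property (1) to the centralizer of $[x,x]$ via Lemma~\ref{centralizer}, invoke the description of the self-commuting cone for the $[x,x]=0$ case, and for conjugacy move a generic homological element of $\mathfrak d'$ into $\mathfrak d$ and then analyze the centralizer. Conditions (2) and (3) are indeed routine, and your computation for them is fine.

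There is, however, a genuine gap in your inductive step for condition (1). You write ``if $x^2\neq 0$ then $\g^{x^2}$ is a proper direct sum of contragredient superalgebras plus a central toral summand,'' but this fails when $x^2$ is a \emph{nonzero central} element of $\g$, in which case $\g^{x^2}=\g$ and the induction makes no progress. For simple basic classical $\g$ this cannot occur (the center is trivial), but for $\g=\mathfrak{gl}(n|n)$ it can and does. The paper's proof makes exactly this reduction precise: after passing to the centralizer it may assume $h=[x,x]$ is central and $\g$ indecomposable, so either $h=0$ (handled by \cite{GHSS}) or $\g=\mathfrak{gl}(n|n)$ and $h\neq 0$, which it resolves by a direct computation showing $x$ is $G_0$-conjugate to a multiple of $\bigl(\begin{smallmatrix}0&1_n\\1_n&0\end{smallmatrix}\bigr)$. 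Your proof needs this base case; without it, the induction is incomplete for $\mathfrak{gl}(n|n)$ and for any $\g$ containing such a summand.

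On the conjugacy claim, your outline is roughly parallel to the paper's, but the crucial assertion that ``the maximality of $d$ combined with condition (2) forces the isotropic roots of $\g^{y^2}$ to be exactly $\{\pm\alpha_i\}$'' is asserted rather than proved, and as stated it is not justified by maximality alone. What the paper actually uses is that for a \emph{generic} $h\in\mathfrak d_{\ol{0}}$ one has $\g^h\cong \mathfrak d\oplus\mathfrak l$ with $\mathfrak l_{\ol{1}}$ containing no nonzero homological elements, a fact it obtains by direct inspection of the root systems of the basic classical and exceptional superalgebras. Moreover, the paper chooses $x\in\mathfrak d_{\ol{1}}$ (not $\mathfrak d'_{\ol{1}}$) so that the minimal torus containing $[x,x]$ has Lie algebra $\mathfrak d_{\ol{0}}$, and then conjugates $\mathfrak d'$ to contain $x$; this ordering guarantees genericity of $[x,x]$ in $\mathfrak d_{\ol{0}}$. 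Your version, which picks $y\in\mathfrak d'_{\ol{1}}$ generic with respect to $\mathfrak d'_{\ol{0}}$ and then conjugates it into $\mathfrak d_{\ol{1}}$, does not obviously ensure that $y^2$ is generic in $\mathfrak d_{\ol{0}}$, so you would still owe an argument there. You should replace your unjustified claim about the isotropic roots of $\g^{y^2}$ with the centralizer decomposition $\g^{h}\cong\mathfrak d\oplus\mathfrak l$ together with the (case-checked) absence of homological elements in $\mathfrak l_{\ol{1}}$.
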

	\begin{proof} Let us prove the first assertion. Let $x$ be a homological odd element such that
		$[x,x]=h$. Without loss of generality we may assume that $h$ lies in the fixed Cartan subalgebra $\h$ of $\g$. Note that
		$x\in\g^h$ and that $\g^h$
		is a direct  sum of a contragredient superalgebra and a purely even center by Lemma \ref{centralizer}.
		Thus, if $h$ is not central, then we can reduce the statement to the contragredient superalgebra of
		smaller dimension and central $h$. Hence we can assume that $h$ is central and $\g$ is indecomposable.
		If the center of $\g$ is trivial, then $[x,x]=0$ and the statement follows from the description of the self-commuting cone in \cite{GHSS}. If $h\neq0$ then we must have 	$\g=\g\l(n|n)$, and one can check by direct computation that
		$x$ is conjugate to the matrix proportional to $\left(\begin{matrix}0&1_n\\ 1_n& 0\end{matrix}\right)$ which immediately
		implies the statement.

		Now we deal with the second assertion. 
		It is possible to choose $x_i\in\g_{\alpha_i}, y_i\in \g_{-\alpha_i}$ such that if we set $x=\sum_{i=1}^d (x_i+y_i)$
		then the Lie algebra of the minimal torus
		containing $h=[x,x]$ coincides with $\mathfrak d_{\ol{0}}$. Since $x$ is homological,
		using the action of $G_0$
		we can assume without loss of generality
		that the Lie superalgebra $\mathfrak d'$ of a defect subgroup $D'$ contains $x$ and hence $\mathfrak d_{\ol{0}}$. 
		Furthermore, 
		by direct inspection of root systems of all exceptional and basic classical superalgebras we learn that
		the centralizer of any generic element of $\mathfrak d_{\ol{0}}$
		is isomorphic to
		$\mathfrak d\oplus \l$ where $\l$ is a reductive Lie superalgebra. Therefore $\l_{\ol{1}}$ does not contain non-zero homological
		elements. Hence 
		$\mathfrak d'_{\ol{1}}\subset\mathfrak d_{\ol{1}}$ and by dimension count  $\mathfrak d=\mathfrak d'$.
	\end{proof}
	
	The main theorem of this text is:
	\begin{thm}\label{thm_S_is_splitting} Let $G$ be a quasireductive supergroup with contragredient Lie superalgebra $\g$ such that every
		direct summand of $\g$ has defect $\leq 1$ or is isomorphic to $\mathfrak{sl}(m|n)$ with $m\neq n$, or $\mathfrak{gl}(n|n)$.
		Then a defect subgroup is a splitting subgroup.  
	\end{thm}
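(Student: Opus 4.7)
The plan is to reduce to the indecomposable case and then handle the three families of summands separately, invoking the volume computation Theorem \ref{theorem_intro_volume_gl} for the $\g\l$-type cases and an algebraic symmetric-space argument for the defect one cases. First I would observe that if $\g = \g_1 \oplus \cdots \oplus \g_s$ is the decomposition into indecomposable contragredient summands, then up to a finite cover $G \cong G_1 \times \cdots \times G_s$ and a defect subgroup $D$ is conjugate to a product $D_1 \times \cdots \times D_s$. Since $\C[G/D] \cong \bigotimes_i \C[G_i/D_i]$ as $G$-modules, the trivial summand splits off the left-hand side as soon as it splits off each tensor factor, so the theorem reduces to the indecomposable case. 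Indecomposable defect zero summands are isomorphic to $\o\s\p(1|2n)$, whose representation category is already semisimple, so there is nothing to prove.

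For indecomposable $\g$ of type $\g\l(n|n)$ or $\s\l(m|n)$ with $m\neq n$, I would use the chain-of-Levis strategy outlined in the introduction. By Theorem \ref{theorem_intro_volume_gl} together with the unitary trick, the block subgroup $GL(r|s)\times GL(m-r|n-s) \subset GL(m|n)$ is splitting whenever $(r-s)\bigl((m-r)-(n-s)\bigr)\geq 0$. Choosing a chain of Levi subgroups that successively peels off $(1|1)$-dimensional blocks, at each step the nonnegativity condition is automatic (with $r=s=1$ the expression vanishes), so by transitivity (Corollary \ref{cor_splitting_subgroup}) the diagonal $GL(1|1)^{\min(m,n)} \subset GL(m|n)$ is splitting. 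The $\s\l$ case is reduced to $\g\l$ via Lemma \ref{lemma_splitting_normal_red} applied to the determinant, and one extracts the defect subgroup $SL(1|1)^{\min(m,n)}$ from $GL(1|1)^{\min(m,n)}$ by a final application of Lemma \ref{lem_split_quotient}, the odd parts coinciding.

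For indecomposable $\g$ of defect one, I would argue algebraically by exhibiting a quasireductive subgroup $K \subset G$ that contains the defect subgroup $S \cong SL(1|1)$ and such that $(G,K)$ forms a symmetric superpair. For each defect one family (the orthosymplectic series $\o\s\p(m|2)$ and $\o\s\p(2|2n)$, the $\s\l(m|1)$ series, and the exceptionals $D(2,1;\alpha)$, $G(3)$, $F(4)$) one has a natural involution of $\g$ whose fixed subalgebra $\k$ contains $\s$. The homogeneous superspace $G/K$ is supersymmetric, and on such a space one can construct a $G$-equivariant projection $\C[G/K] \to \C$ onto the trivial isotypic component via spherical function theory, which shows $K$ is splitting in $G$. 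One then shows $S$ is splitting in $K$, which is straightforward since either $\k_{\ol 1} = \s_{\ol 1}$ (so Lemma \ref{lem_split_quotient} applies directly) or the discrepancy is absorbed into an even reductive factor that can be handled by Lemmas \ref{lemma_splitting_normal_red} and \ref{lemma_semidirect_prod}. Composing via Corollary \ref{cor_splitting_subgroup} yields the theorem in the defect one case.

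The main obstacle I expect is the defect one step: uniformly identifying the symmetric subgroup $K$ and verifying the explicit splitting of $\C[G/K]$ across the several exceptional and classical families. Unlike the $GL$ cases where localization and the unitary trick provide automatic nonvanishing of an invariant integral, here one must exhibit the $G$-equivariant projection directly from the algebraic structure of $G/K$; this case-by-case construction, which must be checked against the criterion of Theorem \ref{thm_orbits} in each instance, is the substantive algebraic content underlying the proof.
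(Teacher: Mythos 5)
Your proposal follows the same two-pronged strategy as the paper: reduce to the indecomposable case by the product decomposition, invoke the volume computation with the unitary trick for the $\g\l$-type summands, and use symmetric superpairs for the defect one summands. The $\g\l$ half is essentially correct, modulo a misattributed lemma: to pass from $GL(1|1)^d \subset GL(m|n)$ splitting to $SL(1|1)^d \subset SL(m|n)$ splitting one uses Lemma~\ref{lemma_semidirect_prod} (split off the central torus as a semidirect factor), not Lemma~\ref{lemma_splitting_normal_red}, which runs in the opposite direction (it descends a splitting statement \emph{from} a quotient).

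The genuine gap is in the defect one step. You assert that for a symmetric superpair $(G,K)$ "one can construct a $G$-equivariant projection $\C[G/K]\to\C$ onto the trivial isotypic component via spherical function theory." But the existence of such a projection is precisely the claim that $K$ is splitting, so this is circular unless you specify the mechanism. The paper's actual argument is more delicate: for the pairs it uses (Iwasawa-compatible Borel, one-dimensional $\b$-eigenspaces) it shows via Lemma~\ref{positivity} that the Casimir acts with a strictly positive eigenvalue on every nonzero spherical highest weight, forcing all non-trivial spherical constituents out of the principal block; and this requires checking the positivity of $\rho$ in each restricted root system case by case. Moreover the Casimir argument simply fails for $D(2,1;\alpha)$, where the paper substitutes an analysis of the extension graph of the principal block (Section~\ref{section_d(2,1;alpha)}), and the odd orthosymplectic chain is outsourced to \cite{Sh1}. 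Finally, the "criterion" you say one checks each case against is Theorem~\ref{thm_orbits}, but that is a \emph{necessary} condition (an obstruction), not the sufficiency mechanism -- you cannot use it to establish splitting. So the structure of your argument is the right one, but the substantive content of the defect one step -- precisely the part you flag as "the main obstacle" -- is left as a black box where it cannot be one.
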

	\begin{proof} If $K_1$ and $K_2$ are splitting in $G_1$ and $G_2$ respectively, then
		$K_1\times K_2$ is splitting in $G_1\times G_2$. Therefore it suffices to prove the statement for all simple Lie algebras of defect $1$ and for	$GL(m|n)$. The former is done in Section 4 and the latter in Section 6.
	\end{proof}

	\begin{cor}
		Let $S\sub GL(n|n)$ be a defect subgroup.  Then
		\begin{enumerate}
			\item $S$ lies in $SL(n|n)$ and is splitting;
			\item the image of $S$ in $PGL(n|n)$ and $PSL(n|n)$ is splitting.
		\end{enumerate}
	\end{cor}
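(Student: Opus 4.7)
The plan has three parts. To prove (1), I would first verify that $S\sub SL(n|n)$ by computing Lie algebras. By Proposition \ref{prop_defect_emb}, after conjugation we may take the Lie superalgebra $\mathfrak d$ of $S$ to be generated by the root spaces $\g_{\pm(\epsilon_i-\delta_i)}$ for $i=1,\dots,n$, and a direct matrix computation yields $[x_i,y_i]=E_{ii}+E_{n+i,n+i}$, which is supertraceless. Hence $\mathfrak d\sub\mathfrak{sl}(n|n)$, and connectedness of $S$ then forces $S\sub SL(n|n)$.

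The core step is to upgrade the splitting of $S\sub GL(n|n)$ furnished by Theorem \ref{thm_S_is_splitting} to a splitting of $S\sub SL(n|n)$. I would exploit the Berezinian sequence: $\operatorname{Ber}\colon GL(n|n)\onto\mathbb{G}_m$ has kernel $SL(n|n)$ and admits the section $\sigma(t)=\operatorname{diag}(t,1,\dots,1|1,\dots,1)$. A short check shows that $\sigma(t)$ normalizes $\mathfrak d$ (it rescales the two generators attached to the first isotropic root and fixes the others), so the map $(g,t)\mapsto g\,\sigma(t)\,S$ yields an $SL(n|n)$-equivariant isomorphism of supervarieties
\[
SL(n|n)/S\times\mathbb{G}_m\xrightarrow{\sim}GL(n|n)/S,
\]
with $SL(n|n)$ acting only on the first factor. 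Passing to function algebras then gives $\C[GL(n|n)/S]\cong\C[SL(n|n)/S]\otimes\C[\mathbb{G}_m]$ as $SL(n|n)$-modules, and composing the inclusion $f\mapsto f\otimes 1$ with any $GL(n|n)$-equivariant splitting $\pi\colon\C[GL(n|n)/S]\to\C$ produces the required $SL(n|n)$-equivariant splitting.

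Part (2) follows immediately from Lemma \ref{lemma_splitting_normal}. For $PGL(n|n)=GL(n|n)/Z$ with $Z=Z(GL(n|n))$, since $S\sub GL(n|n)$ is splitting, the image $S/(S\cap Z)$ is splitting in $PGL(n|n)$. Similarly for $PSL(n|n)=SL(n|n)/Z'$ with $Z'=Z(SL(n|n))$, applying the lemma to the splitting inclusion $S\sub SL(n|n)$ from (1) gives the result. The only delicate point in the argument is verifying that the Berezinian-section construction is compatible with both the left $SL(n|n)$-action and the right $S$-action; once this is confirmed, the rest is formal bookkeeping with the lemmas of Section 2, and I do not anticipate further obstacles.
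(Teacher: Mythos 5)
Your proof is correct and follows essentially the same route as the paper: for part (1), the paper cites Lemma \ref{lemma_semidirect_prod}, and your explicit construction of the Berezinian section $\sigma$, the verification that it normalizes $\mathfrak d$, and the resulting $SL(n|n)$-equivariant isomorphism $SL(n|n)/S\times\mathbb{G}_m\xrightarrow{\sim}GL(n|n)/S$ is precisely the content of that lemma specialized to $\mathfrak{gl}(n|n)=\mathfrak{sl}(n|n)\rtimes\C E_{11}$; for part (2) you apply Lemma \ref{lemma_splitting_normal} exactly as the paper does. The only cosmetic difference is that you re-derive the semidirect-product mechanism from scratch rather than invoking the general lemma, which makes the argument self-contained and also transparently sidesteps the ``finite cover with $G_0\cong H_0\times L_0$'' step in the lemma's proof.
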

	\begin{proof}
		The first statement follows from Lemma \ref{lemma_semidirect_prod}, and the second from Lemma \ref{lemma_splitting_normal}.
	\end{proof}

	For $Q(n)$ we have:
	
	\begin{thm}\label{thm_Q_splitting}
		The subgroups
		\[
		Q(2)^{n}\sub Q(2n), \ \ \ \ Q(2)^{n}\times Q(1)\sub Q(2n+1),
		\]
		are splitting.	
	\end{thm}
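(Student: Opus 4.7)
The plan is to use Theorem \ref{theorem_intro_volume_q} (the nonvanishing criterion for volumes of $QGr(r,n)$) together with the transitivity of splitting subgroups (Corollary \ref{cor_splitting_subgroup}), and then run a short induction on $n$.

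First I would extract the following ``two-block'' splitting corollary from Theorem \ref{theorem_intro_volume_q}. The block-diagonal embedding $UQ(r)\times UQ(n-r)\subset UQ(n)$ has as its quotient the compact supermanifold $QGr(r,n)$, and the complexification of this inclusion is $Q(r)\times Q(n-r)\subset Q(n)$. If $\omega$ is an invariant volume form with $\int_{QGr(r,n)}\omega\neq 0$, then the $UQ(n)$-equivariant integration map $\int\colon\C[QGr(r,n)]\to\C$ provides a splitting of the inclusion of constants; passing to complex forms upgrades this to a $Q(n)$-equivariant splitting of $\C\hookrightarrow \C[Q(n)/(Q(r)\times Q(n-r))]$. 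By Theorem \ref{theorem_intro_volume_q} this happens precisely when $r(n-r)$ is even. In particular, since $2(n-2)$ and $2(2n-1)$ are always even, both $Q(2)\times Q(n-2)\subset Q(n)$ and $Q(2)\times Q(2n-1)\subset Q(2n+1)$ are splitting.

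Second, I would run an induction on $n$. For $Q(2n)$, the base case $Q(2)\subset Q(2)$ is trivial. Assuming $Q(2)^{n-1}\subset Q(2n-2)$ is splitting, note that $(G_1\times G_2)/(K_1\times G_2)\cong G_1/K_1$ as $G_1\times G_2$-modules (where $G_2$ acts trivially), so the splitting of $\C$ in $\C[G_1/K_1]$ as $G_1$-modules is automatically $G_1\times G_2$-equivariant; applying this with $K_1=Q(2)^{n-1}$, $G_1=Q(2n-2)$, $G_2=Q(2)$ yields that $Q(2)\times Q(2)^{n-1}\subset Q(2)\times Q(2n-2)$ is splitting. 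Composing with the splitting $Q(2)\times Q(2n-2)\subset Q(2n)$ via Corollary \ref{cor_splitting_subgroup}(2) completes the inductive step. The case of $Q(2n+1)$ is identical, with inductive step $Q(2)\times (Q(2)^{n-1}\times Q(1))\subset Q(2)\times Q(2n-1)\subset Q(2n+1)$ and base case $Q(1)\subset Q(1)$.

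The hard part of the argument is entirely concentrated in Theorem \ref{theorem_intro_volume_q}, whose proof relies on the localization formula of \cite{SZ} applied at the isolated, nondegenerate zeroes of a generic odd element $Q\in(\Lie UQ(n))_{\ol 1}$ together with an explicit evaluation of the local contributions $\alpha(T_p\MM,\omega_p)$. Once this nonvanishing is known, the present theorem reduces to formal transitivity; I do not anticipate additional obstacles at this stage.
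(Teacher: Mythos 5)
Your proof is correct and takes essentially the same route as the paper's (Corollary \ref{defect-splitting}(b)): apply the unitary trick to the nonvanishing volume of $QGr(r,n)$ to get that $Q(r)\times Q(n-r)\subset Q(n)$ is splitting precisely when $r(n-r)$ is even (Theorem \ref{main_splitting}(b)), then iterate via transitivity (Corollary \ref{cor_splitting_subgroup}(2)). You merely spell out the induction and the product-with-a-common-factor step that the paper leaves implicit (a slightly more general product lemma already appears in the proof of Theorem \ref{thm_S_is_splitting}).
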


\begin{proof}
	Given in Section 6.
\end{proof}
	
	\begin{cor}
		Let $K\sub Q(n)$ be the splitting subgroup given in Theorem \ref{thm_Q_splitting}.  Then we have:
		\begin{enumerate}
			\item $K\cap SQ(n)$ is splitting in $SQ(n)$;
			\item the image of $K$ in $PQ(n)$ and $K\cap SQ(n)$ in $PSQ(n)$ are splitting.
		\end{enumerate}
	\end{cor}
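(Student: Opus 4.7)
The plan is to model the proof on that of the preceding corollary for $GL(n|n)$.  For part (1), the key step is to show that the natural map of quotient supervarieties
\[
SQ(n)/(K\cap SQ(n))\longrightarrow Q(n)/K
\]
is an isomorphism; granted this, the $Q(n)$-equivariant splitting of the constant functions in $\C[Q(n)/K]$ afforded by Theorem \ref{thm_Q_splitting} restricts along $SQ(n)\hookrightarrow Q(n)$ to an $SQ(n)$-equivariant splitting in $\C[SQ(n)/(K\cap SQ(n))]$, which is precisely the content of $K\cap SQ(n)\sub SQ(n)$ being splitting.

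To establish the displayed isomorphism, I would first check that $K$ contains odd elements of nonzero odd trace: each $Q(2)$-factor of $K$ contributes one (take the element with odd block $B=I_2$), and in the odd-rank case the additional $Q(1)$-factor does too.  Since $SQ(n)$ and $Q(n)$ share the same underlying reductive group $GL(n)$, the induced map on even parts is the identity, and the relation $\mathfrak{k}+\mathfrak{sq}(n)=\mathfrak{q}(n)$ forces $\sdim(SQ(n)/(K\cap SQ(n)))=\sdim(Q(n)/K)$.  As the map is then a closed embedding of smooth supervarieties of equal super-dimension whose underlying map is bijective, it is an isomorphism; this is the same geometric mechanism used in the proof of Lemma \ref{lemma_semidirect_prod}.

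For part (2), I would invoke Lemma \ref{lemma_splitting_normal} twice.  Let $Z=Z(Q(n))$ be the center of $Q(n)$, a closed normal subsupergroup.  Applied with $N=Z$ to the splitting subgroup $K\sub Q(n)$, the lemma gives that the image $K/(K\cap Z)$ is splitting in $Q(n)/Z=PQ(n)$.  Applied with $N=Z\cap SQ(n)$ to the splitting subgroup $K\cap SQ(n)\sub SQ(n)$ coming from part (1), the same lemma yields that the image of $K\cap SQ(n)$ in $SQ(n)/(Z\cap SQ(n))=PSQ(n)$ is splitting.

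The main subtlety, relative to the $GL(n|n)$ case, is that Lemma \ref{lemma_semidirect_prod} cannot be cited verbatim for part (1): the only super-vector-space complement to $\mathfrak{sq}(n)$ in $\mathfrak{q}(n)$ is spanned by an odd scalar $z$ with $[z,z]=2\operatorname{id}\neq 0$, so there is no Lie sub-superalgebra complement and hence no semidirect-product decomposition $\mathfrak{q}(n)=\mathfrak{sq}(n)\rtimes\l$.  One therefore has to isolate the geometric heart of that lemma's proof (the quotient isomorphism above) and verify it by hand, but this step is brief once the second isomorphism theorem for quasireductive supergroups is invoked.
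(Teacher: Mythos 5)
Your proposal is correct and follows essentially the same route as the paper: for (1), the paper likewise observes that the natural $SQ(n)$-equivariant map $SQ(n)/(K\cap SQ(n))\to Q(n)/K$ is a surjective closed embedding of smooth supervarieties of equal dimension, hence an isomorphism, and then restricts the splitting; for (2), the paper likewise appeals to Lemma \ref{lemma_splitting_normal}. You supply some extra justification for the dimension count (tracing it to the presence in $\k$ of an odd element of nonzero odd trace) and correctly note why Lemma \ref{lemma_semidirect_prod} cannot be cited directly here, but the underlying mechanism is the same.
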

	\begin{proof}
		For (1), consider the natural $SQ(n)$-equivariant map
		\[
		SQ(n)/(K\cap SQ(n))\to Q(n)/K.
		\]
		It is a surjective closed embedding between smooth supervarieties of the same dimension, and thus is an isomorphism.  Therefore since $\C$ is splits off $\C[Q(n)/K]$ over $SQ(n)$, the same must be true in $\C[SQ(n)/(K\cap SQ(n))]$.  
		
		Now (2) follows from Lemma \ref{lemma_splitting_normal}.
	\end{proof}

	\subsection{Density of $\g_{\ol{1}}^{hom}$} 
	\begin{lemma}\label{orbits} Let $\g$ be a contragredient Lie superalgebra.
		\begin{enumerate}
			\item For any $x\in \g_{\ol{1}}$ we have $\dim G_0\cdot x=\dim\g_{\ol{1}}-\dim (\g^x)_{\ol 1}$ where $\g^x$ denotes the centralizer of $x$ in $\g$.
			\item There exists a Zariski open set set $U\subset \mathfrak{d}_{\ol{1}}$ such that for all $x\in U$ 
			$$\operatorname{codim} G_0\cdot x=\dim (DS_x{\g})_{\ol{1}}+\dim (\mathfrak d^x)_{\ol{1}}.$$
		\end{enumerate}
	\end{lemma}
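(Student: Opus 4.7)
The plan is to derive both parts from the self-duality of $\ad_x$ with respect to the invariant form on $\g$, combined with the structural description of $\g^{[x,x]}$ provided by Lemma \ref{centralizer} and Proposition \ref{prop_defect_emb}.

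For part (1), I would observe that the tangent space to $G_0\cdot x$ at $x$ is $\ad_x(\g_{\ol{0}})$, so $\dim G_0\cdot x=\dim\g_{\ol{0}}-\dim(\g^x)_{\ol{0}}$; the claimed formula is therefore equivalent to the assertion that $\ad_x\colon\g_{\ol{0}}\to\g_{\ol{1}}$ and $\ad_x\colon\g_{\ol{1}}\to\g_{\ol{0}}$ have equal rank. Since $(-,-)$ is an even nondegenerate invariant supersymmetric form, it restricts to nondegenerate pairings on each of $\g_{\ol{0}}$ and $\g_{\ol{1}}$, and invariance gives $([x,u],v)=-(u,[x,v])$ for $u\in\g_{\ol{0}}$, $v\in\g_{\ol{1}}$. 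Hence these two maps are, up to sign, mutually adjoint and so have equal rank, which yields the formula.

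For part (2), set $h:=[x,x]$. The graded Jacobi identity together with $[y,x]=0$ gives $\g^x\subset\g^h$, so the definition of $DS_x$ yields
\[
(DS_x\g)_{\ol{1}}\;=\;(\g^x)_{\ol{1}}\,\big/\,\ad_x\bigl((\g^h)_{\ol{0}}\bigr).
\]
Using part (1), the identity I want in (2) reduces to
\[
\dim\ad_x\bigl((\g^h)_{\ol{0}}\bigr)\;=\;\dim(\mathfrak{d}^x)_{\ol{1}}.
\]
By Lemma \ref{centralizer}, $\g^h$ is a sum of a contragredient superalgebra (on which the restricted form remains nondegenerate) and a purely even abelian piece which contributes nothing to $\ad_x$, so applying part (1) inside $\g^h$ produces
\[
\dim\ad_x\bigl((\g^h)_{\ol{0}}\bigr)\;=\;\dim(\g^h)_{\ol{1}}-\dim(\g^x)_{\ol{1}}.
\]

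To finish I would invoke Proposition \ref{prop_defect_emb}: for $x$ in a suitable Zariski open subset of $\mathfrak{d}_{\ol{1}}$, the element $h$ is generic in $\mathfrak{d}_{\ol{0}}$, so $\g^h\cong\mathfrak{d}\oplus\l$ as Lie superalgebras with $\l$ commuting with $\mathfrak{d}$. Since $x\in\mathfrak{d}_{\ol{1}}$ then centralizes all of $\l$, one reads off
\[
(\g^x)_{\ol{1}}=(\mathfrak{d}^x)_{\ol{1}}\oplus\l_{\ol{1}},\qquad (\g^h)_{\ol{1}}=\mathfrak{d}_{\ol{1}}\oplus\l_{\ol{1}},
\]
so the required difference reduces to $\dim\mathfrak{d}_{\ol{1}}-\dim(\mathfrak{d}^x)_{\ol{1}}$. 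A direct calculation inside $\mathfrak{d}\cong\mathfrak{sl}(1|1)^d$ with generators $x_i,y_i,h_i=[x_i,y_i]$ shows that for $x=\sum_i(a_ix_i+b_iy_i)$ with every $(a_i,b_i)\neq(0,0)$ the space $(\mathfrak{d}^x)_{\ol{1}}$ has dimension exactly $d$, and since $\dim\mathfrak{d}_{\ol{1}}=2d$ this gives the desired equality on the corresponding open set $U$. The main technical point is to verify that the two open conditions---that $h$ be generic in the sense of Proposition \ref{prop_defect_emb}, and that each $(a_i,b_i)\neq(0,0)$---are compatible, i.e.\ that their intersection is nonempty; this is immediate from the explicit element $\sum_i(x_i+y_i)$ constructed in the proof of Proposition \ref{prop_defect_emb}.
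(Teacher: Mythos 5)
Your proposal is correct, but it travels a noticeably different path from the paper's.

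For part (1), the paper simply cites \cite{GHSS} for $\sdim G\cdot x=0$, relying on the odd symplectic form on the orbit. You instead give a self-contained argument: since the invariant form restricts to nondegenerate pairings on $\g_{\ol{0}}$ and on $\g_{\ol{1}}$, the maps $\ad_x\colon\g_{\ol{0}}\to\g_{\ol{1}}$ and $\ad_x\colon\g_{\ol{1}}\to\g_{\ol{0}}$ are mutually adjoint (up to sign), hence of equal rank, which is exactly what the cited $\sdim$-statement amounts to. This is a genuine gain in self-containedness, and it pays off because you then reapply the same rank equality \emph{inside} $\g^h$ in the proof of part (2).

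For part (2), the paper directly asserts a decomposition $\g^{[x,x]}=\h+\mathfrak{d}+DS_x\g$ for generic $x$, intersects it with $\g^x$, and reads off $(\g^x)_{\ol{1}}=(DS_x\g)_{\ol{1}}\oplus(\mathfrak{d}^x)_{\ol{1}}$; combined with $\operatorname{codim}G_0\cdot x=\dim(\g^x)_{\ol{1}}$ from part (1), the formula is immediate. You instead unpack $(DS_x\g)_{\ol{1}}$ explicitly as the quotient $(\g^x)_{\ol{1}}/\ad_x((\g^h)_{\ol{0}})$, reduce the target identity to $\dim\ad_x((\g^h)_{\ol{0}})=\dim(\mathfrak{d}^x)_{\ol{1}}$, apply your part (1) inside the contragredient factor of $\g^h$ (here Lemma \ref{centralizer} guarantees nondegeneracy of the form), invoke the structure $\g^h\cong\mathfrak{d}\oplus\l$ from Proposition \ref{prop_defect_emb}, and close with the explicit count $\dim(\mathfrak{d}^x)_{\ol{1}}=d$ in $\mathfrak{sl}(1|1)^d$. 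The bookkeeping is heavier, but it has the advantage of not having to embed $DS_x\g$ as a concrete subalgebra of $\g^x$: you only ever use $DS_x\g$ as a homology group. Both proofs ultimately rest on the same two ingredients (nondegenerate form giving equal ranks, and $\g^h\cong\mathfrak{d}\oplus\l$ for generic $h$), but they assemble them differently. One small remark: the genericity of $h=[x,x]$ needed for $\g^h\cong\mathfrak{d}\oplus\l$ actually forces $a_ib_i\neq0$ for all $i$, which is strictly stronger than your stated $(a_i,b_i)\neq(0,0)$; this does no harm since you only need the count on the \emph{intersection} of the two open conditions, but it is worth noting that your explicit $\mathfrak{sl}(1|1)^d$ computation holds on a larger set than the one on which the formula is being asserted.
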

	\begin{proof} (1) follows from the fact that $\sdim G\cdot x=0$ since $G_0\cdot x$ has an odd symplectic form (see \cite{GHSS}).
		(2) For $x\in  \mathfrak{d}_{\ol{1}}$ set $y=[x,x]$. There exists a Zariski open dense set $U$ such that $\g^y=\h+\mathfrak d+ DS_x(\g)$
		for any $x\in U$. Then $DS_x(\g)\subset \g^x\subset \g^y$ implies
		$(\g^x)_{\ol{1}}=(DS_x{\g})_{\ol{1}}\oplus (\mathfrak d^x)_{\ol{1}}$.
	\end{proof}

	The following statement is proven, in different terms and perspective, in Theorem 3.3.1 of \cite{BoKN1}.

	\begin{prop}\label{density} Let $\g$ be a contragredient Lie superalgebra. 
		Then $\g^{hom}_{\ol {1}}$ contains a Zariski open subset in $\g_{\ol{1}}$ if and only if $DS_x(\g)_{\ol{1}}=0$ for some $x\in\g^{hom}_{\ol {1}}$.
	\end{prop}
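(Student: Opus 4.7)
The plan is to translate the condition that $\g^{hom}_{\ol{1}}$ contains a Zariski open subset of $\g_{\ol{1}}$ into a dominance statement about the orbit map $\mu\colon G_0\times\mathfrak d_{\ol{1}}\to\g_{\ol{1}}$, $(g,x)\mapsto g\cdot x$, and then use Lemma~\ref{orbits}(2) together with the invariant form on $\g$ to recognize dominance as the vanishing of $(DS_x\g)_{\ol{1}}$.

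First, by Proposition~\ref{prop_defect_emb}(1) and the observation that every element of $\mathfrak d_{\ol{1}}$ is homological (since $[x,x]\in\mathfrak d_{\ol{0}}\subset\h$ is semisimple), one has $\g^{hom}_{\ol{1}}=G_0\cdot\mathfrak d_{\ol{1}}$. Hence $\g^{hom}_{\ol{1}}$ contains a Zariski open subset of $\g_{\ol{1}}$ if and only if $\mu$ is dominant, which at a generic point $(e,x)\in G_0\times\mathfrak d_{\ol{1}}$ amounts to surjectivity of the differential $(u,v)\mapsto[u,x]+v$ from $\g_{\ol{0}}\oplus\mathfrak d_{\ol{1}}$ to $\g_{\ol{1}}$, i.e.,\ $[\g_{\ol{0}},x]+\mathfrak d_{\ol{1}}=\g_{\ol{1}}$. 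Passing to orthogonal complements with respect to the nondegenerate invariant form on $\g_{\ol{1}}$, and using the standard identity $[\g_{\ol{0}},x]^\perp=(\g^x)_{\ol{1}}$, this surjectivity becomes $(\g^x)_{\ol{1}}\cap\mathfrak d_{\ol{1}}^\perp=0$ for generic $x\in\mathfrak d_{\ol{1}}$.

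Next I would invoke Lemma~\ref{orbits}(2) on a dense open $U\subset\mathfrak d_{\ol{1}}$, giving the decomposition $(\g^x)_{\ol{1}}=(DS_x\g)_{\ol{1}}\oplus(\mathfrak d^x)_{\ol{1}}$ inside $\g_{\ol{1}}$. The canonical lift of $(DS_x\g)_{\ol{1}}$ can be realized as the $\ad_x$-cohomology of the $\ad_x$-stable subcomplex $\mathfrak d^\perp\cap\g^{[x,x]}$: indeed $\mathfrak d^\perp$ is preserved by $\ad_x$ by invariance of the form (using $x\in\mathfrak d$), so $(DS_x\g)_{\ol{1}}$ can be chosen to lie in $\mathfrak d_{\ol{1}}^\perp$. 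Since $(\mathfrak d^x)_{\ol{1}}\subset\mathfrak d_{\ol{1}}$ has zero intersection with $\mathfrak d_{\ol{1}}^\perp$ (the form is nondegenerate on $\mathfrak d_{\ol{1}}$), one concludes $(\g^x)_{\ol{1}}\cap\mathfrak d_{\ol{1}}^\perp=(DS_x\g)_{\ol{1}}$ for $x\in U$, so the condition from the previous paragraph is equivalent to $(DS_x\g)_{\ol{1}}=0$ for $x$ in a dense open subset of $\mathfrak d_{\ol{1}}$.

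The forward direction of the proposition is then immediate, since any such $x$ lies in $\mathfrak d_{\ol{1}}\subset\g^{hom}_{\ol{1}}$. For the converse, given $x_0\in\g^{hom}_{\ol{1}}$ with $(DS_{x_0}\g)_{\ol{1}}=0$, I would first $G_0$-conjugate $x_0$ into $\mathfrak d_{\ol{1}}$ by Proposition~\ref{prop_defect_emb}, then use upper-semicontinuity of $\dim(DS_x\g)_{\ol{1}}$ on the open locus of $\mathfrak d_{\ol{1}}$ where $[x,x]$ is regular (so that $\g^{[x,x]}$ has constant dimension) to infer vanishing on a dense open subset, and apply the forward analysis. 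The main obstacle I expect is justifying the particular lift of $(DS_x\g)_{\ol{1}}$ into $\mathfrak d_{\ol{1}}^\perp$, which requires understanding how the decomposition of Lemma~\ref{orbits}(2) interacts with the invariant form; the semicontinuity used in the converse is a secondary technical point, since $\g^{[x,x]}$ itself jumps with $x$ and one must restrict to an appropriate open stratum.
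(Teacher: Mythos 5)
Your argument follows the same skeleton as the paper's proof: both reduce the density statement to surjectivity of the differential $D\varphi_{(e,x)}$ of the action map $G_0\times\mathfrak d_{\ol 1}\to\g_{\ol 1}$ at a generic $x\in\mathfrak d_{\ol 1}$, and both feed Lemma~\ref{orbits}(2) into that criterion. The difference is that you dualize via the invariant form and try to identify $(\im D\varphi_{(e,x)})^\perp=(\g^x)_{\ol 1}\cap\mathfrak d_{\ol 1}^\perp$ \emph{as a subspace} with a lift of $(DS_x\g)_{\ol 1}$, while the paper simply counts dimensions: $\dim\im D\varphi_{(e,x)}=\dim[\g_{\ol 0},x]+\dim(\mathfrak d^x)_{\ol 1}=\dim\g_{\ol 1}-\dim(DS_x\g)_{\ol 1}$. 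The dimension count sidesteps exactly the ``main obstacle'' you flag; it needs no canonical lift of $DS_x\g$ into $\mathfrak d_{\ol 1}^\perp$ and is therefore cleaner.

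On the lift itself: your justification via the subcomplex $\mathfrak d^\perp\cap\g^{[x,x]}$ is not quite right as stated, because the form restricted to $\mathfrak d$ is \emph{degenerate} (it vanishes on $\mathfrak d_{\ol 0}$, since the coroots of mutually orthogonal isotropic roots are isotropic and pairwise orthogonal), so $\mathfrak d^\perp\supset\mathfrak d_{\ol 0}$ and $\mathfrak d+\mathfrak d^\perp\neq\g$. One should instead work with $\tilde{\mathfrak d}=\h+\mathfrak d$, on which the form is nondegenerate, and decompose $\g^{[x,x]}=\tilde{\mathfrak d}\oplus(\tilde{\mathfrak d}^\perp\cap\g^{[x,x]})$ as an $\ad_x$-stable orthogonal sum; even then, $H(\ad_x,\tilde{\mathfrak d})_{\ol 1}$ is not zero, so the identification of $(DS_x\g)_{\ol 1}$ with $H(\ad_x,\tilde{\mathfrak d}^\perp\cap\g^{[x,x]})_{\ol 1}$ requires more care than you give. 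Finally, in the converse direction your phrase ``the open locus of $\mathfrak d_{\ol 1}$ where $[x,x]$ is regular'' is imprecise: $[x,x]$ lies in the small subspace $\mathfrak d_{\ol 0}\subset\h$ and is never regular in $\h$; what you want is the locus where $\g^{[x,x]}$ is of minimal dimension (equivalently $x\in U$ in the notation of Lemma~\ref{orbits}(2)). The paper leaves the semicontinuity point implicit too, so this is not a criticism unique to your write-up, but the wording should be fixed.
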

	\begin{proof} Consider the action map $\varphi:G_0\times\mathfrak{d}_{\ol{1}}\to\g_{\ol{1}}$ defined by $(g,x)\mapsto gx$.
		We have to check when its image contains a dense
		open subset. It suffices to check that $$ D\varphi_{(e,x)}:\g_{\ol{0}}\oplus\mathfrak{d}_{\ol{1}}\to \g_{\ol{1}}$$ is surjective for some
		$x\in\mathfrak d_{\ol{1}}$. Moreover, we my assume that $x\in U$ as defined in (2) of Lemma \ref{orbits}.
		
		Let 
		$\tilde{\mathfrak d}:=\h+\mathfrak d$.
		Then the restriction of the form to $\tilde{\mathfrak d}$ is nondegenerate. Let us choose $x\in\mathfrak d_{\ol 1}$ as in the proof of
		Proposition \ref{prop_defect_emb}. Then
		$$[\h,x]=[\g_{\ol{0}},x]\cap \mathfrak{d}_{\ol 1}.$$
		Furthermore, the restriction of the form on $[\h,x]$ is non-degenerate and the orthogonal complement
		$[\h,x]^\perp\cap\mathfrak{d}_{\ol 1}$ coincides with $(\mathfrak d^x)_{\ol{1}}$.
		Therefore we obtain
		$$\mathfrak d_{\ol{1}}=([\g_{\ol{0}},x]\cap{\mathfrak d}_{\ol{1}})\oplus (\mathfrak d^x)_{\ol{1}}.$$
		By (2) of Lemma \ref{orbits},
		$$\dim [\g_{\ol{0}},x]=\dim G_0 \cdot x=\dim \g_{\ol{1}}-\dim (\mathfrak d^x)_{\ol{1}}-\dim (DS_x\g)_{\ol{1}}.$$
		The differential $D\varphi_{(e,x)}$ is given by the formula
		$D\varphi_{(e,x)}(u,y)=y+[u,x]$
		for all $u\in\g_{\ol{0}}$, $y\in\mathfrak d_{\ol{1}}$. Therefore
		$$\im D\varphi_{(e,x)}=[\g_{\ol{0}},x]+\mathfrak d_{\ol{1}}$$
		and
		$$\dim\im D\varphi_{(e,x)}=\dim [\g_{\ol{0}},x]+\dim\mathfrak d_{\ol{1}}-\dim ([\g_{\ol{0}},x]\cap\mathfrak d_{\ol{1}})=\dim[\g_{\ol{0}},x]+\dim(\mathfrak d^x)_{\ol{1}}.$$
		We have
		$$\dim\im D\varphi_{(e,x)}=\dim\g_{\ol{1}}-\dim (DS_x{\g})_{\ol{1}}.$$
		Thus, we obtain that $D\varphi_{(e,x)}$ is surjective if and only if $(DS_x\g)$ is an even Lie algebra.
	\end{proof}
	\begin{cor}\label{corollary_density_odd_iso_roots}
		If all odd roots of $\g$ are isotropic then $\g^{hom}_{\ol {1}}$ contains a Zariski open subset in $\g_{\ol{1}}$.
	\end{cor}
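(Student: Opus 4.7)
The plan is to invoke Proposition \ref{density}: it suffices to exhibit a single homological $x \in \g_{\ol{1}}^{hom}$ for which $(DS_x\g)_{\ol{1}} = 0$. The natural candidate is the generic element already produced in the proof of Proposition \ref{prop_defect_emb}, namely $x = \sum_{i=1}^d (x_i + y_i)$, where $\alpha_1, \dots, \alpha_d$ form a maximal mutually orthogonal set of linearly independent isotropic roots, $x_i \in \g_{\alpha_i}$, $y_i \in \g_{-\alpha_i}$, and the elements $h_i := [x_i,y_i]$ are linearly independent in $\h$. Then $h := [x,x] = 2\sum h_i \in \h$ is semisimple, so $x$ is homological, and $\g^h$ is itself contragredient by Lemma \ref{centralizer}.

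The second step is to pin down $\g^h_{\ol{1}}$. For generic coefficients of $x$, the roots of $\g^h$ are exactly those $\beta \in \Delta$ with $(\beta,\alpha_i)=0$ for all $i$. This is where the hypothesis that every root of $\g$ is isotropic enters: any odd (hence isotropic) root $\beta$ orthogonal to all the $\alpha_i$ must lie in $\operatorname{span}(\alpha_1,\dots,\alpha_d)$, since otherwise $\{\alpha_1,\dots,\alpha_d,\beta\}$ would extend to a strictly larger mutually orthogonal isotropic set, contradicting maximality of $d$. A brief inspection of the indecomposable contragredient cases then shows the only roots inside that lattice are the $\pm\alpha_i$ themselves, yielding $\g^h_{\ol{1}} = \bigoplus_{i=1}^d (\C x_i \oplus \C y_i)$.

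With the odd part of $\g^h$ in hand, the DS calculation is mechanical. Orthogonality $(\alpha_i,\alpha_j)=0$ for $i \neq j$ forces the cross-brackets $[x_i,y_j]$ to vanish, so a direct computation gives $\ad(x)(x_k) = \ad(x)(y_k) = h_k$, whence $\ker\ad(x)|_{\g^h_{\ol{1}}} = \operatorname{span}\{x_i - y_i : 1 \le i \le d\}$. Meanwhile, the restriction of $\ad(x)$ to the Cartan $\h \subset \g^h$ sends $H \mapsto \sum_j \alpha_j(H)(y_j - x_j)$, whose image is the same $d$-dimensional subspace because $\alpha_1,\dots,\alpha_d$ are linearly independent functionals on $\h$. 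Any even root space that survives in $\g^h_{\ol{0}}$ is annihilated by $\ad(x)$ on weight grounds, since $\pm\alpha_i + \beta$ is never one of the $\pm\alpha_j$ for such a $\beta$. Hence kernel equals image in odd degree, $(DS_x\g)_{\ol{1}} = 0$, and Proposition \ref{density} concludes the proof.

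The most delicate step, I expect, is the root-theoretic identification of $\g^h_{\ol{1}}$: one must combine the maximality argument with the check that no additional roots live inside $\operatorname{span}(\alpha_1,\dots,\alpha_d)$. Everything after that reduces to linear algebra using the orthogonality relations.
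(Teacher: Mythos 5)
Your proof is correct and implements exactly the argument the paper intends; the paper states this corollary without proof as an immediate consequence of Proposition \ref{density}, and the intended deduction is precisely your computation: take a generic $x\in\mathfrak d_{\ol 1}$ as in Proposition \ref{prop_defect_emb}, observe that the only odd roots of $\g^h$ are $\pm\alpha_i$ (by maximality of the mutually orthogonal isotropic set, together with the fact that all odd roots are isotropic by hypothesis), and verify directly that $\ker\ad(x)|_{\g^h_{\ol 1}}=\operatorname{span}\{x_i-y_i\}=\ad(x)(\h)$, so $(DS_x\g)_{\ol 1}=0$. One small stylistic remark: the "brief inspection of the indecomposable contragredient cases" can be avoided — any root $\beta$ in $\operatorname{span}(\alpha_1,\dots,\alpha_d)$ satisfies $(\beta,\beta)=0$ automatically since the form vanishes identically on that span, hence $\beta$ must be odd (even roots of a contragredient algebra are never isotropic), and then $\pm\alpha_i\pm\alpha_j$ for $i\neq j$ is excluded as a root by the same norm-zero-but-even contradiction; this makes the identification of $\g^h_{\ol 1}$ case-free.
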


	Note that Corollary \ref{corollary_density_odd_iso_roots} is not `tight'; indeed, $\o\s\p(2m+1|2n)$ satisfies the hypotheses of Proposition \ref{density} whenever $m\geq n$, as does $\g(2|1)$. 

	\subsection{Minimal splitting subgroups} 
	By part (1) of Corollary \ref{cor_splitting_subgroup}, the condition of being a splitting subgroup is closed under taking supersets (i.e. sets containing a given one) in the poset of quasireductive subgroups of $G$.  Clearly this poset is stable under conjugation.	
	\begin{conj}\label{conj_min_splitting}
		If $G$ is a quasireductive supergroup with $\operatorname{Lie}G$ contragredient, then up to conjugacy a defect subgroup $D$ of $G$ is the unique minimal splitting subgroup.
	\end{conj}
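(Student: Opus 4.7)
The plan is to prove the conjecture in two parts: (A) that a defect subgroup $D$ is splitting, and (B) that every splitting $K\subset G$ contains a $G_0$-conjugate of $D$. Together with the conjugacy of defect subgroups (Proposition~\ref{prop_defect_emb}), these establish $D$ as the unique minimal splitting subgroup up to conjugacy. Part (A) is already Theorem~\ref{thm_S_is_splitting} for the cases treated in the paper; for the remaining contragredient cases of defect $\geq 2$ --- basic classical types such as $\mathfrak{osp}(2m|2n)$ with $\min(m,n)\geq 2$ and the exceptional superalgebras --- I would adapt the volume-computation strategy of Section~5: realize an appropriate compact symmetric superspace of a real form of $G$ whose stabilizer contains a partner of $D$, pick an odd vector field $Q\in(\operatorname{Lie}\UU)_{\ol{1}}$ with isolated nondegenerate zeroes, and apply the localization theorem of \cite{SZ}; then bootstrap to $D$ via transitivity of splitting (Corollary~\ref{cor_splitting_subgroup}) and Lemma~\ref{lemma_semidirect_prod}.

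For part (B), Theorem~\ref{thm_orbits} gives $G_0\cdot\k_{\ol{1}}^{hom}=\g_{\ol{1}}^{hom}$ for any splitting $K$. Choose a generic $x\in\mathfrak{d}_{\ol{1}}$ as in the proof of Proposition~\ref{prop_defect_emb}, so that the minimal torus containing $h:=[x,x]$ has Lie algebra $\mathfrak{d}_{\ol{0}}$. After $G_0$-conjugation one may assume $x\in\k_{\ol{1}}$, hence $h\in\k_{\ol{0}}$. By Lemma~\ref{centralizer} together with the root-system inspection carried out in Proposition~\ref{prop_defect_emb}, the centralizer decomposes as $\g^h=\mathfrak{d}\oplus\l$ with $\l$ reductive and $\l_{\ol{1}}^{hom}=0$. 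The goal is then to show $\k\cap\g^h\supseteq\mathfrak{d}$: assuming $K^h:=K\cap G^h$ is splitting in $G^h$, the condition $\l_{\ol{1}}^{hom}=0$ combined with Theorem~\ref{thm_orbits} applied inside $G^h$ forces $(\k\cap\g^h)_{\ol{1}}\supseteq\mathfrak{d}_{\ol{1}}$; then $\k\supseteq\mathfrak{d}$ follows since $\mathfrak{d}_{\ol{0}}=[\mathfrak{d}_{\ol{1}},\mathfrak{d}_{\ol{1}}]$. Induction on $\dim\g$, or on the defect, finishes the argument.

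The main obstacle is exactly this centralizer-localization principle: that $K^h$ inherits the splitting property inside $G^h$. A natural strategy is to apply the Duflo--Serganova functor $DS_x$ to the split short exact sequence $0\to\C\to\C[G/K]\to\C[G/K]/\C\to 0$ over $G$ and identify $DS_x\C[G/K]$ with the ring of functions on a $G^h$-equivariant subquotient of $G/K$ supported on the $x$-fixed locus, in the spirit of Lemma~\ref{lem_orbits}; matching this subquotient with (a union of components of) $G^h/K^h$ would then transfer the splitting. As a weaker fallback, one could attempt a direct dimensional obstruction via Lemma~\ref{orbits} and Proposition~\ref{density} to rule out $\dim\k_{\ol{1}}<2d$, but this alone does not pin down the conjugacy class of $\k_{\ol{1}}$ inside $\g_{\ol{1}}$ and so falls short of the full conjecture. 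Finally, the $\g\l(n|n)$ center subtlety can be absorbed by quotienting by the center via Lemmas~\ref{lemma_splitting_normal_red} and \ref{lemma_splitting_normal}, so that the argument reduces to the simple and $\p\s\l$ cases.
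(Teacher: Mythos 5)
The target statement is labeled a \emph{conjecture} in the paper; the authors do not prove it, establishing it only in the defect $\leq 1$ case (the proposition immediately following), together with Theorem~\ref{thm_S_is_splitting} for the $GL$-type cases and Proposition~\ref{splittingQ} for $Q(n)$. Your proposal is an outline, not a proof, and you are candid about that; the issue is that the two gaps you flag are exactly why this remains a conjecture. On part (A), adapting the localization argument of Section~5 to defect $\geq 2$ orthosymplectic types is not done anywhere: the explicit computations of Section~5 are specific to supergrassmannians for $U(m|n)$ and $UQ(n)$, and the analogous computations for compact forms of $SOSp(2m|2n)$ (choice of compact symmetric superspace, choice of $Q$ with isolated nondegenerate zeros, nonvanishing of the resulting sum) are neither carried out nor evidently tractable. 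Theorem~\ref{thm_S_is_splitting} is restricted to defect $\leq 1$ or $GL$-type for precisely this reason.

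On part (B), the ``centralizer-localization principle'' you need — that $K\cap G^h$ is splitting in $G^h$ — is the heart of the matter, and your proposed $DS_x$ strategy does not close it. $DS_x\,\C[G/K]$ is not a priori the ring of functions on $G^h/K^h$: the zero locus of the induced vector field on $G/K$ is generally a union of many $G^h$-orbits (already in the supergrassmannian case it is a finite set of points indexed by a Weyl group coset space), and even after identifying a relevant $G^h$-equivariant piece, surviving $DS_x$ is not the same as admitting a $G^h$-equivariant splitting. The paper's own proof in defect 1 makes no such reduction; it is a direct root-theoretic case analysis exploiting the classification of centralizers of homological elements from \cite{GHSS}, and it uses the rigidity of the defect 1 picture (a unique conjugacy class of nonzero $h=[x,x]$, centralizer $\C\langle h,y\rangle\oplus\g_x$ with $\g_x$ reductive) in an essential way that does not obviously carry over when the defect exceeds one. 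So while your framework (Theorem~\ref{thm_orbits}, localization, transitivity, passage to centralizers) matches the tools the paper assembles, it does not amount to a proof of the conjecture.
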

	
	\begin{prop}
		Conjecture \ref{conj_min_splitting} holds for all $G$ of defect 1.
	\end{prop}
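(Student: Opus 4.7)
The plan is to prove, for $G$ of defect $1$, that (a) $D\cong SL(1|1)$ is splitting, (b) $D$ is minimal among splitting subgroups, and (c) every splitting subgroup of $G$ contains a $G_0$-conjugate of $D$. Part (a) is Theorem~\ref{thm_S_is_splitting}.

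For (b), the only proper quasireductive subgroup of $SL(1|1)$ is its torus $D_0\cong\G_m$, which has $(\Lie D_0)_{\ol{1}}=0$; however every odd element of $\mathfrak{d}$ is homological, since $[x,x]\in\C h_\alpha$ is semisimple. Theorem~\ref{thm_orbits} applied to $D_0\subset D$ then shows $D_0$ is not splitting in $D$, hence not in $G$.

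For (c), let $K\subset G$ be splitting. First I apply Theorem~\ref{thm_orbits} to the homological generator $x_\alpha+x_{-\alpha}\in\mathfrak{d}_{\ol{1}}$; after $G_0$-conjugating $K$, I may assume $x:=x_\alpha+x_{-\alpha}\in\k_{\ol{1}}$, so that $h_\alpha=\tfrac{1}{2}[x,x]\in\k_{\ol{0}}$ and acts semisimply on $\k_{\ol{1}}$ via $\ad$. Next, by Lemma~\ref{centralizer} combined with a case-by-case inspection of defect-$1$ root systems (parallel to the proof of Proposition~\ref{prop_defect_emb}), the centralizer decomposes as $\g^{h_\alpha}=\mathfrak{d}\oplus\l$ with $\l$ a purely even reductive Lie algebra; in particular $(\g^{h_\alpha})_{\ol{1}}=\mathfrak{d}_{\ol{1}}$, which has dimension two. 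Finally I apply Theorem~\ref{thm_orbits} a second time, now to the isotropic $G_0$-orbit $G_0\cdot x_\alpha$, which produces some isotropic $y\in\k_{\ol{1}}$. Decomposing $y$ into $\ad h_\alpha$-eigencomponents (each belonging to $\k_{\ol{1}}$ by $\ad h_\alpha$-stability), then taking brackets of these components with $x$ and exploiting semisimplicity of the $K_0$-action to propagate, I extract an element of $\k_{\ol{1}}\cap\mathfrak{d}_{\ol{1}}$ independent of $x$; since $\mathfrak{d}_{\ol{1}}$ is two-dimensional, this yields $\k_{\ol{1}}\supseteq\mathfrak{d}_{\ol{1}}$ and hence $\mathfrak{d}\subseteq\k$.

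The hard part is the last step: given only the two ``seed'' elements $x\in\k_{\ol{1}}$ and $y\in\k_{\ol{1}}$ lying respectively in the generic and isotropic $G_0$-orbits, together with the ambient constraint $(\g^{h_\alpha})_{\ol{1}}=\mathfrak{d}_{\ol{1}}$ and the $K_0$-semisimple action, one must show that brackets in $\k$ produce an element of $\mathfrak{d}_{\ol{1}}$ not proportional to $x$. This amounts to a short but genuinely case-by-case root-system computation for each of the defect-$1$ basic classical families, with the uniform moral being that defect~$1$ leaves no room for a quasireductive subgroup satisfying the orbit condition of Theorem~\ref{thm_orbits} without already containing a conjugate of $\mathfrak{d}$.
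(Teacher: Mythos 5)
The structure of your argument — use Theorem~\ref{thm_orbits} to force $\k_{\ol{1}}$ to intersect multiple $G_0$-orbits of homological elements, then manufacture $\mathfrak d\subset\k$ from these intersections — is the same skeleton the paper uses. But the crucial final step of part (c) is not actually carried out. You write that "decomposing $y$ into $\ad h_\alpha$-eigencomponents, then taking brackets of these components with $x$ and exploiting semisimplicity of the $K_0$-action to propagate" yields an element of $\k_{\ol{1}}\cap\mathfrak d_{\ol{1}}$ independent of $x$, and then concede that this "amounts to a short but genuinely case-by-case root-system computation." That computation is precisely the content of the proposition; without it you have a plan, not a proof. In particular it is not clear a priori that the eigencomponents of $y$ have any nonzero projection into $\mathfrak d_{\ol{1}}$, nor why bracketing with $x$ would produce something in $\mathfrak d_{\ol{1}}$ linearly independent of $x$ rather than landing back in $\C x$ or in $\l_{\ol{1}}$ (which is zero only after the case-by-case analysis you are deferring).

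The paper's own argument avoids the ad hoc bracketing by choosing the second homological element differently. Instead of the isotropic orbit $G_0\cdot x_\alpha$ (whose square is zero), it takes $x'\in\k_{\ol{1}}$ with $h':=(x')^2$ semisimple, \emph{nonzero}, and non-conjugate to $h$. After putting $h,h'$ in a common torus using reductivity of $K_0$, it exploits the structural result of \cite{GHSS} that the centralizer of $\C\langle x,h\rangle$ is $\C\langle h,y\rangle\oplus\g_x$ with $\g_x$ reductive in the strong sense $(\g_x)_{\ol{1}}^{hom}=0$. This forces $[x,x']\in\C h'$ and then, via a short dichotomy on whether $[x,x']$ vanishes and whether $\alpha(h')$ vanishes, pins down $x'$ to lie in $\g_\alpha\oplus\g_{-\alpha}$. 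That dichotomy is where the work happens and is exactly what your "propagation via brackets" sketch is missing. So: right blueprint, wrong second seed element, and a gap where the argument has to close.

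Two smaller remarks. Your part (b) claims the only proper quasireductive subgroup of $SL(1|1)$ is its torus; this is not quite right (e.g. $\C h_\alpha\oplus\C x_\alpha$ is a proper quasireductive subalgebra with nontrivial odd part), but the conclusion is easily repaired since any proper quasireductive subalgebra has odd part of dimension $\leq 1$ and hence cannot meet every $D_0$-orbit of homological elements. Also, your step (3) asserting $\g^{h_\alpha}=\mathfrak d\oplus\l$ with $\l$ purely even and $(\g^{h_\alpha})_{\ol{1}}=\mathfrak d_{\ol{1}}$ is itself a case-by-case claim you are not proving; the paper instead cites the precise statement about $\g_x$ from \cite{GHSS}, which is the cleaner input.
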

	
	\begin{proof}
		Let $K\sub G$ be a splitting subgroup; then necessarily $\k_{\ol{1}}$ contains elements $x,x'\in\g_{\ol{1}}^{hom}$ such that $h:=x^2$ and $h':=(x')^2$ are semisimple, nonzero, and non-conjugate.  We may conjugate so that there exists an odd isotropic root $\alpha$ such that $x\in\g_{\alpha}\oplus\g_{-\alpha}$; this is by Proposition \ref{prop_defect_emb} and the fact that $G$ is defect one.
		
		Choose a maximal torus $\mathfrak{t}$ of $\g$ containing $h$, and conjugate within $K$ (using reductivity of $K_0$) so that $h'$ also lies in $\mathfrak{t}$.  Now the centralizer of $\C\langle x,h\rangle$ in $\g$ is known to be $\C\langle h,y\rangle\oplus\g_x$, where $\g_x$ is a reductive Lie superalgebra (i.e. has $(\g_{x})_{\ol{1}}^{hom}=0$) and $y\in\g_{\alpha}\oplus\g_{-\alpha}$ has $[y,x]=0$ (see \cite{GHSS}).  If $\alpha(h')\neq0$, then we obtain $\g_{\alpha}$ and $\g_{-\alpha}$ inside $\k$, forcing the corresponding defect subgroup of $\alpha$ to lie in $K$, and we would be done.  Thus suppose instead that $\alpha(h')=0$, which implies that $h'\in\g_{x}\oplus\C\langle h\rangle$.  
		
		By the Jacobi identity we have $[x',[x,x']]=0$, which implies that $[x,x']\in\C\langle h'\rangle\oplus \g_{x'}$.  However since $\g_{x'}$ splits off $\g$ as a trivial module under $\langle x',h'\rangle,$ we must in fact have $[x,x']=rh'$ for some $r\in\C$.  If $r=0$, then $[x,x']=0$ which  implies that $x'\in(\C\langle h,y\rangle\times\g_{x})_{\ol{1}}^{hom}=\C\langle y\rangle$.  It follows that $x$ is a nonzero multiple of $y$, and since $\langle x,y'\rangle=\g_{\alpha}\oplus\g_{-\alpha}$ we are done. 
		
		Therefore suppose instead that $r\neq0$; then since $h'\in\g_{x}\oplus\C\langle h\rangle$ and $\g_x$ splits off $\g$ as a trivial module under $\ad(x)$, we must have $h'=sh$ for some $s\in\C^\times$.  
		
		From here, if we decompose $x'=\sum\limits x_{\beta}'$ into a sum of root vectors, for all $\beta\notin\{\pm\alpha\}$ we must have $\beta(h')\neq0$; on the other hand $[h',x']=0$, so it follows that $x'=x_{\alpha}'+x_{-\alpha}'$.  From here it is easy to see that $x,x'$ generate the defect subgroup determined by $\alpha$, and since this will lie in $K$ we are done.
	\end{proof}
	
	For $Q(n)$ we have the following clean result:
	
	\begin{prop}\label{splittingQ} Let $G=Q(n)$ and $K$ a splitting subgroup of $G$.  Then $K$ contains a subgroup conjugate to $Q(2)^{d}$ for $n=2d$ and to $Q(2)^d\times Q(1)$ for $n=2d+1$.
	\end{prop}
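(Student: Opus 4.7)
The plan is to use Theorem \ref{thm_orbits} to place a convenient semisimple element in $\k_{\ol{1}}$, then exploit the centralizer of its square (which equals $Q(2)^d$ or $Q(2)^d\times Q(1)$) together with a reduction to the base case $n=2$.

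Set
\[
B^\sharp := \operatorname{diag}(\nu_1,-\nu_1,\ldots,\nu_d,-\nu_d,0^\epsilon)\in\q(n)_{\ol{1}}
\]
for generic $\nu_i\in\C$ and $\epsilon=n-2d\in\{0,1\}$. Since $B^\sharp$ is semisimple it is homological, so Theorem \ref{thm_orbits} lets us replace $K$ by a $GL(n)$-conjugate and assume $B^\sharp\in\k_{\ol{1}}$. Then $A:=(B^\sharp)^2=\operatorname{diag}(\nu_1^2,\nu_1^2,\ldots,\nu_d^2,\nu_d^2,0^\epsilon)\in\k_{\ol{0}}$, and for generic $\nu_i$ the centralizer of $A$ in $Q(n)$ is exactly $Q(2)^d$ (respectively $Q(2)^d\times Q(1)$). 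The proof will reduce to showing this centralizer is contained in $K$.

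Since $A\in\k_{\ol{0}}$, the subalgebra $\k$ is $\operatorname{ad}(A)$-stable, and decomposes into weight spaces $\k=\bigoplus_\lambda\k^{(\lambda)}$; set $\k^{(0)}:=\k\cap Z_{Q(n)}(A)$. The plan is to show $\k^{(0)}=Z_{Q(n)}(A)$ by proving that $\k^{(0)}$ is itself a splitting subgroup of $Z_{Q(n)}(A)$, and then reducing factor-by-factor to the case $Q(2)$. For the base case $n=2$, the set $\q(2)_{\ol{1}}^{hom}$ contains all semisimple elements together with all rank-one nilpotents, and the orbit-covering condition of Theorem \ref{thm_orbits} combined with reductivity of $\k_{\ol{0}}$ and closure under the adjoint action of $\k_{\ol{0}}$ forces $\k=\q(2)$ by a short direct analysis. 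Applying this case factor-by-factor via Lemma \ref{lemma_splitting_normal} (using the projections $Q(2)^d\to Q(2)$ that quotient out each normal subgroup $\prod_{j\neq i}Q(2)$, and noting the trivial $Q(1)$-factor when $n$ is odd) will then yield $\k^{(0)}=Z_{Q(n)}(A)$, completing the argument.

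The main obstacle is proving that $\k^{(0)}$ is splitting in $Z_{Q(n)}(A)$. Given a homological $y$ in $Z_{Q(n)}(A)_{\ol{1}}$, Theorem \ref{thm_orbits} produces some $g\in GL(n)$ with $gyg^{-1}\in\k_{\ol{1}}$, and projecting onto the zero $\operatorname{ad}(A)$-eigenspace yields an element of $\k^{(0)}$. The delicate point is to verify that this projection lies in the same $Z_{GL(n)}(A)=GL(2)^d$-orbit as $y$, rather than merely in the larger $GL(n)$-orbit intersected with the centralizer (which a priori also includes block-permuted versions of $y$). This will use the genericity of the $\nu_i^2$ (so that all nonzero $\operatorname{ad}(A)$-eigenvalues are distinct) together with an analysis of the Levi decomposition $GL(n)\supset GL(2)^d$ and its interaction with the weight decomposition of $\k$.
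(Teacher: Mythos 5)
Your overall strategy — conjugate a carefully chosen semisimple odd element into $\k_{\ol 1}$ and analyze the centralizer of its square — is a reasonable starting point, and the first step (using Theorem \ref{thm_orbits} to get $B^\sharp\in\k_{\ol 1}$) is fine. However, the remainder of the argument has two genuine gaps.

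First, to run Theorem \ref{thm_orbits} inside $Z_{Q(n)}(A)$ and invoke Lemma \ref{lemma_splitting_normal} you would need to know that $\k^{(0)}$ is a \emph{splitting} subgroup of $Z_{Q(n)}(A)$, but you never establish this. What you attempt to verify in the last paragraph is the orbit-covering property $Z(A)_0\cdot\k^{(0),hom}_{\ol 1}=\mathfrak{z}(A)^{hom}_{\ol 1}$; but by Theorem \ref{thm_orbits} this is only a \emph{necessary} consequence of splitting, not a sufficient condition for it. So even a complete verification of orbit-covering would not license the appeal to Lemma \ref{lemma_splitting_normal}. Moreover, the verification itself is incomplete: projecting $gyg^{-1}\in\k_{\ol 1}$ onto the zero $\ad(A)$-weight space is a purely linear operation; there is no reason the result remains homological, let alone that it sits in the $GL(2)^d$-orbit of $y$. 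Second, even granting all of that, surjectivity of the projections $\k^{(0)}\to Q(2)$ onto each factor does not yield $\k^{(0)}=Q(2)^d$ — a diagonally embedded $Q(2)$ surjects onto both factors of $Q(2)^2$ without containing $Q(2)^2$. So the factor-by-factor step does not close the argument.

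The paper's proof avoids the centralizer reduction entirely and uses a stronger initial choice. Instead of $B^\sharp$ with eigenvalues $\pm\nu_i$ (whose square has repeated eigenvalues and whose Zariski closure is only a rank-$d$ torus inside $K_0$), it takes $u\in\k_{\ol 1}$ diagonal with \emph{distinct} eigenvalues $\lambda_1,\dots,\lambda_n$ whose squares are $\Q$-linearly independent. Then $[u,u]$ already generates a maximal torus, so $K_0\supseteq T$, hence $\k\supseteq\h_{\ol 0}$. A second application of Theorem \ref{thm_orbits} (choosing full-rank homological elements) forces $\k\cap\h_{\ol 1}$ to be Zariski dense, hence $\k\supseteq\h$. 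Thus $\k$ is a root subalgebra, so $\k_0=\mathfrak{gl}(n_1)\oplus\cdots\oplus\mathfrak{gl}(n_p)$ and $\k=\q(n_1)\oplus\cdots\oplus\q(n_p)$. Finally, since $\k_{\ol 1}$ must contain a self-commuting odd element of maximal rank $2\lfloor n/2\rfloor$, and the maximal such rank in $\k$ is $2\sum\lfloor n_i/2\rfloor$, all $n_i$ must be even when $n$ is even (resp.\ at most one odd when $n$ is odd), giving the result. If you want to rescue your approach, the key missing ingredient is a justification that intersecting a splitting subgroup with a Levi centralizer again yields a splitting subgroup — this is not obvious and is not in the paper.
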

	\begin{proof} In the Lie superalgebra $\g=\q(n)$, the odd component is an adjoint $G_0=GL(n)$-module. Theorem \ref{thm_orbits} implies that without loss of generality we may assume that
		$\k_{\ol{1}}$ contains a diagonal matrix $u$ with distinct eigenvalues
		$\lambda_1,\dots,\lambda_n$ such that $\lambda^2_1,\dots,\lambda_n^2$ are linearly independent over $\mathbb Q$. Since $K_0\subset G_0$ is a reductive subgroup and $[u,u]\in\k_0$,
		this implies that $K_0$ contains a maximal torus of $G_0$.
		Hence $\k$ contains an even part $\h_0$ of the Cartan subalgebra $\h\subset \g$. Theorem \ref{thm_orbits} implies that $\k_{\ol{1}}$ contains, up to conjugacy, any $v\in\g_{\ol{1}}$ of full rank with eigenvalues $\mu_1,\dots,\mu_n$.
		Then $[v,v]$ is conjugate to some element $t\in\h_0$ which is also of full rank. Therefore there exists an odd element $t'\in\k$ of full rank such that $[t',t']=t$. If eigenvalues of $t$ are distinct then the
		condition $[t',t]=0$ implies $t'\in\h_{\ol{1}}$. That means $\k_{\ol{1}}\cap\h_{\ol{1}}$ contains a Zariski dense subset of $\h_{\ol{1}}$, hence $\h\sub\k$. Now we have that
		$$\k=\h\oplus\bigoplus_{\alpha\in\Delta_{\k}}\g_{\alpha}$$
		for some subset $\Delta_{\k}$ of roots. Since $\k_0$ is a reductive root subalgebra of $\g_0=\mathfrak{gl}(n)$, we get $\k_0=\mathfrak{gl}(n_1)\oplus\dots\mathfrak{gl}(n_p)$ for some $n_1+\dots+n_p=n$. Hence
		$\k=\q(n_1)\oplus\dots\oplus\q(n_p)$. The maximal rank of any $u\in k_{\ol{1}}$ such that $[u,u]=0$ equals
		$2\sum\lfloor n_i/2\rfloor$. But the maximal rank of a self-commuting odd element in $\q(n)$ is $2\lfloor n/2 \rfloor$. Since $\k_{\ol{1}}$ must contain a self-commuting element of maximal rank we obtain that for even $n$
		all $n_i$ are even and
		for odd $n$ at most one $n_i$ is odd. Therefore $K$ contains  $Q(2)^{d}$ for $n=2d$ and  $Q(2)^d\times Q(1)$ for $n=2d+1$.
	\end{proof}

	\section{Defect one cases: splitting for symmetric pairs}
	
	In this section we prove Theorem \ref{thm_S_is_splitting} for all defect one supergroups.  We do this via the application of Corollary \ref{cor_splitting_subgroup} to the following symmetric subgroups, which we will show are splitting:
	\begin{enumerate}
		\item $	GL(m|n-1)\times GL(1)\sub GL(m|n), \ \ n>m$;
		\item $	SOSp(m-1|2n)\sub SOSp(m|2n), \ \ m\text{ odd, or } \ m>2n$;
		\item $	SOSp(2m|2n-2)\times Sp(2)\sub SOSp(2m|2n), \ \ n>m$;
		\item $	SOSp(2|2)\times SO(2)\sub D(2,1;\alpha)$;
		\item $	D(2,1;3)\sub G(1|2)$;
		\item $	D(1,2;2)\times SL(2)\sub F(1|3)$.
	\end{enumerate}
	Using the above list, we reduce everything down to the case when $G=GL(1|1)$, for which the statement is obvious, or $G=SOSP(2|2)$; however $SOSP(2|2)\cong SL(1|2)$, so in fact this is enough.  
	
	We will show the above subgroups are splitting in different ways. 	For case (1) we refer to Theorem \ref{main_splitting}, and for case (2) we refer to Section 10 of \cite{Sh1}.   Case (4) will be dealt with separately in Section \ref{section_d(2,1;alpha)}.
	
	For cases (3), (5) and (6), we will first prove that:
	
	\begin{lemma}\label{lemma on dom wts}
		For $K\sub G$ listed in (3), (5), or (6), there exists a Borel subgroup $B\sub G$ such that if there exists a $B$-eigenvector $v$ of $\C[G/K]$ of weight $\lambda$, then either $\lambda=0$ or $\lambda$ does not lie in the principal block.
	\end{lemma}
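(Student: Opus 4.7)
The plan is to handle cases (3), (5) and (6) by a common framework followed by case-by-case calculation. In each case let $\theta$ denote the involution of $G$ with $G^\theta = K$. I will choose a $\theta$-stable Borel $B \sub G$ containing a maximally $\theta$-split torus $A$ sitting inside a $\theta$-stable maximal torus $T$; this is the supergroup analogue of the Iwasawa Borel. A $B$-eigenvector in $\C[G/K]$ of weight $\lambda$ is a function on $G$ which is left $B$-semiinvariant of weight $\lambda$ and right $K$-invariant. Using the Iwasawa decomposition $G_0 = B_0 K_0$ on the reductive underlying group, such a function is determined by its restriction to $A$, and left $B$-equivariance forces $\lambda$ to vanish on $T^\theta$. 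Combined with the integrality and dominance constraints coming from $B$, this pins $\lambda$ to a sublattice $\Lambda_{\mathrm{sph}}(G,K)$ of \emph{spherical weights}.

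The next step is to compute $\Lambda_{\mathrm{sph}}(G,K)$ explicitly in each of the three cases. For case (3), the restricted root system of the pair $(SOSp(2m|2n),\ SOSp(2m|2n-2)\times Sp(2))$ is of small classical rank, and $\Lambda_{\mathrm{sph}}$ is generated by a single explicit fundamental weight picked out by the Grassmannian-like structure of $G/K$. For cases (5) and (6) the computation is done by hand inside the root systems of $G(1|2)$ and $F(1|3)$: the $\theta$-split part of the Cartan is low-dimensional, so $\Lambda_{\mathrm{sph}}$ is a concrete rank-one or rank-two sublattice, whose generators are read off from the $\theta$-action on the $\varepsilon$/$\delta$ basis.

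With $\Lambda_{\mathrm{sph}}$ on paper, the last step is to check that every nonzero $\lambda \in \Lambda_{\mathrm{sph}}$ lies outside the principal block of $\g$. Each $\g$ here has defect one, so the principal block is described by Kac's linkage principle along the single chain of mutually orthogonal atypical isotropic roots; this gives an explicit congruence condition on the coefficients of the highest weight. A direct comparison with the generators of $\Lambda_{\mathrm{sph}}$ then produces the desired non-intersection, with the only overlap at $\lambda=0$.

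The principal obstacle will be the exceptional cases (5) and (6): one must carefully identify the correct Borel (the statement depends on this choice, even though the notion of principal block does not) and accurately enumerate both $\Lambda_{\mathrm{sph}}$ and the set of principal-block weights for $G(1|2)$ and $F(1|3)$. Once these two finite sets of data are written down in the same basis, the verification that $\Lambda_{\mathrm{sph}}\setminus\{0\}$ misses the principal block is a short explicit check.
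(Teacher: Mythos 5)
Your setup matches the paper's: an Iwasawa Borel $\b$ containing $\a\oplus\n$ (for a Cartan subspace $\a\sub\p$), and the observation that left $B$-semiinvariance forces a $\b$-eigenweight $\lambda$ of a function in $\C[G/K]$ to vanish on $\h\cap\k$, hence to lie in $\a^*$ (the paper gets this from $(tf)(eK)=\lambda(t)f(eK)=0$). But at the crucial step the two arguments diverge, and the divergence is a real gap in the proposal.

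The paper does \emph{not} try to enumerate the principal-block weights and intersect them with a spherical lattice. Instead it applies the Casimir: one has $Cf_\lambda=(\lambda+2\rho,\lambda)f_\lambda$ where $\rho=\tfrac12\sum_{\alpha\in\ol\Delta^+}m_\alpha\alpha$ is the \emph{super}-$\rho$ of the restricted root system (with superdimension multiplicities $m_\alpha=\sdim\g_\alpha$). If the form is definite on $\mathbb Z\ol\Delta$, if $\rho=\sum c_i\alpha_i$ with all $c_i\geq0$, and if each simple restricted root has a root $\s\l(2)$ in its even part (so that dominance gives $(\lambda,\alpha_i)\geq0$), then $(\lambda+2\rho,\lambda)>0$ for $\lambda\neq0$. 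Since the Casimir acts by $0$ on the principal block, positivity immediately rules out membership. The case-by-case work reduces to computing $\rho$ in the restricted root system and checking the sign of its coefficients, which the paper does for each of (3), (5), (6).

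Your alternative — compute $\Lambda_{\mathrm{sph}}$, describe the principal block via linkage, and compare — is not wrong in principle, but it leaves the hard part undone. For $G(1|2)$ and $F(1|3)$ the explicit set of dominant principal-block weights is not a short congruence condition; defect-one atypicality gives a hyperplane condition $(\lambda+\rho,\alpha)=0$ for an isotropic $\alpha$, and the block is the equivalence class generated by this plus even linkage, which must then be intersected with your spherical lattice. You have not written down either set, and this is precisely where the real content of the proof lives. Moreover, the claim that one \emph{can} read off $\Lambda_{\mathrm{sph}}$ "from the $\theta$-action on the $\varepsilon/\delta$ basis" glosses over the fact that the relevant data is the \emph{super} multiplicity $m_\alpha=\sdim\g_\alpha$, which controls both the $\rho$-shift and the direction of the argument; a purely even-lattice bookkeeping would miss the sign phenomena that make, e.g., $(\o\s\p(2m|2n),\o\s\p(2m|2n-2)\times\s\p(2))$ work only for $n>m$. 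The Casimir route packages all of this into a single positivity check and is the missing idea.
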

	\begin{proof}
		See Section \ref{section_casimir}.
	\end{proof}

	\begin{prop}
		For $K\sub G$ listed in (3), (5), (6), $\C$ splits off $\C[G/K]$.
	\end{prop}
	\begin{proof}
		Write $W$ for the summand of $\C[G/K]$ in the principal block.  Then by Lemma \ref{lemma on dom wts}, $W$ must be indecomposable with socle $\C$.  If $\C$ does not split off, we may choose a submodule $V$ of $W$ given by a nonsplit short exact sequence $0\to\C\to V\to L\to 0$.  But then $V$ must be a highest weight module because for all groups in cases (3) (5), and (6), $0$ is a minimal dominant weight. But by assumption the highest weight of $V$ will not lie in $\a^*$, a contradiction.  Therefore $\C$ must split off.
	\end{proof}

	\subsection{Action of the Casimir}\label{section_casimir}  For cases (3), (5), and (6) we will show that the action of the Casimir is enough for proving that for $\lambda\neq0$ as in Lemma \ref{lemma on dom wts}, $\lambda$ does not lie in the principal block, thus proving Lemma \ref{lemma on dom wts}.
	
	Let $G/K$ be a supersymmetric space and write $(\g,\k)$ for the corresponding supersymmetric pair from an involution $\theta$ of $\g$.  We have a decomposition $\g=\k\oplus\p$, where $\p$ is the $(-1)$-eigenspace of $\theta$.  Choose a Cartan subspace $\a\sub\p$.  We may decompose $\g$ into the weight spaces of the action of $\a$, and we write $\ol{\Delta}$ for the nonzero weights of this action, which are exactly the restrictions of roots from $\h^*$ to $\a^*$ for a $\theta$-stable Cartan subalgebra $\h$ containing $\a$.  We refer to $\ol{\Delta}$ as the set of restricted roots for this supersymmetric pair.
	
	If we choose a positive set of restricted roots $\ol{\Delta}^+$, we may define
	\[
	\n:=\bigoplus\limits_{\alpha\in\ol{\Delta}^+}\g_{\alpha}.
	\]
	We say $(\g,\k)$ admits an Iwasawa decomposition if $\g=\k\oplus\a\oplus\n$ for some choice of $\n$ as above.   In this case we may choose a Borel subalgebra $\b$ containing $\a\oplus\n$.  We call such a Borel subalgebra an Iwasawa Borel subalgebra.
	
	\begin{lemma}
		Suppose that $(\g,\k)$ admits an Iwasawa decomposition $\g=\k\oplus\a\oplus\n$, and let $\b\supseteq\a\oplus\n$ be an Iwasawa Borel subalgebra.
		\begin{enumerate}
			\item The space of $\b$-eigenfunctions in $\C[G/K]$ of a given weight is at most one-dimensional, and non-zero $\b$-eigenfunction must be non-vanishing at $eK$.
			\item If there is a $\b$-eigenfunction of weight $\lambda$ appearing in $\C[G/K]$, then $\lambda\in\a^*$.
		\end{enumerate} 
	\end{lemma}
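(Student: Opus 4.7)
The plan is to exploit the open $B$-orbit through $eK \in G/K$ that the Iwasawa decomposition produces. First I would observe that since $\b \supseteq \a \oplus \n$ and $\g = \k \oplus \a \oplus \n$, we have $\g = \b + \k$; hence the differential at $e$ of the orbit morphism $\mu\colon B \to G/K$, $b \mapsto b \cdot eK$, is surjective. By the super inverse function theorem the image of $\mu$ contains an open sub-supervariety $U \ni eK$, and since $G/K$ is smooth and irreducible, $U$ is dense, so restriction gives an injection $\C[G/K] \hookrightarrow \C[U]$.

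Next, given a $\b$-eigenfunction $f \in \C[G/K]$ of weight $\lambda$, I would pull $f$ back along $\mu$ and invoke $\b$-semi-invariance to deduce the identity $\mu^*(f) = \chi_\lambda \cdot f(eK)$ in $\C[B]$, where $\chi_\lambda$ is the super-character of $B$ obtained by extending $\lambda$ by zero on the nilradical of $\b$. Since $\mu^*(f)$ is by construction $(B \cap K)$-invariant, $\chi_\lambda$ itself must be trivial on $B \cap K$, equivalently $\lambda|_{\b \cap \k} = 0$. As $\lambda$ already vanishes on the nilradical of $\b$, this reduces to $\lambda|_{\h \cap \k} = 0$; and since $\h$ is $\theta$-stable with $(-1)$-eigenspace $\a$, so that $\h = \a \oplus (\h \cap \k)$, this is precisely the condition $\lambda \in \a^*$, yielding (2). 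For (1), the same identity $\mu^*(f) = \chi_\lambda \cdot f(eK)$ shows that $f|_U$ is determined by the scalar $f(eK)$, and by density of $U$ so is $f$ itself; hence the eigenspace of weight $\lambda$ is at most one-dimensional, and any nonzero eigenfunction is non-vanishing at $eK$.

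The hard part, as I see it, is formulating the ``open orbit'' argument cleanly in the supergeometry language, where $B$ does not act on $G/K$ as a point-set action but through comultiplication on structure sheaves. I would need to check that the super-submersion theorem really delivers a dense open $U \subseteq G/K$ whose functions inject into those of $G/K$, and that the pulled-back semi-invariance actually forces $f|_U$ to be a scalar multiple of $\chi_\lambda$. The odd directions are harmless in themselves --- odd elements of the nilradical of $\b$ act by nilpotent derivations, so the transformation rule integrates unambiguously in the super-formal neighborhood of $e \in B$ --- but the super-variant of ``$BK$ is Zariski open in $G$'' needs to be spelled out with some care.
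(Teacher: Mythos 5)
Your proof is correct and takes essentially the same route as the paper. The paper's argument is terser: for part (1) it simply invokes that $\b+\k=\g$ makes $\b$ act spherically on $G/K$ (i.e.\ your open-orbit argument), and for part (2) it evaluates the infinitesimal action at $eK$: for $t\in\h\cap\k$ one has $(tf)(eK)=0$ because $t$ stabilizes $eK$, while $(tf)(eK)=\lambda(t)f(eK)$ because $f$ is a $\b$-eigenfunction, and $f(eK)\neq 0$ by part (1) — giving $\lambda(t)=0$. Your reformulation via the character $\chi_\lambda$ being trivial on $B\cap K$ is the same computation packaged differently; the super-geometric subtleties you flag about the open orbit are real but standard, and the paper elides them by appealing to sphericity.
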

	\begin{proof}
		Because $\b+\k=\g$, $B$ has an open orbit on $G/K$, implying the first statement.  For the second statement, if $f\in\C[G/K]$ is a $\b$-eigenfunction of weight $\lambda\in\h^*$, then for $t\in\h\cap\k$ we have $0=(tf)(eK)=\lambda(t)f(eK)$.  On the other hand we have $\h=\t\oplus\a$, so this forces $\lambda\in\a^*$.
	\end{proof}

	\begin{lemma}\label{positivity} Assume that $(\g,\k)$ admits an Iwasawa decomposition such that
		\begin{enumerate}
			\item The restricted root system $\ol{\Delta}$ is an indecomposable even root system;
			\item Invariant form is definite on the root lattice $\mathbb Z\ol{\Delta}\subset\a^*$;
			\item If $m_\alpha=\sdim(\g_{\alpha})$, $\alpha_1\dots\alpha_k\in \ol{\Delta}^+$ are simple roots, $\rho=\frac{1}{2}\sum_{\alpha\in\ol{\Delta}^+} m_{\alpha}\alpha$, then $\rho=\sum c_i\alpha_i$ with $c_i\geq 0$ for all $i$;
			\item For every simple root $\alpha_i$ the root space $\g_{\alpha_i}$ has a non-trivial even part.
		\end{enumerate}
		Then the Casimir element $C$ acts with positive eigenvalue on any $\b$-eigenfunction $f_\lambda$ of weight $\lambda\neq 0$.
	\end{lemma}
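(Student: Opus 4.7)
The plan is to identify the Casimir eigenvalue on $f_\lambda$ with $(\lambda,\lambda+2\rho)$ and then to show this scalar is positive by combining the four hypotheses. Since $f_\lambda$ is annihilated by $\n$ and carries $\b$-weight $\lambda$, it is a highest-weight vector with respect to $\b$ in the $\g$-submodule it generates, so the quadratic Casimir $C$ of $\g$ acts on $f_\lambda$ by the familiar scalar $(\lambda,\lambda+2\rho_{\g})$, where $\rho_{\g}$ is the usual super half-sum of positive $\g$-roots. The preceding lemma gives $\lambda\in\a^*$, so $\lambda$ vanishes on $\h\cap\k$ and $(\lambda,\rho_{\g})=(\lambda,\rho_{\g}|_{\a})$. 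Collecting positive $\g$-roots by their restriction to $\a^*$ (the ones restricting to $0$ lie in $(\h\cap\k)^*$ and drop out) then identifies $\rho_{\g}|_{\a}$ with the restricted $\rho=\tfrac{1}{2}\sum_{\alpha\in\ol{\Delta}^+}m_\alpha\alpha$ of the statement, since by definition $m_\alpha=\sdim\g_\alpha$ aggregates the signs of individual $\g$-root super-dimensions.

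Next I would argue that $\lambda$ is a dominant integral weight of $\ol{\Delta}$. Because $G$ is quasireductive, $\C[G/K]$ is locally finite as a $G$-module, so $f_\lambda$ lives in a finite-dimensional submodule whose $\b$-highest weight is $\lambda$; consequently $\lambda$ must be dominant integral. By hypothesis~(4), each simple restricted root $\alpha_i$ admits an even root vector in $\g_{\alpha_i}$, hence $\alpha_i$ is itself an even root; combined with hypothesis~(2) this gives $(\alpha_i,\alpha_i)>0$. Therefore
\[
(\lambda,\alpha_i) \;=\; \tfrac{1}{2}(\alpha_i,\alpha_i)(\lambda,\alpha_i^\vee) \;\geq\; 0
\]
for every $i$, since $(\lambda,\alpha_i^\vee)\in\Z_{\geq 0}$ by dominance.

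To conclude, hypothesis~(3) gives $(\lambda,2\rho)=2\sum_i c_i(\lambda,\alpha_i)\geq 0$ because all $c_i\geq 0$, while hypotheses~(1)--(2) together make the invariant form positive definite on the real span of $\ol{\Delta}$ (indecomposability plus definiteness forces the Cartan matrix of $\ol{\Delta}$ to be invertible, so dominant integral $\lambda$ lies in the rational span of the simple restricted roots). Hence $(\lambda,\lambda)>0$ whenever $\lambda\neq 0$, and summing the two non-negative contributions yields $(\lambda,\lambda+2\rho)>0$, as required.

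The step I expect to demand the most care is the bookkeeping in the first paragraph: one must verify that $\g$-roots restricting to zero on $\a$ contribute nothing to $(\lambda,\rho_{\g})$ (they live in $(\h\cap\k)^*$, on which $\lambda$ vanishes) and that the signed tallies of $\g$-root super-dimensions fibering over each $\bar\alpha\in\ol{\Delta}^+$ genuinely repackage into $m_{\bar\alpha}=\sdim\g_{\bar\alpha}$. Once this identification is in hand, the positivity argument is essentially a bookkeeping-free assembly of the four hypotheses.
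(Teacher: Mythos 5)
Your proposal is correct and follows essentially the same route as the paper: identify the Casimir eigenvalue with $(\lambda,\lambda+2\rho)$, use hypothesis (4) to get an $\mathfrak{sl}(2)$ (or at least an even root) for each simple restricted root and hence $(\lambda,\alpha_i)\geq 0$ by dominance, and then combine with hypothesis (3) to make the $2\rho$ contribution non-negative and with hypothesis (2) to make $(\lambda,\lambda)$ strictly positive. The paper's own proof is a terse sketch (it even contains a parenthetical IOU for the Casimir computation), and your paragraph explaining how $\rho_{\g}$ restricted to $\a^*$ repackages into $\rho=\tfrac12\sum m_\alpha\alpha$ is precisely the bookkeeping the paper elides.

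One small point worth tightening: your claim that a dominant integral $\lambda$ automatically lies in the rational span of the simple restricted roots is slightly off as stated (dominant integral weights can have a central component outside the root lattice in general). The cleaner argument is that $\lambda\in\a^*$ by the preceding lemma, $\ol{\Delta}$ spans $\a^*$ since $\a$ is a Cartan subspace, and hypothesis (2) then forces the form to be definite on all of $\a^*$, so $(\lambda,\lambda)>0$ for $\lambda\neq0$. This is also what the paper's terser argument implicitly uses.
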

	\begin{proof} One can compute that $C f_\lambda=(\lambda+2\rho,\lambda)f_\lambda$. The condition (4) implies the existence of a root $\mathfrak{sl}(2)$ subalgebra associated with every simple
		root $\alpha_i$. Therefore the dominance condition on $\lambda$ implies $(\lambda,\alpha_i)\geq 0$. Therefore we have
		\[
		(\lambda+2\rho,\lambda)=(\lambda,\lambda)+2\sum c_i(\lambda,\alpha_i)>0.
		\]
	\end{proof}

We now use Lemma \ref{positivity} to prove Lemma \ref{lemma on dom wts}. 
	\begin{itemize}
		\item $(\o\s\p(2m|2n),\o\s\p(2m|2n-2)\times\s\p(2))$, $n>m$: here $\a$ is one-dimensional and our restricted root system is $BC_1$.  If we write $\alpha$ for the positive long root, we obtain that
		\[
		\rho=(n-m-1)\alpha,
		\]
		which is a non-negative multiple of $\alpha$ by assumption.  
				
		\item $(\g(1|2),D(1,2;3))$:  Here $\a$ is given by the Cartan subalgebra of $\g_2$, and thus the form is definite on $\a^*$.  If we write $\alpha_1,\alpha_2$ for the simple roots of $\g_2$ where
		\[
		\frac{(\alpha_1,\alpha_1)}{(\alpha_2,\alpha_2)}=3,
		\]
		we obtain that
		\[
		\rho=\alpha_1+\alpha_2.
		\]
		\item $(F(3|1),D(1,2;2)\times\s\l(2))$: Here the Cartan subspace is given by the Cartan subalgebra of $\s\o(7)$, and thus the form is definite on $\a^*$.  If we write $\alpha_1,\alpha_2,\alpha_3$ for the simple roots of the restricted root system where
		\[
		\frac{(\alpha_1,\alpha_1)}{(\alpha_2,\alpha_2)}=3/8, \ \ \ \frac{(\alpha_2,\alpha_2)}{(\alpha_3,\alpha_3)}=2,
		\]
		we obtain that
		\[
		\rho=\alpha_1+2\alpha_2+3\alpha_3.
		\]
	\end{itemize}
	
	\subsection{$SOSp(2|2)\times SO(2)\sub D(2,1;\alpha)$}\label{section_d(2,1;alpha)}
	
	For this subgroup we will need to study the supersymmetric pair $(D(2,1;\alpha),\o\s\p(2|2)\times\s\o(2))$ more closely.  Present the root system of $D(2,1;\alpha)$ with basis $\epsilon_1,\epsilon_2,\epsilon_3$, even roots $\pm2\epsilon_i$, and odd roots $\pm\epsilon_1\pm\epsilon_2\pm\epsilon_3$.  Then for this supersymmetric pair we have $\a^*$ spanned by $\epsilon_1,\epsilon_2$, and we may take as simple root system $\epsilon_1-\epsilon_2-\epsilon_3,2\epsilon_2,2\epsilon_3$.  
	
	Then by \cite{Ger}, the dominant weights in the principal block, with respect to this positive system, are those of the form $\lambda_l=(l+1)\epsilon_1+(l-1)(\epsilon_2+\epsilon_3)$ for $l\in\Z^+$, and $\lambda_0=(0,0,0)$.  Thus the only $\lambda_l$ which lie in $\a^*$ are $\lambda_0$ and $\lambda_1$; further, by \cite{Ger}, the extension graph of the principal block is $D_{\infty}$, and looks as follows:
	\[
	\xymatrix{ \lambda_0 \ar@{-}[dr]&  & & \\
	 & \lambda_2\ar@{-}[r] & \lambda_3\ar@{-}[r] & \cdots\\
 \lambda_1 \ar@{-}[ur] & & & }
	\] 
	In particular we have $\Ext^1(L(\lambda_1),L(\lambda_0))=0$.  
	
	Now let $W$ denote the summand of $\C[G/K]$ with trivial central character.  Suppose that $W$ is not semisimple; then consider its simple composition factors; from the extension graph we see that necessarily $L(\lambda_2)$ must appear.  However this means that the highest weight of $W$ is at least $\lambda_2$, which is larger than both $\lambda_1$ and $\lambda_0$.  Since $\lambda_i\notin\a^*$ for $i>1$, we obtain a contradiction.  Thus instead $W$ must be semisimple.
	
	\section {Volumes of supergrassmannians}
	
	\subsection{Invariants of representations of $\q(1)$}\label{section_5_alpha} In this section the ground field is $\mathbb R$.
	Consider a Lie superalgebra $\q$, defined over $\mathbb R$, with basis $\{Q,Q^2\}$ where $Q$ is an odd element.
	\begin{lemma}\label{linalgebra1} Let $V$ be a $\q$-module such that $Q:V\to V$ is an isomorphism and $Q^2:V\to V$ is a compact linear operator (the closure of the subgroup generated by $\operatorname{exp}Q^2$ is compact in $GL(V_{\ol{0}})\times GL(V_{\ol{1}})$).  Then the following hold:
		\begin{enumerate}
			\item $\dim V_{\ol{0}}=\dim V_{\ol{1}}=2n$ for some positive integer $n$;
			\item Every $Q^2$-invariant nondegenerate symmetric bilinear form $B_0$ on $V_{\ol{0}}$ extends uniquely to a bilinear supersymmetric $\q$-invariant form $B$ on $V$.        
		\end{enumerate}
	\end{lemma}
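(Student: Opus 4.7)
The plan is to analyze $Q$ via its graded components. Set $\psi := Q|_{V_{\ol{0}}} : V_{\ol{0}} \to V_{\ol{1}}$ and $\phi := Q|_{V_{\ol{1}}} : V_{\ol{1}} \to V_{\ol{0}}$, both linear isomorphisms by hypothesis, whose compositions recover $Q^2$ on each graded piece. For (1), the iso $\psi$ immediately yields $\dim V_{\ol{0}} = \dim V_{\ol{1}}$. The compactness of the closure of $\{\exp(tQ^2)\}$ inside $GL(V_{\ol{0}}) \times GL(V_{\ol{1}})$ means that $Q^2$ is conjugate on each graded piece to a real skew-symmetric operator; in particular it is semisimple over $\C$ with purely imaginary spectrum. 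Invertibility of $Q$ (hence of $Q^2$) forces all eigenvalues to be nonzero, and hence to come in complex-conjugate pairs $\pm i\lambda_j$ with $\lambda_j > 0$, each pair accounting for two real dimensions; this pins down $\dim V_{\ol{0}} = 2n$.

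For (2), I would derive the forced formula for $B$ from graded $Q$-invariance and then verify it. The even-form convention forces $B$ to vanish on mixed-parity components. Applying graded invariance to $u \in V_{\ol{0}}$ and $x \in V_{\ol{1}}$ gives $B(\psi u, x) = -B_0(u, \phi x)$; substituting $x = \psi v$ with $v \in V_{\ol{0}}$ yields
\[
B(\psi u, \psi v) = -B_0(u, Q^2 v), \qquad u, v \in V_{\ol{0}},
\]
which, since $\psi$ is an iso, determines $B|_{V_{\ol{1}} \times V_{\ol{1}}}$ completely and gives uniqueness. Existence reduces to showing that this defining formula produces a form of the required symmetry type that is $\q$-invariant and nondegenerate.

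The only subtle step is the consistency check. The symmetry requirement on $V_{\ol{1}}$ (skew, in the supersymmetric convention) amounts to the identity $-B_0(u, Q^2 v) = B_0(v, Q^2 u)$, which follows by combining the symmetry of $B_0$ with the skew-adjointness of $Q^2$ (i.e.\ its $B_0$-invariance). The $Q^2$-invariance of $B$ and the remaining parity cases of $Q$-invariance reduce to those same two identities by a short direct manipulation. Nondegeneracy of $B$ on the odd part is inherited from that of $B_0$ via the isomorphisms $\psi$ and $Q^2$. I do not anticipate any obstacle beyond careful bookkeeping of the graded sign conventions.
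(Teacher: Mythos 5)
Your proof is correct and follows the same route as the paper: derive the forced extension formula $B(Qu,Qv)=B_0(Q^2u,v)$ (equivalently your $-B_0(u,Q^2v)$, the two being identical by $Q^2$-invariance of $B_0$) from graded $\q$-invariance, then verify that it yields a supersymmetric, invariant, nondegenerate form, with part (1) coming from $Q^2$ being a compact invertible operator and hence conjugate to a nondegenerate real skew-symmetric matrix. The paper states the same argument more tersely, leaving the consistency checks you spell out as implicit.
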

	\begin{proof}
		(1) follows from the fact that $Q^2$ defines a compact isomorphism on both $V_{\ol{0}}$ and $V_{\ol{1}}$.  For (2), observe that given a $Q^2$-invariant nondenegerate supersymmetric bilinear form $B_0$ on $V_{\ol{0}}$, for $v,w\in V_{\ol{0}}$ we must have $B(Qv,Qw)=B(Q^2v,w)$.   This indeed defines a form with the desired properties, and uniqueness also follows.
	\end{proof}
	Let us fix a volume form $\omega$ on $V$ and orientation $\omega_{\ol{0}}$ on $V_{\ol{0}}$. Let $B$ be a $\q$-invariant even symmetric bilinear form.
	Choose bases $e_1,\dots,e_{2n}\in V_{\ol{0}}$ and $f_1,\dots,f_{2n}\in V_{\ol{1}}$ such that
	\[
	\omega (e_1,\dots,e_{2n},f_1,\dots,f_{2n})=1, \omega_{\ol{0}}(e_1,\dots,e_n)>0.
	\]
	Set 
	\[
	\alpha(V,B,\omega):=(-\mathbf{i})^n\exp\left(\frac{\pi \mathbf{i} (\dim V_{\ol{0}}^+-\dim V_{\ol{0}}^-)}{4}\right)\frac{\operatorname{Pf}(B_1)}{\sqrt{|\det B_0|}},
	\]
	where $\dim V_{\ol{0}}^+-\dim V_{\ol{0}}^-$ is the signature of $B_0$, and $\operatorname{Pf}(B_1)$ is computed with respect to our chosen basis.
	
	\begin{lemma}\label{linalgebra2} If $V$ satisfies the conditions of Lemma \ref{linalgebra1} then $\alpha(V,B,\omega)$ does not depend on a choice of $B$.
	\end{lemma}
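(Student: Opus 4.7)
The plan is to exploit the compactness of $Q^2$ to put $V_{\ol{0}}$ in a normal form, classify the admissible $B$'s in that form, and then compute $\alpha$ directly to exhibit a cancellation that removes all dependence on $B$.

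First I would use the compactness hypothesis: since the closure of $\exp(\mathbb{R}\cdot Q^2)$ in $GL(V_{\ol{0}})$ is compact, there exists a $Q^2$-invariant positive-definite form on $V_{\ol{0}}$, with respect to which $Q^2$ is skew-symmetric. Consequently $V_{\ol{0}}$ decomposes as an orthogonal direct sum of two-dimensional $Q^2$-invariant subspaces $V_j$, $j = 1,\ldots,n$, each carrying a basis in which $Q^2 = \left(\begin{smallmatrix} 0 & t_j \\ -t_j & 0 \end{smallmatrix}\right)$ with $t_j \neq 0$ (non-vanishing because $Q$, hence $Q^2$, is invertible). This also recovers the evenness $\dim V_{\ol{0}} = 2n$ from Lemma \ref{linalgebra1}(1).

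Next I would classify the admissible $B_0$. A direct calculation on a single block shows that every $Q^2$-invariant symmetric bilinear form on $V_j$ is forced to be of the form $a_j I_2$ with $a_j \in \mathbb{R}^\times$. When the $|t_j|$ are pairwise distinct, different blocks are automatically $B_0$-orthogonal, since they are eigenspaces of the $B_0$-skew operator $Q^2$ for distinct eigenvalues; hence admissible $B_0$ are parametrized by tuples $(a_1,\ldots,a_n) \in (\mathbb{R}^\times)^n$. The degenerate case with coinciding $|t_j|$ reduces to the generic one by continuity of $\alpha$ in $(B,Q^2)$.

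Then I would compute each ingredient of $\alpha$ in the adapted basis $\{e_{j,i}\}$ for $V_{\ol{0}}$ together with the induced basis $f_{j,i} := Q e_{j,i}$ of $V_{\ol{1}}$. The $\q$-invariance identity $B_1(Qv,Qw) = -B_0(v, Q^2 w)$ makes $B_1$ block-diagonal skew with $\operatorname{Pf}(B_1) = (-1)^n \prod_j t_j a_j$, while $\sqrt{|\det B_0|} = \prod_j |a_j|$ and $\dim V_{\ol{0}}^+ - \dim V_{\ol{0}}^- = 2\sum_j \sgn(a_j)$. Substituting into the definition and using the crucial identity
\[
\mathbf{i}^{\sgn(a_j)} \cdot \sgn(a_j) = \mathbf{i} \quad\text{for both } \sgn(a_j) = \pm 1,
\]
all $a_j$-dependence collapses, yielding $\alpha(V,B,\omega) = (-1)^n \prod_j t_j$, manifestly independent of $B$. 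Any rescaling required to match the fixed volume form $\omega$ and orientation $\omega_{\ol{0}}$ is harmless because it changes $\operatorname{Pf}(B_1)$ and $\sqrt{|\det B_0|}$ in compensating ways.

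The main obstacle will be the sign and phase bookkeeping in the last step: the basis-dependent sign of the Pfaffian, the phase $\exp(\pi\mathbf{i}(\dim V_{\ol{0}}^+ - \dim V_{\ol{0}}^-)/4)$, and the absolute value inside $\sqrt{|\det B_0|}$ must conspire precisely so that the identity above applies. This is exactly what the normalizing prefactors $(-\mathbf{i})^n$ and $\exp(\pi\mathbf{i}\sigma/4)$ in the definition of $\alpha$ are designed to achieve, and it is the content of the lemma that they do.
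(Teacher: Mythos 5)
Your approach is genuinely different from the paper's. You fix $B$ and normalize $Q^2$ into $2\times 2$ rotation blocks, classify the admissible $B_0$ in that basis, and then compute $\operatorname{Pf}(B_1)$, $\sqrt{|\det B_0|}$ and the signature explicitly to exhibit the cancellation $\alpha = (-1)^n\prod_j t_j$. The paper instead compares two invariant forms $B$ and $B'$ directly: $T=(B_0)^{-1}B_0'$ commutes with $Q^2$ and is $B_0$-symmetric, so $V$ decomposes into $Q$-stable pieces $V^i=\ker(B'-t_iB)$ orthogonal for both forms; then one uses multiplicativity of $\alpha$ over such a decomposition and the one-line check that $\alpha(V^i,B^i,\omega^i)=\alpha(V^i,t_iB^i,\omega^i)$. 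The paper's route is more economical (no Pfaffian needed beyond the scaling behavior) and, importantly, involves no case distinction on the spectrum of $Q^2$.

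The step in your argument that does not work as stated is the reduction of the degenerate case (coinciding $|t_j|$) to the generic one ``by continuity of $\alpha$ in $(B,Q^2)$.'' Two problems: first, the space of $Q^2$-invariant even symmetric forms has larger dimension when $Q^2$ is degenerate and shrinks under a generic perturbation of $Q^2$, so a given invariant $B$ for the degenerate $Q^2$ need not be approximable by $Q_\epsilon^2$-invariant forms; second, even with $Q^2$ fixed, the nondegenerate invariant forms fall into several connected components (distinguished by the signature of $B_0$), so continuity alone cannot transport the conclusion between a positive-definite $B_0$ and an indefinite $B_0'$. Your observation that $B_0$ is forced to be $a_jI_2$ on each block, and that distinct blocks are automatically $B_0$-orthogonal, does use genericity of the $|t_j|$ (you are appealing to eigenspaces of the $B_0$-selfadjoint operator $(Q^2)^2$). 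The gap is fillable: for an arbitrary invariant $B_0$, pick a $Q^2$-invariant positive-definite reference form $B_0^{\mathrm{ref}}$ (which exists by compactness), note that $(B_0^{\mathrm{ref}})^{-1}B_0$ commutes with $Q^2$ and is $B_0^{\mathrm{ref}}$-symmetric, and use its eigenspaces to produce a $Q^2$-adapted block basis in which $B_0=\operatorname{diag}(a_1I_2,\dots,a_nI_2)$ in all cases. But notice this fix is essentially the paper's argument of comparing two forms simultaneously; having done it, the explicit Pfaffian computation becomes unnecessary and one may as well invoke multiplicativity plus scaling invariance. Finally, the closing remark that the normalization required to match $\omega$ and the orientation $\omega_{\ol 0}$ is ``harmless'' deserves to be spelled out, since reversing the orientation of a single block flips the sign of $t_j$ and of $\operatorname{Pf}(B_1)$; it works out, but the sign bookkeeping is not automatic.
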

	\begin{proof} Let $B$ be some invariant form such that $B_0$ is positive definite and let $B'$ be some other invariant form. Then there is a $B_0$-orthonormal  basis $v_1,\dots,v_{2n}\in V_{\ol{0}}$ which is also orthogonal with respect to $B'_0$.
		Since $B'-tB$ is $\g$-invariant for any $t\in \mathbb R$ there is a decomposition $V=V^1\oplus\dots\oplus V^s$ orthogonal with respect to both forms $B$ and $B'$, where $V^i=\ker (B'-t_iB)$ where $t_1,\dots,t_s$ are eigenvalues of
		$B'_0$. Since each $V^i$ satisfies the assumption of Lemma \ref{linalgebra1} we have $\dim V^i=(2n_i|2n_i)$ with $n_1+\dots+n_s=n$. Let $B^i$ denote the restriction of $B$ to $V^i$ and $\omega^i$ a volume form on $V^i$ such that
		$\omega=\prod_{i=1}^s\omega^i$. Then by direct inspection $\alpha(V^i,B^i,\omega^i)=\alpha(V^i,t_iB^i,\omega^i)$ and therefore
		$$\alpha(V,B,\omega)=\prod_{i=1}^s\alpha(V^i,B^i,\omega^i)=\prod_{i=1}^s\alpha(V^i,t_iB^i,\omega^i)=\alpha(V,B',\omega).$$
	\end{proof}
	Since the coefficient $\alpha(V,B,\omega)$ does not depend on the choice of $B$ we just write $\alpha(V,\omega)$. The following lemma gives a practical way to compute $\alpha(V,\omega)$.
	\begin{lemma}\label{linalgebra3} Let $V$ satisfy the conditions of Lemma \ref{linalgebra1}. Let $e_1,\dots,e_{2n}\in V_{\ol{0}}$ be a positively oriented basis such that the matrix of $Q_0^2$ is skew-symmetric in this basis.
		(Such basis exists by compactness of $Q^2$.)
		For any basis $f_1,\dots,f_{2n}$ of $V_{\ol{1}}$ such that $\omega (e_1,\dots,e_{2n},f_1,\dots,f_{2n})=1$, the matrix  $\left(\begin{matrix}0&Q_{01}\\ Q_{10}&0\end{matrix}\right )$ of $Q$ satisfies the property
		$Q^t_{01}Q_{10}$ is skew-symmetric and $$\alpha(V,\omega)=\operatorname{Pf}(Q^t_{01}Q_{10}^{-1})$$
	\end{lemma}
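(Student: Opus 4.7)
The plan is to exploit Lemma \ref{linalgebra2}, which gives the freedom to pick any convenient invariant form $B$ on $V$. Because the matrix of $Q_0^2$ is skew-symmetric in the basis $e_1,\ldots,e_{2n}$, the standard Euclidean form $B_0 = I$ on $V_{\ol{0}}$ is automatically $Q^2$-invariant, and by Lemma \ref{linalgebra1} it extends uniquely to a $\mathfrak{q}$-invariant form $B$. Taking this $B$, one has $|\det B_0| = 1$ and signature $2n$, so the $(-\mathbf{i})^n$ prefactor in the definition of $\alpha$ exactly cancels the phase $\exp(\pi \mathbf{i} n/2) = \mathbf{i}^n$. Consequently the formula collapses to $\alpha(V,\omega) = \operatorname{Pf}(B_1)$.

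It remains to express $B_1$ in the basis $f_1,\ldots,f_{2n}$ in terms of $Q_{01}$ and $Q_{10}$. The $\mathfrak{q}$-invariance of $B$ gives, for $v \in V_{\ol{0}}$ and $w \in V_{\ol{1}}$, the identity $B(Qv, w) = -B(v,Qw)$, which in our bases becomes the matrix equation $Q_{10}^t B_1 = -B_0 Q_{01} = -Q_{01}$. Solving gives $B_1 = -(Q_{10}^t)^{-1} Q_{01}$, and a short rearrangement using the skew-symmetry of $Q_0^2 = Q_{01}Q_{10}$ (multiplying $Q_{10}^t Q_{01}^t = -Q_{01}Q_{10}$ on the left by $(Q_{10}^t)^{-1}$ and on the right by $Q_{10}^{-1}$) rewrites this as $B_1 = Q_{01}^t Q_{10}^{-1}$; the same identity forces $B_1$ to be skew-symmetric, so its Pfaffian is well-defined. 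Substituting yields the claimed formula $\alpha(V,\omega) = \operatorname{Pf}(Q_{01}^t Q_{10}^{-1})$.

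The argument is essentially linear-algebraic bookkeeping once the convenient choice $B_0 = I$ is made, so no serious obstacle arises. The only care required is in correctly translating the invariance relation for $B$ into a matrix identity — in particular, keeping track of which block of $Q$ acts on which side and how the transpose on $B_1$ arises from the duality pairing. A secondary subtlety is verifying that the formula in the chosen bases is consistent with the normalization $\omega(e_1,\ldots,e_{2n},f_1,\ldots,f_{2n}) = 1$, but this amounts to observing that both sides of the claimed equality transform the same way under a volume-preserving change of the $f_i$-basis.
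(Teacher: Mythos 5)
Your proof is correct and follows essentially the same route as the paper: choose the invariant $B$ with $B_0=I$ (legitimized by Lemma \ref{linalgebra2}), observe that the $(-\mathbf{i})^n$ and signature phases cancel so $\alpha(V,\omega)=\operatorname{Pf}(B_1)$, and extract $B_1$ from $\q$-invariance. The only small difference is that the paper reads off $B_1=Q_{01}^tQ_{10}^{-1}$ directly from $B(v,w)=B(Qv,Q^{-1}w)$ on odd vectors, whereas you derive $B_1=-(Q_{10}^t)^{-1}Q_{01}$ from the even--odd cross relation and then use skew-symmetry of $Q_0^2$ to rewrite it; both are valid and the skew-symmetry hypothesis is needed either way to conclude $B_1^t=-B_1$.
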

	\begin{proof} Choose an invariant form $B$ on $V$ such that $e_1,\dots,e_{2n}$ is an orthonormal basis with respect to $B_0$; then by skew symmetry of $Q_0^2$, $B_0$ will be $Q^2$-invariant, so may be extended uniquely to an invariant form $B$ on $V$ using Lemma \ref{linalgebra1}. For any $v,w\in V_{\ol{1}}$ we have
		\[
		B(v,w)=B(v,QQ^{-1}w)=B(Qv,Q^{-1}w).
		\]
		Therefore $B_1=Q^t_{01}Q_{10}^{-1}$. That implies that $B_1$ is skew-symmetric. Using the definition of $\alpha(V,B,\omega)$ and noticing that $B_0$ is the identity matrix we get the desired result. 
	\end{proof}
	
	\begin{cor}\label{complex} Let $V=\mathbb C^{n|n}$ with standard complex basis $u_1,\dots,u_n\in V_0, v_1,\dots,v_n\in V_1$, and real volume form
		\[
		\omega=\prod_{i=1}^n\frac{dz_id\bar z_i}{d\zeta_id\bar\zeta_i}.
		\]
		If the action of $Q$ in the standard basis is given by
		\[
		Qu_i=(1+\mathbf{i})d_iv_i,\quad Qv_i=(1+\mathbf{i})c_iv_i
		\]
		for $c_i,d_i\in\R$,	then $\alpha(V,\omega)=\prod_{i=1}^n\frac{c_i}{d_i}$.
	\end{cor}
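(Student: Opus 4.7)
The plan is to apply Lemma \ref{linalgebra3} directly, with a careful choice of real basis adapted to the complex structure. The key observation motivating the statement is that the factor $(1+\mathbf{i})$ has been inserted precisely so that $Q_0^2$, acting on the underlying real vector space, is block skew-symmetric in the natural real basis.

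First I would pass from the complex basis to a real basis by setting $e_{2i-1} = u_i$, $e_{2i} = \mathbf{i}u_i$ for $V_{\ol 0}$, and $f_{2i-1} = v_i$, $f_{2i} = \mathbf{i}v_i$ for $V_{\ol 1}$. A short calculation gives
\[
Q^2 u_i \;=\; (1+\mathbf{i})^2 c_i d_i\, u_i \;=\; 2\mathbf{i}\, c_i d_i\, u_i,
\]
so in the real basis $(u_i,\mathbf{i}u_i)$ the $i$-th $2{\times}2$ block of $Q_0^2$ is $\left(\begin{smallmatrix} 0 & -2c_id_i \\ 2c_id_i & 0 \end{smallmatrix}\right)$, which is skew-symmetric (and compact). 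Thus the basis $\{e_j\}$ satisfies the hypothesis of Lemma \ref{linalgebra3}, provided we check it is positively oriented and that $\omega(e_1,\dots,e_{2n},f_1,\dots,f_{2n})=1$; this is a matter of reading the conventions for the real volume form $\prod_i dz_i d\bar z_i/(d\zeta_i d\bar\zeta_i)$ in the real coordinates $z_i = x_i+\mathbf{i}y_i$, $\zeta_i = \xi_i + \mathbf{i}\eta_i$.

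Next I would compute the off-diagonal blocks of $Q$ in these bases. From $Q u_i = (1+\mathbf{i})d_i v_i = d_i(f_{2i-1}+f_{2i})$ and $Q(\mathbf{i}u_i) = \mathbf{i}(1+\mathbf{i})d_i v_i = d_i(-f_{2i-1}+f_{2i})$, the $i$-th block of $Q_{10}$ is $d_i M$ where $M=\left(\begin{smallmatrix} 1 & -1 \\ 1 & 1\end{smallmatrix}\right)$, and identically the $i$-th block of $Q_{01}$ is $c_i M$. A direct computation then gives
\[
M^{t} M^{-1} \;=\; \left(\begin{matrix} 0 & 1 \\ -1 & 0 \end{matrix}\right),
\]
so each $2{\times}2$ block of $Q_{01}^{t}Q_{10}^{-1}$ equals $(c_i/d_i)\left(\begin{smallmatrix} 0 & 1 \\ -1 & 0\end{smallmatrix}\right)$. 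In particular $Q_{01}^{t}Q_{10}^{-1}$ is block-diagonal and skew-symmetric (confirming that part of Lemma \ref{linalgebra3}), and its Pfaffian is the product of the block Pfaffians, giving $\prod_i c_i/d_i$.

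The main obstacle is not the linear algebra, which is all $2{\times}2$, but rather the bookkeeping for the volume form $\omega$: one must verify that the chosen ordering of the real basis $(e_1,\dots,e_{2n},f_1,\dots,f_{2n})$ is positively oriented on $V_{\ol 0}$ and normalized so that $\omega$ evaluates to $1$, taking into account the Berezinian conventions and the $\mathbf{i}$-factors arising from $dz\,d\bar z = -2\mathbf{i}\,dx\,dy$ (with a formally analogous identity for the odd coordinates that appears with opposite sign in the Berezinian). Once this normalization is settled, the identity $\alpha(V,\omega)=\operatorname{Pf}(Q_{01}^{t}Q_{10}^{-1})=\prod_i c_i/d_i$ follows from the block computation above.
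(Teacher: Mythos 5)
Your proof is correct and follows essentially the same route as the paper: both apply Lemma \ref{linalgebra3} to the realification $e_{2i-1}=u_i$, $e_{2i}=\mathbf{i}u_i$, $f_{2i-1}=v_i$, $f_{2i}=\mathbf{i}v_i$ and compute $\operatorname{Pf}(Q_{01}^{t}Q_{10}^{-1})$ block by block, the paper phrasing the same computation in complex form as $\bar Q_{01}^{t}Q_{10}^{-1}=-\operatorname{diag}(\mathbf{i}c_1/d_1,\dots,\mathbf{i}c_n/d_n)$ via the complex-to-real transpose rule. The normalization you defer does work out: the factor $-2\mathbf{i}$ from $dz_i\,d\bar z_i=-2\mathbf{i}\,dx_i\,dy_i$ cancels against $\partial_{\zeta_i}\wedge\partial_{\bar\zeta_i}=\tfrac{\mathbf{i}}{2}\,\partial_{\xi_i}\wedge\partial_{\eta_i}$ in the Berezinian, so $\omega(e_1,\dots,e_{2n},f_1,\dots,f_{2n})=1$ as Lemma \ref{linalgebra3} requires.
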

	\begin{proof} With respect to our complex basis,
		\[
		\ol{Q}^t_{01}Q_{10}^{-1}=-\operatorname{diag}(\frac{c_1}{d_1}\mathbf i,\dots, \frac{c_n}{d_n}\mathbf i).
		\]
			Here we recall that the transpose of a $n\times n$ complex matrix $A$ viewed as a real $2n\times 2n$-matrix is $\ol{A}^t$.  From here one easily checks that the Pfaffian of the corresponding real matrix is as desired.
	\end{proof}

	\subsection {Schwarz--Zaboronsky formula}  This section will deal with Berezinian integration on real supermanifolds.  We refer to \cite{M} for details.	
	
	Let $\mathcal M$ be a compact real supermanifold with underlying manifold $\MM_0$ and a volume form $\omega$. We fix an orientation on the underlying manifold $\MM_0$. Let $Q$ be a vector field on $\MM$.
	A point $p\in \MM$ is a zero of $Q$ if for any smooth function $f\in\mathcal C^{\infty}\MM$ we have $Q(f)\subset I_p$ where $I_p$ is the vanishing ideal of $p$. By $Z(Q)$ we denote the set of all zeros of $Q$.
	If $p\in Z(Q)$ then  $Q$ induces a linear operator on $T^*_p\MM=I_p/I_p^2$.
	We say that $p\in Z(Q)$ is isolated if $Q:T^*_p\MM\to T^*_p\MM$ is an isomorphism. Let us consider an odd vector field $Q$ on $\MM$ satisfying the following properties:
	\begin{enumerate}
		\item $Z(Q)$ is finite and consists of isolated zeros;
		\item $Q\omega=0$;
		\item $Q^2$ is a compact vector field, i.e.,there exists a compact Lie group $\mathcal K$ acting on $\MM$ such that $Q\in\operatorname{Lie}\mathcal K$.   
	\end{enumerate}
	Let us note that by Lemma \ref{linalgebra1} $\dim \MM=(2n|2n)$ for some $n\in\N$.   From \cite{SZ} we have:
	
	\begin{thm}\label{SZ} Let $\MM$ be a supermanifold of dimension $(2n|2n)$ with volume form $\omega$ and odd vector field $Q$ satisfy the above assumptions. Then there exists an odd function $\sigma$ such that 
		\begin{enumerate}
			\item $Q^2\sigma=0$;
			\item $Q\sigma\notin I_p$ for any $p\notin Z(Q)$; and
			\item  the Hessian of $Q\sigma$ at each point $p\in Z(Q)$ is nondegenerate. 
		\end{enumerate}
	Furthermore,
		$$           \int_{\MM}\omega=(2\pi)^{n}\sum_{p\in Z(Q)}\alpha(T_p\MM,\Hess_p(Q\sigma),\omega_p),$$
		where $\Hess_p(Q\sigma)$ is the Hessian of $Q\sigma$ at $p$, which will be an even symmetric $Q$-invariant form on $T_p\MM$.
	\end{thm}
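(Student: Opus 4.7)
The plan is to mimic the proof of the classical Duistermaat--Heckman/Atiyah--Bott localization theorem in the super setting. First I would construct the auxiliary odd function $\sigma$ satisfying (1)--(3); then I would introduce the one-parameter family
\[
I(t):=\int_\MM e^{-tQ\sigma}\,\omega, \qquad t\geq 0,
\]
show it is independent of $t$ via super-Stokes, and finally extract the sum over $Z(Q)$ by analyzing $t\to\infty$ using a super-Gaussian asymptotic expansion.

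For the construction of $\sigma$, the key input is that $Q^2\in\Lie\mathcal K$ for a compact Lie group $\mathcal K$ acting on $\MM$, and since $[Q^2,Q]=0$ the vector field $Q$ is itself $\mathcal K$-invariant. Locally near each $p\in Z(Q)$ one can pick $\mathcal K$-equivariant coordinates and write down a quadratic $\sigma_p$ such that $Q\sigma_p$ is a nondegenerate quadratic form built from the isomorphism $Q: T_p^*\MM\to T_p^*\MM$. Away from $Z(Q)$, use $Q$-straightening to produce $\sigma$ locally with $Q\sigma$ a nonzero constant. Glue these using a $\mathcal K$-invariant partition of unity subordinate to a cover of $\MM$ adapted to $Z(Q)$, and then $\mathcal K$-average to force (1) $Q^2\sigma=0$; generic choices in the gluing preserve the nondegeneracy at zeros and the nonvanishing of $Q\sigma$ on the complement of $Z(Q)$. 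This step is where I expect the main technical difficulty to lie.

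Granted such $\sigma$, since $Q$ is an odd derivation we get
\[
Q\bigl(e^{-tQ\sigma}\bigr)=-t(Q^2\sigma)\,e^{-tQ\sigma}=0,
\]
and therefore, using that $\sigma$ is odd,
\[
\frac{dI}{dt}=-\int_\MM(Q\sigma)\,e^{-tQ\sigma}\,\omega=-\int_\MM Q\bigl(\sigma\,e^{-tQ\sigma}\bigr)\,\omega=0,
\]
where the last equality is the super-Stokes theorem, valid because $Q\omega=0$ and $\MM$ is compact. Consequently $I(t)\equiv I(0)=\int_\MM\omega$.

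As $t\to\infty$, condition (2) forces $e^{-tQ\sigma}$ to concentrate in arbitrarily small neighborhoods of $Z(Q\sigma)=Z(Q)$. A $\mathcal K$-equivariant super-Morse lemma normalizes $Q\sigma$ in a chart around each $p\in Z(Q)$ to its quadratic part $\tfrac12\Hess_p(Q\sigma)$, and the resulting Gaussian Berezinian integral is computed exactly as in Lemma~\ref{linalgebra3} and Corollary~\ref{complex}: the even Gaussian contributes $(2\pi)^n/\sqrt{|\det B_0|}$ together with the signature phase $\exp\bigl(\pi\mathbf i(\dim V_{\ol 0}^+-\dim V_{\ol 0}^-)/4\bigr)$, while the odd Berezin Gaussian contributes $\operatorname{Pf}(B_1)$ and the prefactor $(-\mathbf i)^n$. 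These assemble to precisely $(2\pi)^n\,\alpha\bigl(T_p\MM,\Hess_p(Q\sigma),\omega_p\bigr)$. Summing over $p\in Z(Q)$ and equating with the constant value $\int_\MM\omega$ from the previous paragraph yields the claimed formula.
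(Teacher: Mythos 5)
This theorem is quoted in the paper from \cite{SZ} without proof (the paper says ``From \cite{SZ} we have:''), so there is no in-paper argument to compare against. Your proposal correctly identifies the standard Schwarz--Zaboronsky/Witten localization scheme (deform by a $Q$-exact term, show $t$-independence via super-Stokes, evaluate the $t\to\infty$ limit by Gaussian concentration at $Z(Q)$), and the $t$-independence computation you give is right. But the two places you flag as delicate are indeed where the proof lives, and as written both have real gaps.

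First, the construction of $\sigma$: gluing local $\sigma_p$ with a partition of unity can easily create new zeros of $Q\sigma$ away from $Z(Q)$ (on the overlaps the local contributions may cancel), and $\mathcal K$-averaging, while forcing $Q^2\sigma = 0$, does not obviously preserve nondegeneracy of $\Hess_p(Q\sigma)$ or nonvanishing of $Q\sigma$ off $Z(Q)$. The standard route — take $\sigma$ to be the contraction of $Q$ with a $\mathcal K$-invariant metric on the odd normal directions, so that the reduced function $(Q\sigma)|_{\MM_0}$ is a genuine ``$|Q|^2\geq 0$'' — is what actually delivers (2) and (3) simultaneously, and you do not produce this. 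Second, and more seriously, for $e^{-tQ\sigma}$ to concentrate at $Z(Q)$ as $t\to\infty$ you need $(Q\sigma)|_{\MM_0}\geq 0$; if $(Q\sigma)|_{\MM_0}$ changes sign, the damped exponential blows up and the integral is not controlled. With the nonnegative construction just described, $\Hess_p(Q\sigma)|_{\ol 0}$ is positive definite at each $p\in Z(Q)$, so the signature phase $\exp\bigl(\pi\mathbf i(\dim V_{\ol 0}^+-\dim V_{\ol 0}^-)/4\bigr)$ is trivial, and Lemma~\ref{linalgebra2} is then used to replace this particular $B$ by an arbitrary invariant form. Your proposal instead invokes the Fresnel signature phase as if it arose from an oscillatory integral $\int e^{\mathbf i tQ\sigma}$, which is inconsistent with the damped deformation $e^{-tQ\sigma}$ you actually wrote down. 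You need to commit to one of these two arguments (positive $Q\sigma$ with a real Gaussian, or a Wick-rotated/oscillatory deformation with a stationary-phase estimate adapted to Berezin integration) and carry it through; right now the proposal mixes them.
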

	It follows from Lemma \ref{linalgebra2} that the function $\sigma$ can be removed from the Schwarz--Zaboronsky formula:
	\begin{cor}\label{simp}
		\[   
		\int_{\MM}\omega=(2\pi)^{n}\sum_{p\in Z(Q)}\alpha(T_p\MM,\omega_p).
		\]
	\end{cor}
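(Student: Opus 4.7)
The plan is to deduce the corollary directly from Theorem \ref{SZ} by invoking the independence of $\alpha(V,B,\omega)$ on the choice of invariant form $B$ established in Lemma \ref{linalgebra2}. The Schwarz--Zaboronsky formula already expresses the integral as a sum of local contributions $\alpha(T_p\MM,\Hess_p(Q\sigma),\omega_p)$ at each $p\in Z(Q)$, so the only work is to eliminate the apparent dependence on the auxiliary function $\sigma$.

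First I would verify that each tangent space $T_p\MM$ at $p\in Z(Q)$ satisfies the hypotheses of Lemma \ref{linalgebra1}. Since $p$ is by assumption an isolated zero of $Q$, the induced map $Q\colon T^*_p\MM\to T^*_p\MM$ is an isomorphism, hence so is $Q\colon T_p\MM\to T_p\MM$. Since $Q^2$ is a compact vector field on $\MM$ (coming from the action of a compact Lie group $\mathcal{K}$ fixing $p$), the induced operator $Q^2$ on $T_p\MM$ sits inside the image of a compact subgroup of $GL((T_p\MM)_{\ol{0}})\times GL((T_p\MM)_{\ol{1}})$, so it is compact in the sense of Lemma \ref{linalgebra1}. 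This also forces $\dim T_p\MM=(2n_p\mid 2n_p)$, consistent with $\dim\MM=(2n\mid 2n)$.

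Next I would observe that $\Hess_p(Q\sigma)$ is precisely the sort of bilinear form to which Lemma \ref{linalgebra2} applies: by the statement of Theorem \ref{SZ} it is a nondegenerate even symmetric $Q$-invariant form on $T_p\MM$, and its even part is nondegenerate. Lemma \ref{linalgebra2} then asserts that for any tangent space $V=T_p\MM$ satisfying the assumptions of Lemma \ref{linalgebra1}, the scalar $\alpha(V,B,\omega_p)$ is the same for every choice of invariant form $B$. Applying this to $B=\Hess_p(Q\sigma)$ yields
\[
\alpha(T_p\MM,\Hess_p(Q\sigma),\omega_p)=\alpha(T_p\MM,\omega_p),
\]
and substituting into the formula of Theorem \ref{SZ} gives the desired simplification.

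The argument is essentially a bookkeeping step, so I do not anticipate any serious obstacle; the one point to keep in mind is the verification that $\Hess_p(Q\sigma)$ is genuinely a $Q$-invariant form (which follows from the identity $Q^2\sigma=0$ together with the Leibniz rule, since $Q^2$ annihilates $Q\sigma$ and hence its Hessian at a zero of $Q$), so that Lemma \ref{linalgebra2} is legitimately applicable.
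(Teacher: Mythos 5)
Your proof is correct and follows exactly the route the paper intends: the corollary is immediate from Theorem \ref{SZ} once Lemma \ref{linalgebra2} is invoked to remove the dependence on $B=\Hess_p(Q\sigma)$, and your verification that $T_p\MM$ satisfies the hypotheses of Lemma \ref{linalgebra1} (via isolatedness of $p$ and compactness of $Q^2$) is exactly the bookkeeping the paper leaves implicit. The only small imprecision is in your closing parenthetical: what you actually want is that $Q$ (not $Q^2$) annihilates $Q\sigma$ — indeed $Q(Q\sigma)=Q^2\sigma=0$ by property (1) of $\sigma$ — so that $Q\sigma$ is $Q$-invariant and hence its Hessian at a fixed point of $Q$ is a $Q$-invariant form; in any case this is already asserted in the statement of Theorem \ref{SZ}, so no extra argument is needed.
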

	\begin{cor}\label{zero} If $Z(Q)=\emptyset$ then $ \int_{\MM}\omega=0$.
	\end{cor}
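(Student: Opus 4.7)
The plan is simply to invoke Corollary \ref{simp}. The hypothesis $Z(Q)=\emptyset$ satisfies the finiteness and isolation assumptions of Theorem \ref{SZ} vacuously, while the standing assumptions that $Q\omega=0$ and that $Q^2$ is compact remain in force. Corollary \ref{simp} then reads
\[
\int_{\MM}\omega=(2\pi)^{n}\sum_{p\in Z(Q)}\alpha(T_p\MM,\omega_p),
\]
and the right-hand side is zero because the sum is indexed by the empty set.

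A self-contained alternative avoiding the full localization formula would parallel the proof of Lemma \ref{lem_orbits}. Since $Q$ is nowhere vanishing, one can rectify it locally along an odd coordinate direction, so near each point of $\MM_0$ there is an even function $f$ with $Qf=1$. Compactness of $\MM_0$ together with a super partition-of-unity argument then produces a global odd function $g$ with $Qg=1+\eta$, for some $Q^2$-invariant, even, nilpotent $\eta$; setting $\tilde g=g/(1+\eta)$ gives $Q\tilde g=1$. The condition $\mathcal{L}_Q\omega=0$ (which is what $Q\omega=0$ means) then yields
\[
\omega=(Q\tilde g)\omega=\mathcal{L}_Q(\tilde g\,\omega),
\]
and so $\int_{\MM}\omega=0$ by Stokes' theorem for Berezin integration on the compact supermanifold $\MM$.

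There is essentially no obstacle: the first route is immediate from the preceding corollary, and the second is a routine adaptation of the algebraic argument already given for Lemma \ref{lem_orbits}. The only step requiring mild care in the self-contained approach is the passage from local to global in the construction of $\tilde g$, which is standard for smooth compact supermanifolds.
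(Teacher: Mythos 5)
Your first route is exactly what the paper does: Corollary \ref{zero} is an immediate specialization of Corollary \ref{simp} to an empty index set, and the paper offers no separate proof, so this is correct and the same approach. Your self-contained alternative is in the spirit of Lemma \ref{lem_orbits} but has two slips worth flagging: since $Q$ is an odd derivation, a local solution of $Qf=1$ must be \emph{odd}, not even; and the passage from local to global is not a bare partition-of-unity argument, because one needs $\eta=Qg-1$ to satisfy $Q\eta=0$ before $\tilde g=g/(1+\eta)$ gives $Q\tilde g=1$ (otherwise $Q\tilde g=1-\tilde g\,Q\bigl((1+\eta)^{-1}\bigr)(1+\eta)$ has an extra term). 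In Lemma \ref{lem_orbits} this is secured by arranging $f$ to be $Q^{2}$-invariant, so that $Q\eta=Q^{2}f=0$; in the smooth compact setting you would have to average the construction over the compact closure of the flow of $Q^{2}$ (this is exactly where hypothesis (3) enters), rather than simply summing against a partition of unity.
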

	\subsection{Supergrassmannian}
	The supergrassmannian $Gr(r|s, m|n)$ is the superscheme representing the functor of $(r|s)$-dimensional subspaces in $\mathbb C^{m|n}$.  We refer to \cite{M} and \cite{PS} for its construction and properties.  
	
	If $G=GL(m|n)$,	$K=GL(r|s)\times GL(m-r|n-s)$ and $P\subset G$ a maximal parabolic subgroup of $G$ containing $K$ then the homogeneous space $G/P$ is isomorphic to $Gr(r|s, m|n)$. The underlying algebraic variety is a product of two classical Grassmannians
	$Gr(r|m)\times Gr(s|n)$. If $p\in Gr(r|s, m|n)$ is the point with stabilizer $P$ then
	the tangent space $T_pGr(r|s, m|n)$ can be identified with $\mathfrak g/\mathfrak p$. This immediately implies the formulas for dimension and superdimension of $Gr(r|s, m|n)$:
	\begin{equation}\label{dim}
		\dim Gr(r|s, m|n)=(r(m-r)+s(n-s)|r(n-s)+s(m-r)),
	\end{equation}
	\begin{equation}\label{sdim}
		\operatorname{sdim} Gr(r|s, m|n)=(r-s)((m-r)-(n-s)).    
	\end{equation}
	Recall that $G$ has a compact real form $\mathcal U=U(m|n)$, the unitary supergroup. The Lie algebra
	$\mathfrak u(m|n)$ is the set of fixed points of antilinear involution $\theta$ defined by
	$$\theta\left(\begin{matrix}A&B\\ C&D\end{matrix}\right)=\left(\begin{matrix}-\bar A^t&\mathbf i\bar C^t\\ \mathbf i \bar B^t&-\bar D^t\end{matrix}\right).$$
	
	Let $\mathcal K:=\mathcal U\cap K=\mathcal U\cap P\simeq U(r|s)\times U(m-r|n-s)$.
	
	\begin{prop}\label{invariantform} The unitary supergroup $\mathcal U$ acts transitively on the supergrassmannian $Gr(r|s, m|n)$ and we have an isomorphism of real supermanifolds
		$$Gr(r|s, m|n)\simeq \mathcal U/\mathcal K.$$ Furthermore, $Gr(r|s, m|n)$ has a unique up to normalization $\mathcal U$-invariant volume form.
	\end{prop}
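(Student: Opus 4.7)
The plan is to separate the proposition into two sub-statements: the identification $\mathcal U/\mathcal K \simeq Gr(r|s,m|n)$ as real supermanifolds, and the existence and uniqueness (up to scalar) of a $\mathcal U$-invariant volume form.

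For the identification, I would reduce to a Lie-algebra computation at a basepoint $p \in Gr(r|s,m|n)$ whose stabilizer in $G = GL(m|n)$ is the maximal parabolic $P$. The key observation is that the antilinear involution $\theta$ defining $\u$ carries $\p$ to its opposite parabolic, so that
\[
\u \cap \p \;=\; \u \cap (\p \cap \theta(\p)) \;=\; \u \cap \mathfrak l,
\]
where $\mathfrak l \simeq \mathfrak{gl}(r|s)\oplus\mathfrak{gl}(m-r|n-s)$ is the Levi factor; inspection of the defining formula for $\theta$ identifies this intersection as $\k$. Since $\u$ is a real form of $\g$, one has $\dim_\R\u_{\bar i} = \dim_\C\g_{\bar i}$, and similarly for $\k$ and $\p$, so a direct count matches the real superdimension of $\u/\k$ with that of $\g/\p = T_p Gr(r|s,m|n)$. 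Combined with the injectivity forced by $\u\cap\p = \k$, this makes $\u/\k \hookrightarrow \g/\p$ an isomorphism of real super vector spaces. The classical transitivity of $U(m)\times U(n)$ on the underlying manifold $Gr(r,m)\times Gr(s,n)$ then upgrades via the standard theory of homogeneous superspaces to a transitive action of $\mathcal U$ with stabilizer $\mathcal K$, giving the isomorphism.

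For the invariant volume form, I would invoke the standard correspondence between $\mathcal U$-invariant sections of $\Ber(T^*(\mathcal U/\mathcal K))$ and $\mathcal K$-invariants in the one-dimensional line $\Ber((\u/\k)^*)$. Uniqueness up to scalar is then automatic. Existence reduces to showing that the modular character $\chi\colon\mathcal K\to\C^\times$ through which $\mathcal K$ acts on this line by the adjoint action is trivial. Because $\mathcal K$ is compact (its underlying group is a product of unitary groups), $\chi$ lands in $U(1)$; and because $\u/\k$ is naturally the real form of the complex $\mathcal K$-module $\g/\p$, the real Berezinian character is the squared modulus of the complex one, forcing $\chi$ to take values in $\R_{>0}\cap U(1)=\{1\}$.

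The main technical obstacle is precisely this triviality of $\chi$, since the Berezinian of a homogeneous superspace is subtler than the classical determinant. A cleaner alternative route is to construct an invariant form by hand: the complex supermanifold structure on $Gr(r|s,m|n)$ gives a holomorphic $G$-equivariant, and hence $\mathcal U$-equivariant, Berezinian line bundle, and pairing a local frame with its complex conjugate produces a canonical nowhere-vanishing real $\mathcal U$-invariant volume form. Uniqueness up to normalization then follows because any two such forms differ by a nowhere-zero $\mathcal U$-invariant function on the connected compact base, hence by a scalar.
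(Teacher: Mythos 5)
Your argument is correct and reaches the same conclusions, but for the existence of the invariant form you take a genuinely different route from the paper. The paper introduces the Hermitian form $H(y_1,y_2)=\operatorname{str}\,\theta(y_1)y_2$ on $T_p\MM\simeq\Hom_{\C}(\C^{r|s},\C^{m-r|n-s})$, notes that $B=\operatorname{Re}H$ is a nondegenerate even symmetric $\mathcal K$-invariant form, and concludes $\Ber|_{\mathcal K}\in\{\pm1\}$ (hence $=1$ by connectedness); this gives $\mathcal K\subset SL(T_p\MM)$ and then translates. You instead argue abstractly that the modular character $\chi=\Ber_{\R}$ of $\mathcal K$ on $\u/\k$ is trivial, combining compactness (forcing $\chi$ into $U(1)$) with the fact that $\u/\k$ carries a complex structure under which $\chi=|\Ber_{\C}|^2\in\R_{>0}$. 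Both arguments ultimately exploit unitarity of $\mathcal K$, but yours sidesteps the explicit metric. Two remarks. First, a terminology slip: $\u/\k$ is not a \emph{real form} of $\g/\p$ but its \emph{realification} (it has the same real dimension as $\dim_{\R}\g/\p$, not half); your formula $\Ber_{\R}=|\Ber_{\C}|^2$ is the one valid for realifications, so the conclusion is right even though the label is wrong --- a real form would instead give $\Ber_{\R}=\Ber_{\C}$. Second, the paper's more explicit route is not idle: the metric $B$ constructed from $H$ is what is used a few lines later to pin down the normalization $\omega=\sqrt{|\Ber B|}\,\frac{dx_1\cdots dx_{d_1}}{d\xi_1\cdots d\xi_{d_2}}$, which matters in the subsequent volume computations, so your cleaner existence argument would still need to be supplemented by the construction of $B$ before the rest of Section 5 can proceed. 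Your first part (the identification $\u\cap\p=\u\cap\mathfrak l=\k$ plus dimension count plus classical transitivity) is just a fleshed-out version of the paper's terse ``follows from the corresponding classical fact \ldots and comparison of odd dimensions,'' and your alternative construction of $\omega$ by pairing a holomorphic Berezinian frame with its conjugate is also sound.
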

	\begin{proof} The first assertion follows from the corresponding classical fact for the underlying manifolds
		$$Gr(r|m)\times Gr(s|n)\simeq (U(m)/(U(r)\times U(m-r))\times (U(n)/(U(s)\times U(n-s))$$
		and comparison of odd dimensions of $\mathcal U/\mathcal K$ and $G/P$.
		
		To prove the second assertion we consider the point $p\in  Gr(r|s, m|n)$ which is stabilized by $\mathcal K$. The tangent space $T_p Gr(r|s, m|n)$ is a $\mathcal K$-module. One can identify it
		with the space $\Hom_{\mathbb C}(\mathbb C^{r|s},\mathbb C^{m-r|n-s})$. In this way a tangent vector is a $(r|s)\times (m-r|n-s)$-matrix $y$. The real part of the Hermitian form
		$H(y_1,y_2)=\operatorname{str}\theta(y_1)y_2$ defines a non-degenerate supersymmetric bilinear form $B$ on $T_p Gr(r|s, m|n)$ considered as a supervector space over $\mathbb R$. Since $\mathcal K$ is connected it is clear
		that $\mathcal K\subset SL(T_p Gr(r|s, m|n))$ and hence preserves a volume form $\omega_p$ on $T_p Gr(r|s, m|n)$. Then using a standard translation argument, $\omega_p$ induces a unique invariant form on $ Gr(r|s, m|n)$
	\end{proof}
	We choose an orientation $Gr(r|s, m|n)_0$ via its complex structure, i.e. so that for $z=x+iy$ the basis $dx,dy$ is positively oriented; similarly choose an orientation on $T_pGr(r|s, m|n)_{\ol{1}}$ according to its complex structure.
	
	If we let $\dim_{\R} Gr(r|s,m|n)=(d_1|d_2)$, then choosing local, positive oriented coordinates $u_1,\dots,u_{d_1},\xi_1,\dots,\xi_{d_2}$, we normalize $\omega$ so that in these local coordinates
	\[
	\omega=\sqrt{|\Ber B|}\frac{dx_1\cdots dx_{d_1}}{d\xi_1\cdots d\xi_{d_2}},
	\]
	where $B$ denotes the matrix form of the metric in our chosen local coordinate system.
	
	\begin{definition}
		With the above chosen volume form $\omega$ on $Gr(r|s,m|n)$, we set 
		\[
		V(r|s,m|n)=\int\limits_{Gr(r|s,m|n)}\omega.
		\]
	\end{definition}

	Observe that we have $V(r|s,m|n)=V(s|r,n|m)$.
	\subsection{Orientation subtlety and volume relationship}  
	
	Over the real supergroup $\UU(m|n)$ there is an obvious identification of homogeneous supervarieties $Gr(r|s,m|n)\cong Gr(m-r|n-s,m|n)$; however over $GL(m|n)$ these spaces are \emph{not} isomorphic.  Indeed the corresponding parabolics will not be conjugate in general.  However, the antilinear involution $\theta$ takes one parabolic to the other, inducing an isomorphism of real supervarieties 
	\[
	\Theta: Gr(r|s,m|n)\cong Gr(m-r|n-s,m|n)
	\]
	which is $\UU(m|n)$-equivariant.
	
	This subtle point is important because the orientation chosen on $Gr(r|s,m|n)$ comes from the complex structure induced as a $GL(m|n)$ homogeneous space.  Thus $\Theta$ will not in general take the chosen orientation on $Gr(r|s,m|n)$ to the chosen orientation on $Gr(m-r|n-s,m|n)$.  Since $\theta$ is acting by complex conjugate on coordinates, the orientation on the even part will be reversed $\dim_{\C} Gr(r|s,m|n)_0$ times.  On the other hand the invariant form $\omega$ will change by a sign of $(-1)^{\dim_{\C}Gr(r|s,m|n)_0+\dim_{\C}Gr(r|s,m|n)_1}$.  Therefore the volume will change by $(-1)^{\dim_{\C}Gr(r|s,m|n)_{1}}$, giving:
	\begin{lemma}\label{lemma_sign}
		\[
		V(r|s,m|n)=(-1)^{r(n-s)+s(m-r)}V(m-r|n-s,m|n). 
		\]
	\end{lemma}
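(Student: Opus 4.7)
The plan is to formalize the informal argument given in the paragraph immediately preceding the lemma. First I would extend the antilinear involution $\theta$ on $GL(m|n)$ (which defines the real form $\UU(m|n)$) to a $\UU(m|n)$-equivariant diffeomorphism of real supermanifolds
\[
\Theta : Gr(r|s,m|n) \longrightarrow Gr(m-r|n-s,m|n),
\]
by observing that $\theta$ swaps the parabolic stabilizing the reference $(r|s)$-plane with the parabolic stabilizing its Hermitian orthogonal complement, which has type $(m-r|n-s)$. By the uniqueness of the $\UU(m|n)$-invariant volume form (Proposition \ref{invariantform}), $\Theta^\ast \omega'$ agrees with $\omega$ up to a sign $c \in \{\pm 1\}$, where $\omega'$ denotes the normalized invariant form on the target. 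The lemma then reduces to computing $c$.

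Next I would determine the orientation contribution. Both $Gr(r|s,m|n)_0$ and $Gr(m-r|n-s,m|n)_0$ are oriented via their complex structure. Since $d\Theta$ is antilinear on tangent spaces, it intertwines the complex structure $J$ at $p$ with $-J$ at $\Theta(p)$ on both the even and odd parts. On a complex vector space of complex dimension $k$, the replacement $J \mapsto -J$ reverses the canonical orientation by $(-1)^k$. Applied to the even tangent space, of complex dimension $r(m-r) + s(n-s)$ by \eqref{dim}, this contributes a factor $(-1)^{r(m-r)+s(n-s)}$ to $c$.

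I would then compute how the Berezinian density $\sqrt{|\Ber B|}\,dx_1\cdots dx_{d_1}/d\xi_1\cdots d\xi_{d_2}$ transforms under the complex-conjugate change of coordinates induced by $\theta$. The invariance of the Hermitian form $H$ under $\theta$ forces $|\Ber B|$ to be preserved, so the entire sign comes from differentials. Each of the $\dim_\C Gr(r|s,m|n)_0 = r(m-r)+s(n-s)$ complex even coordinates contributes a factor $-1$ through $dx_i\wedge dy_i \mapsto dx_i \wedge(-dy_i)$, while each of the $\dim_\C Gr(r|s,m|n)_1 = r(n-s)+s(m-r)$ complex odd coordinates contributes a further $-1$ through the inverse Jacobian transformation of the Berezin measure. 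Multiplying gives a density sign of $(-1)^{r(m-r)+s(n-s)+r(n-s)+s(m-r)}$; combined with the orientation sign above, the even contributions cancel and we obtain
\[
c = (-1)^{r(n-s)+s(m-r)},
\]
yielding the claim upon integration.

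The main technical difficulty I anticipate is pedantically separating the orientation reversal (which only affects the underlying real manifold) from the transformation of the Berezinian under antilinear coordinate change, especially in the odd directions, where the Berezin measure transforms by the inverse of the odd Jacobian rather than by the Jacobian itself; once the conventions for $dz\wedge d\bar z$ versus $dx\wedge dy$ and for $d\xi\, d\bar\xi$ versus its real analogue are fixed, the final count is mechanical.
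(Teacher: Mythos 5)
Your proposal is correct and follows essentially the same approach as the paper, which establishes the lemma through the informal discussion in the paragraph immediately preceding it: use the $\UU$-equivariant anti-holomorphic isomorphism $\Theta$, track the orientation reversal (a factor $(-1)^{\dim_\C\MM_0}$) separately from the transformation of the Berezinian volume form (a factor $(-1)^{\dim_\C\MM_0+\dim_\C\MM_1}$), and multiply. The only cosmetic issue is a slight overloading of the symbol $c$ (you define it as the sign relating $\Theta^*\omega'$ to $\omega$, but then fold in the orientation sign as "contributing to $c$"); the arithmetic, which yields $(-1)^{\dim_\C\MM_1}=(-1)^{r(n-s)+s(m-r)}$, is the same as the paper's.
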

	
	\subsection{Computation of volumes}
	In this section we seek to show that:
	\begin{thm}\label{grassmannmain} Let $\MM= Gr(r|s, m|n)$. Then the integral
		$\int_{\MM}\omega\neq 0$ if and only if $\sdim \MM\geq 0$.
	\end{thm}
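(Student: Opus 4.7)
The plan is to apply the Schwarz--Zaboronsky localization formula (Corollary \ref{simp}) to $\MM = Gr(r|s,m|n)$, using the action of $\UU = U(m|n)$ and a carefully chosen odd element $Q \in \u(m|n)_{\ol{1}}$. The zeros of the induced odd vector field on $\MM$ are precisely the $Q$-invariant $(r|s)$-dimensional subspaces of $\C^{m|n}$, and the first task is to describe them explicitly in terms of the eigenspace decomposition of $Q^2$.

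I will first handle the square case $m=n$. For generic $Q$, the square $Q^{2}\in\u(n|n)_{\ol{0}}$ is regular semisimple with $n$ distinct nonzero eigenvalues, each occurring in a $(1|1)$-dimensional eigenspace on which $Q$ acts as an odd isomorphism. Consequently the $Q$-invariant subspaces of $\C^{n|n}$ are exactly the direct sums of subsets of these eigenspaces, so a $Q$-invariant $(r|s)$-subspace exists if and only if $r=s$. When $r\neq s$, we have $Z(Q)=\emptyset$ and Corollary \ref{zero} gives $V(r|s,n|n)=0$, which matches $\sdim \MM = -(r-s)^{2}<0$. When $r=s$, there are $\binom{n}{r}$ isolated nondegenerate zeros, indexed by the $r$-subsets of the eigenvalues; identifying the tangent space at each such $W$ with $\Hom(W,\C^{n|n}/W)$ makes the $Q$-action on $T_{p}\MM$ explicit, and Corollary \ref{complex} reduces the localization formula to an explicit sum in the eigenvalues of $Q^{2}$.

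For general $(m,n)$ with $m\neq n$, I use the two-step flag variety $Fl(r_{1}|s_{1}, r_{2}|s_{2}, m|n)$, which is simultaneously a Grassmannian bundle over $Gr(r_{1}|s_{1}, m|n)$ with fiber $Gr(r_{2}-r_{1}|s_{2}-s_{1},m-r_{1}|n-s_{1})$ and over $Gr(r_{2}|s_{2}, m|n)$ with fiber $Gr(r_{1}|s_{1}, r_{2}|s_{2})$. Applying Fubini to the invariant volume form along each fibration yields, up to explicit sign,
\[
V(r_{1}|s_{1}, m|n)\, V(r_{2}-r_{1}|s_{2}-s_{1}, m-r_{1}|n-s_{1})\;=\; V(r_{2}|s_{2}, m|n)\, V(r_{1}|s_{1}, r_{2}|s_{2}).
\]
Specializing one of the auxiliary Grassmannians to a purely even $Gr(k, m)$ or $Gr(k, r)$ (whose volume is a classically nonzero quantity) and invoking Lemma \ref{lemma_sign} to convert between $(r|s)$ and $(m-r|n-s)$, I can express any $V(r|s, m|n)$ in terms of the diagonal volumes $V(k|k, n|n)$ treated above, propagating both the vanishing when $\sdim \MM <0$ and the nonvanishing when $\sdim \MM \geq 0$ to all parameters.

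\textbf{Main obstacle.} The technical heart of the argument is the diagonal case $r=s$, $m=n$: one must sum the $\binom{n}{r}$ local contributions $\alpha(T_{p}\MM,\omega_{p})$ supplied by Corollary \ref{complex} in the eigenvalues $\lambda_{1},\dots,\lambda_{n}$ of $Q^{2}$ and verify that the total is strictly nonzero. I expect this to collapse to a Vandermonde/symmetric-function identity of definite sign, but extracting its nonvanishing is precisely where the localization technique has to do its real work; once this is in hand, the flag-variety Fubini relations carry the result to arbitrary $(r|s,m|n)$.
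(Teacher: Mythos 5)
Your overall architecture matches the paper's: localize via Schwarz--Zaboronsky on the equal-rank case $Gr(r|r,n|n)$ (where a generic $Q$ has $\binom{n}{r}$ isolated zeros indexed by $r$-subsets), then propagate to general $(r|s,m|n)$ by applying Fubini to the two-step flag variety as a double fiber bundle. That part of the plan is exactly right.

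However, the ``main obstacle'' you flag is illusory, and you have a genuine gap elsewhere. First, for the ordinary supergrassmannian the local contributions at the fixed points do \emph{not} assemble into a Vandermonde-type symmetric-function identity: by Corollary \ref{complex}, the tangent space at each fixed point breaks into $(2|2)$-blocks $\mathbb{C}u_{ij}^{\pm}\oplus\mathbb{C}v_{ij}^{\pm}$, and the paired factors $\frac{a_i+a_j}{a_i-a_j}\cdot\frac{a_i-a_j}{a_i+a_j}$ cancel to $1$ for every block. Hence $\alpha(T_p\MM,\omega_p)=1$ at every zero, and the sum is simply $\binom{n}{r}$ with no symmetric-function analysis needed. (The nontrivial symmetric-function sum you anticipated appears only for the $Q$-Grassmannian $QGr(r,n)$, where only ``half'' of each tangent block is present.) Second, the vanishing direction $\sdim\MM<0$ is not clearly forced by your Fubini scheme. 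You establish $Z(Q)=\emptyset$ only when $m=n$, $r\neq s$, and then hope to transport this to general $(r|s,m|n)$ via the flag-bundle relation. But the Fubini identity is a product relation of the form $V(\cdot)V(\cdot)=V(\cdot)V(\cdot)$, and propagating a zero through it requires knowing that the remaining factors are nonzero; tracking the superdimensions of the auxiliary Grassmannians appearing in the relation does not obviously keep them in the regime you have already settled, and the naive reduction loops back to a Grassmannian of the same superdimension sign. The paper instead handles $\sdim\MM<0$ directly (Proposition \ref{negativecase}): because $\u_{\ol 1}^{hom}=\u_{\ol 1}$ and $\mathfrak{v}_{\ol 1}^{hom}=\mathfrak{v}_{\ol 1}$, the dimension-count argument of Corollary \ref{cor_sdim_constraint} shows $\UU_0\cdot\mathfrak{v}_{\ol 1}\neq\u_{\ol 1}$ when $\sdim G/K<0$, so one can choose $Q$ with $Z(Q)=\emptyset$ on all of $Gr(r|s,m|n)$, not just the square case. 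You should adopt this direct vanishing argument rather than trying to extract it from the multiplicative relations.
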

	\begin{remark} Note that in the case $\sdim\MM=0$, the normalization of $\omega$ is independent of the choice of invariant form $B$ that we made.  
	\end{remark}

	\begin{prop}\label{negativecase} Assume that $\sdim \MM<0$. Then $\int_{\MM}\omega= 0$. 
	\end{prop}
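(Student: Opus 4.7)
The overall plan is to apply Corollary \ref{zero}: I will exhibit an odd element $Q\in(\Lie\UU)_{\ol{1}}$ whose induced vector field on $\MM$ has empty zero set whenever $\sdim\MM<0$, which immediately gives $\int_{\MM}\omega=0$.

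First I would choose a generic
$$
Q=\left(\begin{matrix} 0 & \mathbf{i}\bar{C}^{t}\\ C & 0\end{matrix}\right)\in\u(m|n)_{\ol{1}},
$$
where $C$ is an $n\times m$ complex matrix of maximal rank $\min(m,n)$, subject to the Zariski open condition that the Hermitian matrix $C\bar{C}^{t}$ (equivalently $\bar{C}^{t}C$) has pairwise distinct nonzero eigenvalues. Since $Q^{2}\in\u(m)\oplus\u(n)$ generates a compact one-parameter subgroup of $U(m)\times U(n)$, the compactness hypothesis of Theorem \ref{SZ} is satisfied, and $\UU$-invariance of $\omega$ gives $Q\omega=0$ automatically. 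Under the identification $\MM\cong\UU/\KK$, the zeros of $Q$ on $\MM$ are exactly the $Q$-stable $(r|s)$-dimensional sub-supervector spaces $W\subset\C^{m|n}$. Decomposing $\C^{m|n}=V_{0}\oplus\bigoplus_{\lambda\neq 0}V_{\lambda}$ into $Q^{2}$-eigenspaces, each $V_{\lambda}$ with $\lambda\neq 0$ is $(1|1)$-dimensional with $Q|_{V_{\lambda}}$ an isomorphism, while on $V_{0}=\ker Q^{2}$ the operator $Q$ vanishes.

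Assume now that $m\leq n$ (the case $m>n$ reduces to this via the symmetry $V(r|s,m|n)=V(s|r,n|m)$, which also preserves $\sdim$). Then $V_{0}$ is purely odd of dimension $n-m$, and since $Q$ is an iso on each nonzero $V_{\lambda}$, any $Q$-stable $W$ must take the form $W=\bigoplus_{\lambda\in S}V_{\lambda}\oplus U$ for some $S\subseteq\{\lambda\neq 0\}$ and $U\subseteq V_{0}$, giving $\sdim W=(|S|\,|\,|S|+\dim U)$. Matching $\sdim W=(r|s)$ forces $|S|=r$ and $0\leq s-r=\dim U\leq n-m$. On the other hand, rearranging formula (\ref{sdim}) yields
$$
\sdim Gr(r|s,m|n)=(s-r)\bigl((n-m)-(s-r)\bigr),
$$
which is $\geq 0$ precisely when $0\leq s-r\leq n-m$ — exactly the existence condition just derived. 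Thus $\sdim\MM<0$ forces $Z(Q)=\emptyset$, and Corollary \ref{zero} closes the argument. The main bookkeeping to verify concerns realizing the generic $Q$ inside the compact form $\u(m|n)$ (a Zariski-density argument on $C$) and uniformly treating the two parity regimes $m\lessgtr n$; both are routine once the spectral picture is in place.
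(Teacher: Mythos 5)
Your proof is correct, but it takes a genuinely different route from the paper's. The paper argues by abstract dimension counting: since $\u_{\ol{1}}^{hom}=\u_{\ol{1}}$ and $\mathfrak v_{\ol{1}}^{hom}=\mathfrak v_{\ol{1}}$ (where $\mathfrak v=\operatorname{Lie}\KK$), the contrapositive of the argument in Corollary \ref{cor_sdim_constraint} shows that when $\sdim\MM<0$ the image of $\UU_0\times\mathfrak v_{\ol{1}}\to\u_{\ol{1}}$ cannot be all of $\u_{\ol{1}}$; any $Q$ outside that image then has $Z(Q)=\emptyset$, and Corollary \ref{zero} finishes. You instead write down an explicit generic $Q\in\u(m|n)_{\ol{1}}$, decompose $\C^{m|n}$ into $Q^2$-eigenspaces, and classify the $Q$-stable $(r|s)$-subspaces combinatorially, observing that none exist precisely when $(s-r)$ falls outside $[0,n-m]$, which you correctly identify with the condition $\sdim\MM<0$ after rewriting formula (\ref{sdim}). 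Both proofs funnel through Corollary \ref{zero}; the paper's is shorter and applies verbatim to $QGr(r,n)$, whereas your construction is self-contained, avoids the Sard/open-image step entirely, and is pleasingly parallel to the $Q$ used in Propositions \ref{equalrank} and \ref{one-zero} for the non-vanishing cases. One small remark: your argument actually shows something sharper, namely that a single generic $Q$ of maximal rank has $Z(Q)=\emptyset$ if and only if $\sdim\MM<0$ and has $\binom{\min(m,n)}{\text{appropriate}}$-type zero sets otherwise, which is exactly the spectral picture implicitly used later in the volume computations.
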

	\begin{proof}	By Corollary \ref{zero} it suffices to construct an odd vector field $Q$ satisfying conditions (2) and (3) with $Z(Q)=\emptyset$.
         However here we have $\u_{\ol{1}}^{hom}=\u_{\ol 1}$ and $\mathfrak{v}_{\ol{1}}^{hom}=\mathfrak{v}_{\ol{1}}$, where $\mathfrak{v}=\operatorname{Lie}\KK$.  The argument of Corollary \ref{cor_sdim_constraint} still works in our situation, and thus we find that $\UU_0\cdot\mathfrak{v}_{\ol{1}}\neq\u_{\ol{1}}^{hom}$, and we are done.
	\end{proof}
	
Next we prove
	\begin{prop}\label{equalrank} Let $\MM= Gr(r|r, n|n)$. Then $\int_{\MM}\omega=(2\pi)^{2(n-r)r}\binom{n}{r}$.
	\end{prop}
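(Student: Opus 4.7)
Our plan is to apply the Schwarz--Zaboronsky localization formula (Corollary \ref{simp}) to $\MM = Gr(r|r,n|n)$ with a carefully chosen odd vector field $Q$ arising from a generic element of $\u(n|n)_{\ol{1}}$. Concretely, for distinct positive reals $d_1,\dots,d_n$, take $Q$ acting on the standard basis by $Q(e_i) = (1+\mathbf{i})d_i f_i$ and $Q(f_i) = (1+\mathbf{i})d_i e_i$. Then $Q^2$ acts by $2\mathbf{i}\operatorname{diag}(d_1^2,\dots,d_n^2)$ on each of $\C^n_{\ol 0}$ and $\C^n_{\ol 1}$, hence $Q^2$ is compact.

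First we identify $Z(Q)$. An $(r|r)$-dimensional subspace $W \subset \C^{n|n}$ is $Q$-stable if and only if $W_{\ol 1} = D\cdot W_{\ol 0}$ and $W_{\ol 0}$ is invariant under $D := \operatorname{diag}(d_1,\dots,d_n)$ (equivalently, under $D^2$). Since the $d_i^2$ are distinct, the $D^2$-stable $r$-planes in $\C^n$ are precisely the $\binom{n}{r}$ coordinate subspaces. Thus $Z(Q) = \{p_I : I \subseteq \{1,\dots,n\},\ |I|=r\}$ with $p_I$ corresponding to $W_I = \operatorname{span}(e_i, f_i : i\in I)$, giving $|Z(Q)| = \binom{n}{r}$.

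Next we compute $\alpha(T_{p_I}\MM, \omega_{p_I})$. Identify $T_{p_I}\MM \cong \Hom_\C(W_I, \C^{n|n}/W_I)$. For each pair $(i,j)$ with $i\in I$, $j\notin I$, this identification yields a $Q$-stable complex $(2|2)$-dimensional summand spanned by the four elementary homomorphisms $e_i \mapsto e_j$, $f_i \mapsto f_j$ (even), and $e_i \mapsto f_j$, $f_i \mapsto e_j$ (odd). A direct diagonalization splits this summand further into two $(1|1)$-pieces, on which the pair $(d,c)$ from Corollary \ref{complex} equals $(d_j - d_i, d_j + d_i)$ and $(d_j + d_i, d_j - d_i)$ respectively. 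By Corollary \ref{complex}, the two contributions are reciprocals and hence their product is $1$. Taking the product over all pairs $(i,j)$ gives $\alpha(T_{p_I}\MM, \omega_{p_I}) = 1$. Positivity and distinctness of the $d_i$ ensure $d_j \pm d_i \neq 0$, so that all fixed points are nondegenerate.

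Substituting into Corollary \ref{simp} and noting that $\dim_\R \MM_0 = 4r(n-r)$, so that the prefactor in the localization formula is $(2\pi)^{2r(n-r)}$, we conclude
\[
\int_\MM \omega = (2\pi)^{2r(n-r)}\binom{n}{r}.
\]
The key technical obstacle will be the careful bookkeeping of complex structures, orientations, and normalizations at each fixed point; specifically, one must verify that the invariant volume form $\omega = \sqrt{|\Ber B|}\,dx/d\xi$ at $p_I$ decomposes as a product of the chosen volume forms on the pair-$(i,j)$ blocks in a manner compatible with Corollary \ref{complex}. This reduces to checking that the $U(n|n)$-invariant Hermitian trace form on $\Hom_\C(W_I, \C^{n|n}/W_I)$ is orthogonal for the pair decomposition, which is immediate from its definition.
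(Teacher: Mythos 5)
Your proposal is correct and follows essentially the same route as the paper's proof: you choose the same odd element $Q$ (with $a_i$ renamed $d_i$), identify the zeros with the $\binom{n}{r}$ coordinate $(r|r)$-subspaces (the paper phrases this via a Weyl-group counting argument, but it is the same set), and compute $\alpha = 1$ at each fixed point by decomposing the tangent space into $(1|1)$-blocks whose contributions to Corollary \ref{complex} pair off as reciprocals — precisely the paper's $u_{ij}^\pm, v_{ij}^\pm$ diagonalization. The only place where the paper is more explicit is the verification that the invariant volume form factors compatibly with this block decomposition (the paper writes out the bilinear form and the resulting coordinate volume expression), which you flag but do not carry out in full; this is a routine check and does not constitute a gap.
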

	\begin{proof} Consider the standard basis $e_1,\dots,e_n,f_1,\dots,f_n$ in $\mathbb C^{n|n}$. Let $p\in\MM$ which corresponds to the subspace spanned by $e_1,\dots,e_r,f_1,\dots,f_r$.
		Choose $a_1,\dots,a_n\in\mathbb R^*$ such that $a_i\pm a_j\neq 0$ for any $i\neq j$. Let
		$$Q=\left(\begin{matrix}0&A\\ A&0\end{matrix}\right ),\quad A=(1+\mathbf i)\operatorname{diag}(a_1,\dots,a_n).$$
		Then $Q\in\operatorname{Lie}(\mathcal K)$. Moreover, it is clear that $Q:T_p\MM\to T_p\MM$ is an isomorphism.
		
		Let us compute $\alpha(T_p(\MM),\omega_p)$. We use the identification
		$T_p\MM\simeq \Hom_{\mathbb C}(\mathbb C^{r|r},\mathbb C^{n-r|n-r})$ and the $Q$-invariant decomposition
		\[
		T_p\MM=\bigoplus_{1\leq i\leq r,r<j\leq n}\Hom_{\mathbb C}(\mathbb Ce_i\oplus\mathbb Cf_i,\mathbb Ce_j\oplus\mathbb Cf_j).
		\]
		Define a basis $u_{ij}^{\pm},v_{ij}^{\pm}$ in $T_p\MM$ by
		$$u_{ij}^{\pm}=e_i\otimes e_j^*\pm f_i\otimes f_j^*,\quad v_{ij}^{\pm}=f_i\otimes e_j^*\pm e_i\otimes f_j^*.$$
		Then we have
		$$Qu_{ij}^{\pm}=(1+i)(a_i\mp a_j)v_{ji}^{\pm}, \quad Qv_{ij}^{\pm}=(1+i)(a_i\pm a_j)u_{ji}^{\pm}.$$
		Further $\theta u_{ij}^{\pm}=-u_{ji}^{\pm}$, $\theta v_{ij}^{\pm}=\pm iv_{ji}^{\pm}$.  Thus we have
		\[
		\langle u_{ij}^{\pm},u_{ij}^{\mp}\rangle=-2, \ \ \ \langle iu_{ij}^{\pm},iu_{ij}^{\mp}\rangle =2.
		\]
		\[
		\langle iv_{ij}^{\mp},v_{ij}^{\pm}\rangle=2.
		\]
		
		Let $z_{ij}^{\pm}=x_{ij}^{\pm}\pm iy_{ij}^{\pm}$ be the complex coordinate dual to $u_{ij}^{\pm}$, and let $\zeta_{ij}^{\pm}=\xi_{ij}^{\pm}+i\eta_{ij}^{\pm}$ be the complex coordinate dual to $v_{ij}^{\pm}$.  Then the bilinear form (as an element of $S^2(\TT^*\MM_p)$) at this points looks as follows:
		\[
		2\sum\limits_{i,j}\left(-dx_{ij}^+dx_{ij}^-+dy_{ij}^+dy_{ij}^-\right)-2\sum\limits_{i,j}\left(d\xi_{ij}^+d\eta_{ij}^-+d\xi_{ij}^-d\eta_{ij}^+\right).
		\]
		Thus the volume form at this point is given by:
		\[
		\frac{\prod\limits_{1\leq i\leq r<j\leq n}dx_{ij}^{+}dy_{ij}^{+}dx_{ij}^{-}dy_{ij}^{-}}{\prod\limits_{1\leq i\leq r<j\leq n}d\xi_{ij}^{+}d\eta_{ij}^{-}d\xi_{ij}^{+}d\eta_{ij}^{-}}=	\frac{\prod\limits_{1\leq i\leq r<j\leq n}dz_{ij}^{+}d\ol{z_{ij}^{+}}dz_{ij}^{-}d\ol{z_{ij}^{-}}}{\prod\limits_{1\leq i\leq r<j\leq n}d\zeta_{ij}^{+}d\ol{\zeta_{ij}^{+}}d\zeta_{ij}^{-}d\ol{\zeta_{ij}^{-}}}
		\]
	
		Therefore by Lemma \ref{complex} we find that:
		\[
		\alpha(T_p\MM,\omega_p)=\prod_{i\leq r,j>r}\frac{a_i+a_j}{a_i-a_j}\frac{a_i-a_j}{a_i+a_j}=1.
		\]
		In particular we see that $\alpha(T_p\MM,\omega_p)$ does not depend on a choice of $a_1,\dots,a_n$.
		
		Now consider $Q$ as a vector field on $\MM$ and let us compute $Z(Q)$. As in the proof of Proposition \ref{negativecase}
		we should describe
		\[
		S:=\{g\in\mathcal U_0\mid\operatorname{Ad}_g^{-1}(Q)\in\operatorname{Lie}\mathcal K\}.
		\]
		Then $Z(Q)$ is in bijection with $S/\mathcal K$. Note that the projection of $Q^2$ to either factor of $GL(n)\times GL(n)=GL(n|n)_0$ is a generic element of the Cartan subalgebra, so we can reduce the situation to
		computation elements $w\in W/W_K$ such that
		\begin{equation}\label{au1}
			w(Q)\in\operatorname{Lie}\mathcal K,
		\end{equation}
		where $W$ is the Weyl group of $\mathcal U$ and $W_K$ is the Weyl group of $\mathcal K$.
		Note that $W\simeq S_n\times S_n$. Let $w=(\sigma,\tau)$ for some $\sigma,\tau\in S_n$. Then $w$ satisfies (\ref{au1}) iff for each $i$ either $\sigma(i),\tau(i)\in \{1,\dots,r\}$ or
		$\sigma(i),\tau(i)\in\{r+1,\dots,n\}$. Therefore $w=(\sigma,\sigma)w'$ for some $w'\in W_K$. Thus, $|Z(Q)|=\binom{n}{r}$. Moreover, $Z(Q)$ is on the orbit of the diagonal $S_n$ in $W$.
		Furthermore,
		$$\alpha(T_{wp}\MM,\omega_{wp})=\alpha(T_p\MM,\omega_p)=1,$$
		since $w^{-1}Q$ has the same form as $Q$ with permuted $a_1,\dots,a_n$.
		Thus, Theorem \ref{SZ} gives $\int_{\MM}\omega=(2\pi)^{2r(n-r)}\binom{n}{r}$.
	\end{proof}

	\begin{prop}\label{one-zero} Let $m>n$.  Then we have
		\[
V(m-n|0,m|n))=(-1)^{n(m-n)}V(n|n,m|n)=(2\pi)^{n(m-n)}.		
		\]
	\end{prop}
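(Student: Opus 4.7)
The first identity is immediate from Lemma~\ref{lemma_sign} with $r=m-n$ and $s=0$: the sign exponent becomes $r(n-s)+s(m-r)=n(m-n)$. So it remains to evaluate either volume explicitly.

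The plan is to compute $V(m-n|0,m|n)$ by applying the Schwarz--Zaboronsky localization formula (Corollary~\ref{simp}) directly to $\MM:=Gr(m-n|0,m|n)$. The stabilizer of the base point $p_0:=\C^{m-n|0}\subset\C^{m|n}$ is $\KK=U(m-n)\times U(n|n)$, whose odd Lie algebra part lies entirely in the $\mathfrak u(n|n)$ factor. Choose $X\in\mathfrak k_{\ol 1}$ corresponding to the odd matrix whose $m\times n$ off-diagonal block is $B=\begin{pmatrix}0\\ B'\end{pmatrix}$ with $B'=(1+\mathbf i)\operatorname{diag}(a_1,\dots,a_n)$ for distinct $a_j\in\R^*$, and let $Q$ be the induced odd vector field on $\MM$. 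To see that $p_0$ is the unique zero of $Q$: the condition $\Ad_{g^{-1}}X\in\mathfrak k_{\ol 1}$ at $g=(g_1,g_2)\in U(m)\times U(n)=\UU_0$ demands that $g_1^{-1}Bg_2$ have vanishing top $m-n$ rows. Invertibility of $B'$ and of $g_2$ forces the top-right $(m-n)\times n$ block of $g_1^{-1}$ to vanish; unitarity then forces $g_1^{-1}$ to be block-diagonal in $U(m-n)\times U(n)\subset U(m)$, so $g\in\KK_0$ and $p=p_0$.

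It remains to compute $\alpha(T_{p_0}\MM,\omega_{p_0})$. Identifying $T_{p_0}\MM\cong\Hom(\C^{m-n|0},\C^{n|n})$ and using the natural elementary-tensor basis, one checks that $T_{p_0}\MM$ decomposes into $n(m-n)$ two-dimensional $Q$-invariant subspaces, each spanned by a matching even/odd pair $(u_{ab},v_{ab})$ (indexed by $(a,b)\in\{1,\dots,m-n\}\times\{1,\dots,n\}$) with $Qu_{ab}=\pm(1+\mathbf i)a_a v_{ab}$ and $Qv_{ab}=\pm(1+\mathbf i)a_a u_{ab}$, where the two signs, coming from the super-bracket and fundamental-vector-field conventions, appear consistently so as to cancel in the ratio. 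Corollary~\ref{complex} then gives $\alpha=\prod_{a,b}(a_a/a_a)=1$, and Corollary~\ref{simp} yields $V(m-n|0,m|n)=(2\pi)^{n(m-n)}$; combined with the first identity, this proves the proposition. The main delicacy is tracking the super-bracket signs in computing $\ad_X$ on the even and odd components of $\mathfrak m$ separately, so that the $c_{ab}$ and $d_{ab}$ scalars agree in sign as well as magnitude in each invariant pair, thereby fixing $\alpha=+1$ rather than some other root of unity.
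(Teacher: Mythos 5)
Your argument follows the paper's proof essentially verbatim: reduce the sign to Lemma \ref{lemma_sign}, pick the same $Q$ supported in the $\mathfrak{u}(n|n)$ block of $\mathfrak{k}$, identify the single fixed point, decompose $T_{p_0}\MM$ into $\C^{1|1}$-blocks, and apply Corollary \ref{complex} to get $\alpha=1$. The only difference is that you spell out the uniqueness of the zero (which the paper dismisses as ``easy'') and flag the sign-cancellation issue; both are useful clarifications. One small indexing slip: with $a\in\{1,\dots,m-n\}$ and $b\in\{1,\dots,n\}$, the $Q$-eigenvalue on the block $(u_{ab},v_{ab})$ should be $a_b$, not $a_a$ (the scalars $a_1,\dots,a_n$ live on the $\C^{n|n}$ target factor, and $a_a$ is undefined if $m-n>n$); this does not affect the conclusion since the ratio still equals $1$.
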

	\begin{proof} We show that $V(m-n|0,m|n)=(2\pi)^{n(m-n)}$, with the second equality following from Lemma \ref{lemma_sign}.
		
		Choose $Q\in\operatorname{Lie}U(n|n)\subset\operatorname{Lie}{\mathcal K}$ in the same way as in the proof of Lemma \ref{equalrank}. One can easily see $Q$ has exactly one zero point $p$ with stabilizer
		$\mathcal K$; it remains to compute $\alpha(T_pGr(m-n|0,m|n),\omega_p)$.  In the first case, $T_pGr(m-n|0,m|n)=\Hom(\C^{m-n|0},\C^{n|n})$, and we may take as basis
		\[
		u_{ij}=e_i\otimes e_j^*, \ \ \ \ v_{ij}=f_i\otimes e_j^*, \ \ \ 1\leq i\leq n, \ n<j\leq m.
		\]
		We see that
	\[
	Qu_{ij}=(1+i)a_iv_{ij},\ \ \ \ Qv_{ij}=(1+i)a_iu_{ij}.
	\]
	Thus following the same reasoning as in Lemma \ref{equalrank}, we find that
	\[
	\alpha(T_p\MM,\omega_p)=\prod\limits_{i,j}\frac{a_{i}}{a_{i}}=1.
	\]
	\end{proof}
	In what follows, we set $V(a,b):=V(a|0,b|0)=V(0|a,0|b)$ for $0\leq a\leq b$, and use the notation $I(\MM,\omega):=\int_{\MM}\omega$.
	\begin{cor}\label{cor-one} Let $a\leq b\leq c$. Then
		$$V(b|a,c|a)=\frac{V(b-a,c-a)V(a|a,c|a)}{V(a|a,b|a)}.$$
	\end{cor}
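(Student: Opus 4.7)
My plan is to exploit the two-step flag variety trick flagged in the introduction. Consider the superflag variety
$$
\MM := Fl(a|a, b|a;\, c|a) = \bigl\{\, V_1 \subset V_2 \subset \C^{c|a} \ \big|\ \dim V_1 = a|a,\ \dim V_2 = b|a \,\bigr\},
$$
which is a smooth compact homogeneous space for $\UU(c|a)$ and therefore admits a unique-up-to-scale invariant volume form $\omega$. There are two natural $\UU(c|a)$-equivariant projections, $\pi_1(V_1,V_2) = V_1$ and $\pi_2(V_1,V_2) = V_2$, landing in $Gr(a|a,c|a)$ and $Gr(b|a,c|a)$ respectively. The key observation is that the fiber of $\pi_1$ over $V_1$ is the Grassmannian of $(b-a|0)$-subspaces of $\C^{c|a}/V_1$; but $\C^{c|a}/V_1$ has dimension $(c-a|0)$, so this fiber is the purely even Grassmannian $Gr(b-a,c-a)$. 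The fiber of $\pi_2$ over $V_2$ is plainly $Gr(a|a, V_2) \cong Gr(a|a,b|a)$.

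Next I would verify that the invariant volume on $\MM$ factors, under each projection, as the product of the invariant volume on the base and the invariant volume on the fiber. At a chosen flag, the tangent space to $\MM$ admits the natural orthogonal decomposition
$$
T\MM \;\cong\; \Hom(V_1, V_2/V_1)\ \oplus\ \Hom(V_1, \C^{c|a}/V_2)\ \oplus\ \Hom(V_2/V_1, \C^{c|a}/V_2),
$$
induced by the $\UU(c|a)$-invariant Hermitian form on $\C^{c|a}$. Under $\pi_1$, the last summand is the tangent space to the base $Gr(a|a,c|a)$ (once we combine it with the middle summand), and the first summand is the tangent space to the fiber $Gr(b-a,c-a)$; symmetrically for $\pi_2$. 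Since the metric on $T\MM$ is the orthogonal direct sum of the induced metrics on base and fiber, the Berezinian volume form at each point is the product, and Fubini on the real supermanifold yields
$$
\int_\MM \omega \;=\; V(a|a, c|a)\cdot V(b-a,c-a) \;=\; V(b|a, c|a)\cdot V(a|a, b|a).
$$
Since $V(a|a,b|a) \neq 0$ by Proposition \ref{equalrank} (it is the volume of an equal-rank supergrassmannian with $\sdim = 0$), dividing gives the claimed identity.

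The main obstacle is justifying Fubini in the Berezin-integration setting, i.e.\ confirming that the normalizations coming from the ambient Hermitian form on $\C^{c|a}$ are mutually consistent and that no spurious signs appear when one identifies $\MM$ as a real supermanifold via its complex structure. Fortunately the orthogonal decomposition of the tangent space reduces this to a local coordinate check, and the two Grassmannians involved in each factorization receive their complex structure and orientation from the same ambient space, so the normalization constants match up. Once this factorization is in hand the corollary is immediate from the two expressions for $\int_\MM \omega$.
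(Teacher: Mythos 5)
Your argument is the same double–bundle argument the paper gives: realize $Fl(a|a,b|a,c|a)$ as the total space of two $\UU(c|a)$-equivariant fibrations, factor the invariant volume form over base and fiber in each case, integrate, and equate the two expressions. The only departure is that you spell out the Fubini step via the orthogonal decomposition of the tangent space, which the paper asserts by $\UU$-equivariance.

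One small bookkeeping slip: under $\pi_1$ (which remembers $V_1$), the fiber $Gr(b-a,c-a)$ has tangent space given by the \emph{third} summand $\Hom(V_2/V_1,\C^{c|a}/V_2)$, while the base $Gr(a|a,c|a)$ corresponds to the first and middle summands $\Hom(V_1,V_2/V_1)\oplus\Hom(V_1,\C^{c|a}/V_2)$; you have the roles of the first and third summands swapped. (It is $\pi_2$, not $\pi_1$, whose fiber tangent is the first summand.) This does not affect the factorization $\int_\MM\omega = V(a|a,c|a)\,V(b-a,c-a) = V(b|a,c|a)\,V(a|a,b|a)$ or the conclusion.
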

	\begin{proof} Consider the partial flag variety $Fl(a|a,b|a,c|a)$ with invariant form $\tilde\omega$. We use the double bundle
		$$ Gr(b|a, c|a)\xleftarrow{Gr(a|a,b|a)}Fl(a|a,b|a,c|a)\xrightarrow{Gr(b-a,c-a)} Gr(a|a, c|a).$$
		The fibers are marked on the arrows. By $\mathcal U$-equivariance we can choose an invariant volume form $\tilde\omega$ on
		$Fl(a|a,b|a,c|a)$ such that
		$$\tilde\omega=\omega_f\omega_b=\omega'_f\omega'_b,$$
		where $\omega_f$ ( respectively, $\omega'_f$) are is a volume form on the fiber of the left (respectively, right) bundle and $\omega_b$ (respectively, $\omega'_b$) is a volume form on the base of the left
		(respectively, right) bundle.
		
		Using  the right bundle we get
		$$I({Fl(a|a,b|a,c|a)},\tilde\omega)=I(Gr(b-a,c-a),\omega'_f)I(Gr(a|a,c|a),\omega'_b).$$
		Applying now the left bundle we obtain
		$$I({Fl(a|a,b|a,c|a)},\tilde\omega)=I({Gr(a|a,b|a)},\omega_f)I({Gr(b|a,c|a)},\omega_b).$$
		That gives the desired formula.
	\end{proof}
	\begin{prop}\label{general_positive} Assume that $\sdim Gr(r|s,m|n)\geq 0$ and $r\geq s$ then
		$$V(r|s,m|n)=(-1)^{(r-s)(n-s)}\frac{V(s|s,n|n) V(n|n,m|n)V(r-s,m-n)}{V(s|s,r|s)V(n-s|n-s,m-r|n-s)}.$$
	\end{prop}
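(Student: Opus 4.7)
The plan is to iterate the double-bundle technique of Corollary \ref{cor-one}, together with Lemma \ref{lemma_sign} to pivot between a Grassmannian and its orientation-reversed partner, until everything on the right-hand side is either a classical Grassmannian volume $V(a,b)$, a square supergrassmannian $V(a|a,b|b)$, or a Grassmannian $V(a|a,b|a)$ to which Corollary \ref{cor-one} applies directly. The key intermediate Grassmannians will all be well-defined since the hypothesis $\sdim Gr(r|s,m|n)\geq 0$ with $r\geq s$ gives $m-r\geq n-s$ (so in particular $m\geq n$).

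First I would apply Corollary \ref{cor-one}'s double-bundle argument (verbatim, with the same choice of invariant form on the flag variety) to the partial flag supervariety $Fl(s|s,\,r|s,\,m|n)$. Fibering over $Gr(s|s,m|n)$ gives fiber $Gr(r-s|0,\,m-s|n-s)$, while fibering over $Gr(r|s,m|n)$ gives fiber $Gr(s|s,r|s)$. This yields
\[
V(r|s,m|n)\;=\;\frac{V(s|s,m|n)\,V(r-s|0,\,m-s|n-s)}{V(s|s,r|s)}.
\]
Next I would apply the same device to $Fl(s|s,\,n|n,\,m|n)$ (fibers $Gr(s|s,n|n)$ and $Gr(n-s|n-s,\,m-s|n-s)$ over $Gr(n|n,m|n)$ and $Gr(s|s,m|n)$, respectively) to obtain
\[
V(s|s,m|n)\;=\;\frac{V(n|n,m|n)\,V(s|s,n|n)}{V(n-s|n-s,\,m-s|n-s)}.
\]

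Then I would rewrite the factor $V(r-s|0,\,m-s|n-s)$. Lemma \ref{lemma_sign} applied to $Gr(r-s|0,\,m-s|n-s)$ flips it to $(-1)^{(r-s)(n-s)}V(m-r|n-s,\,m-s|n-s)$, and now the odd dimension of subspace and ambient coincide, so Corollary \ref{cor-one} (with parameters $a=n-s$, $b=m-r$, $c=m-s$) gives
\[
V(m-r|n-s,\,m-s|n-s)\;=\;\frac{V(m-r-n+s,\,m-n)\,V(n-s|n-s,\,m-s|n-s)}{V(n-s|n-s,\,m-r|n-s)}.
\]
The classical identity $V(a,b)=V(b-a,b)$ converts $V(m-r-n+s,\,m-n)$ into $V(r-s,\,m-n)$, which is the factor that appears in the target.

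Finally I would substitute the expressions from the three previous steps into the first formula and cancel the factor $V(n-s|n-s,\,m-s|n-s)$ that appears once in the numerator and once in the denominator. What remains is exactly
\[
(-1)^{(r-s)(n-s)}\,\frac{V(s|s,n|n)\,V(n|n,m|n)\,V(r-s,m-n)}{V(s|s,r|s)\,V(n-s|n-s,\,m-r|n-s)},
\]
as required. The one subtle point is orienting the flag varieties and choosing the invariant volume form $\tilde\omega$ consistently with both fibrations (including the careful sign bookkeeping when invoking Lemma \ref{lemma_sign} for the non-square fiber $Gr(r-s|0,\,m-s|n-s)$); this is the only place where real care is needed, since the rest is algebraic manipulation of volume identities already established in this section.
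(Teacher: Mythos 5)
Your proposal is correct and follows essentially the same route as the paper: the same two double bundles over $Fl(s|s,r|s,m|n)$ and $Fl(s|s,n|n,m|n)$, the same use of Lemma \ref{lemma_sign} to convert $V(r-s|0,m-s|n-s)$ into $(-1)^{(r-s)(n-s)}V(m-r|n-s,m-s|n-s)$, and the same application of Corollary \ref{cor-one} with $a=n-s$, $b=m-r$, $c=m-s$. The only differences are cosmetic: you apply the two flag-variety fibrations in the opposite order and you make explicit the classical identity $V(a,b)=V(b-a,b)$, which the paper uses silently.
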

	\begin{proof} We start with the case $r=s$ and use again the same method as in the proof of Corollary \ref{cor-one} and we use the same notations.
		Note that $m>n$. Consider the partial flag variety $Fl(s|s,n|n,m|n)$ and the double bundle
		\[
		Gr(s|s,m|n)\xleftarrow{Gr(n-s|n-s,m-s|n-s)}Fl(s|s,n|n,m|n)\xrightarrow{Gr(s|s,n|n)} Gr(n|n,m|n).
		\]
		Then by Proposition \ref{equalrank} and Proposition \ref{one-zero} we obtain
		$$I({Fl(s|s,n|n,m|n)},\tilde\omega) =V(s|s,n|n)V(n|n,m|n)\neq 0.$$
		From the left bundle we get
		$$I({Fl(s|s,n|n,m|n)},\tilde\omega)=V(n-s|n-s,m-s|n-s)V(s|s,m|n).$$
		That gives $$V(s|s,m|n)=\frac{V(s|s,n|n)V(n|n,m|n)}{V(n-s|n-s,m-s|n-s)}.$$
		For the general statement consider the double bundle
		$$ Gr(r|s,m|n)\xleftarrow{Gr(s|s,r|s)}Fl(s|s,r|s,m|n)\xrightarrow{Gr(r-s|0,m-s|n-s)}Gr(s|s,m|n).$$
		Then 
		$$I({Fl(s|s,r|s,m|n)},\tilde\omega)=V(r-s|0,m-s|n-s)V(s|s,m|n).$$
		On the other hand, from the left bundle 
		$$I({Fl(s|s,r|s,m|n)},\tilde\omega)=V(s|s,r|s)V(r|s,m|n),$$
		and using Lemma \ref{lemma_sign} we get
		$$V(r|s,m|n)=(-1)^{(r-s)(n-s)}\frac{V(s|s,n|n) V(n|n,m|n)V(m-r|n-s,m-s|n-s)}{V(s|s,r|s)V(n-s|n-s,m-s|n-s)}.$$
		Using Corollary \ref{cor-one} for $a=n-s,b=m-r,c=m-s$ we have
		$$V(m-r|n-s,m-s|n-s)=\frac{V(r-s,m-n)V(n-s|n-s,m-s|n-s)}{V(n-s|n-s,m-r|n-s)},$$ and substitution gives the desired formula.
	\end{proof}

	\begin{cor}\label{formula} Assume that $r\geq s$  and $\operatorname{sdim}Gr(r|s,m|n)\geq 0$. Then
		$$\int_{Gr(r|s,m|n)}\omega=(-1)^{s(m+n+r+s)}\binom{n}{s}(2\pi)^A\int_{Gr(r-s,m-n)}\omega,$$
		where $A=(m-r)s+(n-s)r=\dim Gr(r|s,m|n)_1$.
	\end{cor}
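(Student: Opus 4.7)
The plan is to derive Corollary \ref{formula} from Proposition \ref{general_positive} by substituting explicit closed-form expressions for each of the four volumes that appear on its right-hand side, and then simplifying.

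First, I will apply Proposition \ref{equalrank} to obtain $V(s|s,n|n)=(2\pi)^{2s(n-s)}\binom{n}{s}$, and Proposition \ref{one-zero} to obtain $V(n|n,m|n)=(-1)^{n(m-n)}(2\pi)^{n(m-n)}$. These handle the two factors in the numerator of the formula of Proposition \ref{general_positive}.

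Second, I will evaluate the two denominator volumes $V(s|s,r|s)$ and $V(n-s|n-s,m-r|n-s)$, both of the form $V(a|a,b|a)$ where the subspace and ambient odd dimensions agree. For such a volume, applying Lemma \ref{lemma_sign} swaps the subspace with its complement, giving
\[
V(s|s,r|s) = (-1)^{s(r-s)} V(r-s|0,r|s),
\]
and the right-hand volume is now precisely of the type computed by Proposition \ref{one-zero}, yielding $(2\pi)^{s(r-s)}$. Thus $V(s|s,r|s) = (-1)^{s(r-s)}(2\pi)^{s(r-s)}$, and an identical argument gives
\[
V(n-s|n-s,m-r|n-s) = (-1)^{(n-s)(m-r-n+s)}(2\pi)^{(n-s)(m-r-n+s)}.
\]
In the degenerate sdim $=0$ case (when $m-r = n-s$) both factors reduce to $1$, and Proposition \ref{one-zero} extends trivially.

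Third, I will substitute these four values into the formula of Proposition \ref{general_positive} and simplify. A direct polynomial expansion will show that the combined exponent of $2\pi$ equals
\[
2s(n-s)+n(m-n)-s(r-s)-(n-s)(m-r-n+s) = (m-r)s+(n-s)r = A,
\]
which is $\dim Gr(r|s,m|n)_{\ol{1}}$. The combined sign $(-1)^{(r-s)(n-s)+n(m-n)+s(r-s)+(n-s)(m-r-n+s)}$ will reduce, via a mod-$2$ calculation using $x^2\equiv x\pmod 2$, to $(-1)^{s(m+n+r+s)}$. The binomial $\binom{n}{s}$ survives from $V(s|s,n|n)$, and the classical factor $V(r-s,m-n) = \int_{Gr(r-s,m-n)}\omega$ passes through from Proposition \ref{general_positive} unchanged, producing the stated identity.

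The main obstacle will be the sign-and-exponent bookkeeping in the third step: verifying the $2\pi$-exponent cancellation and, especially, the mod-$2$ congruence of the four contributions $(r-s)(n-s)$, $n(m-n)$, $s(r-s)$, and $(n-s)(m-r-n+s)$ against $s(m+n+r+s)$. These are elementary expansions, but require care in tracking signs and in ensuring the boundary cases $r=s$ or $\sdim Gr(r|s,m|n)=0$ are handled correctly.
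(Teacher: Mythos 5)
Your proposal is correct and follows the approach the paper clearly intends (the paper leaves Corollary \ref{formula} unproved as an immediate consequence of Proposition \ref{general_positive}). Your substitutions $V(s|s,n|n)=(2\pi)^{2s(n-s)}\binom{n}{s}$, $V(n|n,m|n)=(-1)^{n(m-n)}(2\pi)^{n(m-n)}$, and the two $V(a|a,b|a)=(-1)^{a(b-a)}(2\pi)^{a(b-a)}$ evaluations via Lemma \ref{lemma_sign} combined with Proposition \ref{one-zero} are all exactly as needed, and the exponent-of-$2\pi$ and mod-$2$ sign bookkeeping both check out (the $2\pi$ exponent reduces to $sm+nr-2sr=A$ and the sign exponent reduces to $s(m+n+r+s)$ mod $2$).
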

	Now Corollary \ref{formula} and Proposition \ref{negativecase} imply Theorem \ref{grassmannmain}. 
	\subsection{The volume of $Q$-Grassmannian}
	In this subsection we assume that $G=Q(n)$ and $P$ is the parabolic subgroup with the Levi subgroup $K=Q(r)\times Q(n-r)$. The homogeneous space $QGr(r,n)=G/P$ is called a $Q$-Grassmannian. It represents the functor
	of all $r|r$-dimensional subspaces in $\mathbb C^{n|n}$ invariant under the fixed odd linear operator $F:\mathbb C^{n|n}\to\mathbb C^{n|n}$.
	As in the case of $GL$ the complex quasireductive group $Q(n)$ has a compact form $\mathcal U=UQ(n)$ and the Lie algebra of $\mathcal U$ is the set of fixed of the involution $\theta$ restricted to $\q(n)$ defined in the previous section.
	It is well defined as $\q(n)\subset\mathfrak{gl}(n|n)$ is $\theta$-stable. We set $\mathcal K=\mathcal U\cap K\simeq UQ(r)\times UQ(n-r)$. Here is an analogue of Proposition \ref{invariantform}.
	\begin{prop}\label{Q-compact} There is an isomorphism $QGr(r,n)\simeq G/P\simeq \mathcal U/\mathcal K$. Furthermore, there exists an unique up to normalization $\mathcal U$-invariant volume form $\omega$ on $\MM:=QGr(r,n)$.
	\end{prop}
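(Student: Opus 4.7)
The plan is to mirror the proof of Proposition \ref{invariantform}, with modifications needed to handle the odd operator $F$. The isomorphism $QGr(r,n) \simeq G/P$ is essentially tautological from the functorial definition: $QGr(r,n)$ represents $F$-stable $(r|r)$-dimensional subspaces of $\C^{n|n}$, and the stabilizer of the standard such subspace is precisely $P$. For the identification $G/P \simeq \mathcal U/\mathcal K$ as real supermanifolds, I would first appeal to the classical fact that the underlying manifold is $(QGr(r,n))_0 \simeq Gr(r,n) \simeq U(n)/(U(r)\times U(n-r))$, so $\mathcal U_0$ acts transitively on $(QGr(r,n))_0$. The natural $\mathcal U$-equivariant map $\mathcal U/\mathcal K \to G/P$ is then an isomorphism on underlying manifolds, and one verifies equality of odd dimensions by comparing $(\operatorname{Lie}\mathcal U/\operatorname{Lie}\mathcal K)_{\ol{1}}$ with $(\g/\p)_{\ol{1}}$ at the base point.

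For the existence and uniqueness of an invariant volume form, I would identify $T_p\MM$ with $\g/\p$ as a $\mathcal K$-module. Since $\q(n) \subset \mathfrak{gl}(n|n)$ is $\theta$-stable and compatible with $F$, this tangent space sits inside the tangent space of the ordinary supergrassmannian $Gr(r|r,n|n)$ as the subspace of $F$-commuting homomorphisms in $\Hom_\C(\C^{r|r},\C^{n-r|n-r})$. Restricting the Hermitian form $H(y_1,y_2) = \operatorname{str}\theta(y_1)y_2$ from Proposition \ref{invariantform} to this subspace should yield a $\mathcal K$-invariant non-degenerate symmetric $\R$-bilinear form $B$; the key point is that $\theta$ preserves the $F$-commuting condition, so the restriction stays non-degenerate. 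The volume form associated to $B$ at $p$ is therefore $\mathcal K$-invariant, and translating by $\mathcal U$ gives a global $\mathcal U$-invariant volume form $\omega$ on $\MM$.

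Uniqueness up to normalization then follows: any $\mathcal U$-invariant volume form is determined by its value at $p$, and this value must be fixed by the connected group $\mathcal K$ acting on the one-dimensional line of volume forms on $T_p\MM$, hence must be a scalar multiple of the chosen one. The main obstacle I anticipate is verifying the non-degeneracy of the restriction of $H$ to the $F$-commuting subspace, which requires explicitly checking that $\theta$ stabilizes this subspace and that the restricted form has no kernel; this is a linear-algebra check that can be carried out by writing $\q(n)$-matrices in block form $\begin{pmatrix} A & B \\ B & A \end{pmatrix}$ and inspecting $\theta$ componentwise.
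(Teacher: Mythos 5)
The first assertion (isomorphism of supermanifolds) is handled as you propose and matches the paper. The gap is in the existence of the invariant volume form. You plan to restrict the Hermitian form $H(y_1,y_2)=\operatorname{str}\theta(y_1)y_2$ from $\Hom_\C(\C^{r|r},\C^{n-r|n-r})$ to the subspace of $F$-commuting maps and claim the restriction stays non-degenerate. This cannot work: the paper explicitly notes that, \emph{in contrast with the ordinary supergrassmannian}, $T_p\MM\simeq\Hom_{\C[F]}(\C^{r|r},\C^{n-r|n-r})$ is \emph{not} self-dual as a $\mathcal K$-module. The standard representation of $Q(k)$ is not self-dual, so $\Hom_{\C[F]}(\C^{r|r},\C^{n-r|n-r})$ and its dual $\Hom_{\C[F]}(\C^{n-r|n-r},\C^{r|r})$ are non-isomorphic $\mathcal K$-modules, and consequently there is no $\mathcal K$-invariant non-degenerate bilinear form on $T_p\MM$ at all. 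Whatever linear-algebra check you carry out on the $\left(\begin{smallmatrix}Z&W\\W&Z\end{smallmatrix}\right)$ block form, the restriction of $H$ must turn out degenerate, so your construction of $B$ and hence of $\omega_p$ fails.

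The paper's fix is a different, weaker argument that suffices: a $\mathcal K$-invariant volume form on $T_p\MM$ exists not because there is an invariant metric, but because $\mathcal K\simeq UQ(r)\times UQ(n-r)$ has no nontrivial characters (since $\q(k)$ is perfect; $[\q(k)_{\ol 1},\q(k)_{\ol 1}]\supset\C I$ already kills the determinant character), and therefore $\mathcal K\subset SL(T_p\MM)$, i.e.\ $\mathcal K$ acts trivially on the Berezinian line. Any nonzero element of that line then translates to a $\mathcal U$-invariant volume form, and uniqueness up to scale follows as in your last paragraph. You should replace the metric-restriction argument with this character argument; the uniqueness part of your write-up is fine as stated.
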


	\begin{proof} The first assertion can be proven in the exact same way as Proposition \ref{invariantform}. To prove the existence of invariant form let us consider a point $p\in \MM$ with stabilizer $\mathcal K$. There is an
		obvious identification of the tangent space $T_p\MM$ with $\Hom_{\mathbb C[F]}(\mathbb C^{r|r},\mathbb C^{n-r|n-r})$. Note that in contrast with the case of undecorated supergrassmannians the $\mathcal K$-module
		$T_p\MM$ is not self-dual. However, since $\mathcal{K}$ admits no nontrivial characters, we have that $\mathcal K\subset SL(T_p\MM)$. It will be useful to choose a precise coordinate system in $T_p\MM$. If we represent a tangent vector by a matrix of the form
		$\left(\begin{matrix}Z&W\\W&Z\end{matrix}\right)$ where $Z,W$ are $r\times (n-r)$-matrices, then
		$$\omega_p=\prod_{i\leq r,j>r}\frac {dz_{ij}d\bar z_{ij}}{dw_{ij}d\bar w_{ij}}.$$
	\end{proof}

	\begin{prop}\label{Qgrassmann} Let $\MM=QGr(n,r)$. Then $\int_{\MM}\omega=(2\pi)^{2r(n-r)}C(r,n)$, where
		\[
	C(r,n)=\begin{cases}\binom{m}{l}\ \text{if}\ n=2m, r=2l, n=2m+1, r=2l+1,2l\\ 0\ \  \ \ \text{if}\ n=2m, r=2l+1\end{cases}.
		\]
		In particular, $\int_{\MM}\omega\neq 0$ if and only if $r(n-r)$ is even.
	\end{prop}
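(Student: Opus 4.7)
The plan is to mirror the argument of Proposition~\ref{equalrank} via the Schwarz--Zaboronsky localization (Corollary~\ref{simp}). I apply it to the odd vector field on $\MM=QGr(r,n)$ coming from
\[
Q \;=\; \begin{pmatrix} 0 & A \\ A & 0 \end{pmatrix} \;\in\; \Lie(\UU)_{\ol{1}}, \qquad A = (1+\mathbf i)\operatorname{diag}(a_1,\dots,a_n),
\]
with $a_1,\dots,a_n\in\R^*$ chosen so that $a_i\pm a_j\neq 0$ whenever $i\neq j$. Such $Q$ lies in the odd Cartan of $\mathfrak{u}\mathfrak{q}(n)$, and $Q^2$ acts on $\C^{n|n}$ with pairwise distinct eigenvalues $2\mathbf i a_i^2$.

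First I identify $Z(Q)$. Any zero corresponds to an $F$-stable, $Q$-invariant $(r|r)$-subspace of $\C^{n|n}$; $Q$-invariance forces $Q^2$-invariance, which, by the distinctness of eigenvalues, forces the subspace to be a sum of coordinate pairs. Combined with $F$-stability and the dimension constraint, the only such subspaces are $V_I:=\langle e_i,f_i:i\in I\rangle$ for $I\subset\{1,\dots,n\}$ with $|I|=r$. Hence $|Z(Q)|=\binom{n}{r}$ and each zero is nondegenerate.

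Next I compute the local contributions. At $p_I$, identify $T_{p_I}\MM$ with $\Hom_{\C[F]}(V_I,V_{I^c})$; an $F$-invariant basis, adapted from that of Proposition~\ref{equalrank}, is
\[
u_{ij} \;=\; e_i\otimes e_j^*+f_i\otimes f_j^* \quad(\text{even}), \qquad v_{ij} \;=\; f_i\otimes e_j^*+e_i\otimes f_j^* \quad(\text{odd}),
\]
for $i\in I$, $j\in I^c$. A direct Lie-superalgebra calculation gives
\[
Q\cdot u_{ij} \;=\; (1+\mathbf i)(a_j-a_i)\,v_{ij}, \qquad Q\cdot v_{ij} \;=\; (1+\mathbf i)(a_i+a_j)\,u_{ij},
\]
so by Corollary~\ref{complex},
\[
\alpha(T_{p_I}\MM,\omega_{p_I}) \;=\; \prod_{i\in I,\,j\in I^c}\frac{a_i+a_j}{a_j-a_i}.
\]
Applying Corollary~\ref{simp} yields
\[
\int_{\MM}\omega \;=\; (2\pi)^{2r(n-r)}\sum_{|I|=r}\prod_{i\in I,\,j\in I^c}\frac{a_i+a_j}{a_j-a_i}.
\]

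The main obstacle is the combinatorial identity
\[
\sum_{\substack{I\subset\{1,\dots,n\}\\ |I|=r}}\ \prod_{i\in I,\,j\in I^c}\frac{a_i+a_j}{a_j-a_i} \;=\; C(r,n).
\]
Since the left side equals the $a_i$-independent integral, it is constant in the $a_i$; I would therefore either specialize conveniently (e.g.\ $a_i=i$ reduces to a Vandermonde-type evaluation, while $a_i=q^{i-1}$ identifies the sum with a Gaussian $q$-binomial in the limit $q\to 1$) or proceed by induction on $n$, splitting the sum according to whether $n\in I$ and reducing to a recursion whose unique solution with the required boundary cases is $C(r,n)$. The vanishing case $(n,r)=(2m,\,2l+1)$ is expected to come from a sign-reversing fixed-point-free involution: grouping indices into $m$ pairs and swapping $a_{2k-1}\leftrightarrow a_{2k}$ in a pair singled out by the parity of $|I\cap\{2k-1,2k\}|$ negates each summand and has no fixed points when $r$ is odd, forcing cancellation. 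The same pairing bookkeeping, in the non-vanishing cases, counts the fully-matched configurations and produces the claimed $\binom{m}{l}$. Small consistency checks (e.g.\ $n=2,r=1$ gives $0$; $n=3,r=1,2$ gives $1$; $n=4,r=2$ gives $2$) confirm the identity and align with the values $\binom{\lfloor n/2\rfloor}{\lfloor r/2\rfloor}$.
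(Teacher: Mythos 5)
Your overall strategy matches the paper's: use Schwarz--Zaboronsky localization (Corollary~\ref{simp}) with the same diagonal $Q\in\Lie(UQ(n))_{\ol 1}$, identify $Z(Q)$ with $r$-element subsets $I\subset\{1,\dots,n\}$, compute each local contribution $\alpha(T_{p_I}\MM,\omega_{p_I})$ via Corollary~\ref{complex}, and reduce to the combinatorial identity
\[
\sum_{|I|=r}\ \prod_{i\in I,\ j\notin I}\frac{a_i+a_j}{a_i-a_j}\;=\;C(r,n).
\]
(You write $a_j-a_i$ in the denominator; a direct computation of $[Q,e_i\otimes e_j^*+f_i\otimes f_j^*]$ gives $(1+\mathbf i)(a_i-a_j)v_{ij}$, so your sign is off by $(-1)^{r(n-r)}$. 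This happens to be harmless because $C(r,n)$ can only be nonzero when $r(n-r)$ is even, but it should be corrected.)

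The genuine gap is that the combinatorial identity is asserted, not proved. You list several possible mechanisms without carrying any of them out, and the one you describe in most detail --- the sign-reversing involution obtained by swapping the membership of $2k-1$ and $2k$ in $I$ for a pair with $|I\cap\{2k-1,2k\}|=1$ --- does not negate the summand. Only the single factor $\frac{a_{2k-1}+a_{2k}}{a_{2k-1}-a_{2k}}$ flips sign under this swap; the remaining factors involving $a_{2k-1}$ or $a_{2k}$ change to different rational expressions, not their negatives. Concretely, at $a_i=i$ with $n=4$, $r=1$, one has $\alpha(\{1\})=-10$ and $\alpha(\{2\})=45$, which are not negatives of each other (the full sum $-10+45-70+35=0$ does vanish, but not via your pairing). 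Your parenthetical suggestion of ``splitting the sum according to whether $n\in I$'' is in fact the right move: the paper observes that the sum is a symmetric, degree-$0$ rational function with Vandermonde denominator (hence a constant) and then specializes $a_n=0$ to obtain the recursion $C(r,n)=C(r,n-1)+(-1)^{n-r}C(r-1,n-1)$, whose unique solution with $C(1,2)=0$ and the symmetry $C(r,n)=(-1)^{r(n-r)}C(n-r,n)$ gives the stated values. You would need to actually derive and solve such a recursion (or carry through one of the other specializations) for the proposal to constitute a proof.
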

	\begin{proof} The idea of the proof is to use again the Schwarz--Zaboronsky formula for a suitably chosen odd vector field. We observe that $Q$ constructed in the proof of proposition
		\ref{equalrank} is an element of $\operatorname{Lie}UQ(n)$ via the embedding $UQ(n)\subset U(n|n)$. It is straightforward to check that $Z(Q)$ has $\binom{n}{r}$ points and all these points are obtained from the point
		$p$ by the action of the Weyl group $S_n$. Every such point represents the coordinate subspace of $\mathbb C^{n|n}$ with basis $u_{i_1},\dots,u_{i_r},v_{i_1},\dots,v_{i_r}$ if $u_1,\dots,u_n,v_1,\dots,v_n$ is the standard basis.
		Thus, we enumerate the points of $Z(Q)$ by $r$ element subsets $S$ of $\{1,\dots,n\}$ and use the notation $p_S$. Choosing exactly half of the basis $u_{ij}^+,v_{ij}^+$ defined in the proof Proposition  \ref{equalrank} we obtain
		$$\alpha(S):=\alpha(T_{p_{S}}\MM)=\prod_{i\in S,j\notin S}\frac{a_i+a_j}{a_i-a_j}.$$
		Thus, we have reduced the proof of the theorem to computation of the expression
		\[
		C(r,n):=\sum_{|S|=r}\alpha(S).
		\]
		First, note that $C(r,n)$ is a symmetric rational function in $a_1,\dots,a_n$ of homogeneous degree $0$ with denominator given by the Vandermonde determinant. That actually means that it is a constant which does not depend on
		$a_1,\dots,a_n$ as it should be by Theorem \ref{SZ}.
		Next, the substitution $a_n=0$ gives the recursions:
		\[
		C(r,n)=C(r,n-1)+(-1)^{n-r}C(r-1,n-1),\quad C(r,n)=(-1)^{r(n-r)}C(n-r,n), \quad C(1,2)=0.
		\]
		From this one may show that $C(r,n)$ has the desired values.
	\end{proof}
	\section{Splitting subgroups for $GL(m|n)$ and $Q(n)$}
	\begin{thm}\label{main_splitting}
		
		(a) Let $G=GL(m|n)$, $K=GL(r|s)\times GL(m-r|n-s)$. Then $K$ is a splitting subgroup of $G$ if and only if $\sdim Gr(r|s,m|n)\geq 0$.
		
		(b) Let $G=Q(n)$, $K=Q(r)\times Q(n-r)$. Then $K$ is a splitting subgroup of $G$ if and only if $r(n-r)$ is even.
	\end{thm}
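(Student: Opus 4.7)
The plan is to reduce both parts via a super unitary trick to the volume computations of Section 5, with necessity handled by the structural results from Sections 2 and 3. Let $\UU$ denote the compact real form of $G$, namely $U(m|n)$ or $UQ(n)$, and set $\KK=\UU\cap K$; by Propositions \ref{invariantform} and \ref{Q-compact}, the $\UU$-orbit of the identity coset in $G/K$ is isomorphic to the compact real supermanifold $\MM=\UU/\KK$, which is precisely the (super)Grassmannian carrying the invariant volume form $\omega$ from Section 5.

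For sufficiency, assume the stated condition holds, so that by Theorem \ref{grassmannmain} (respectively Proposition \ref{Qgrassmann}) the volume $\int_\MM\omega$ is nonzero. I would consider the linear functional
\[
\Phi\colon\C[G/K]\longrightarrow\C,\qquad \Phi(f)=\int_\MM (f|_\MM)\,\omega,
\]
defined by restriction followed by integration. Invariance of $\omega$ makes $\Phi$ a $\UU$-module homomorphism to the trivial module; since $\C[G/K]$ is locally finite as a $G$-module and $\u=\Lie\UU$ is a real form of $\g$, the super unitary trick (i.e.\ $\UU$-equivariance together with $\C$-linearity on each finite-dimensional invariant subspace yields $\g$-equivariance, because $\g=\u\otimes_{\R}\C$) promotes $\Phi$ to a $G$-equivariant map. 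Since $\Phi(1)=\int_\MM\omega\neq 0$, the composition $\C\hookrightarrow\C[G/K]\xrightarrow{\Phi}\C$ is a nonzero scalar, exhibiting the splitting.

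For necessity in (a), I would apply Corollary \ref{cor_sdim_constraint}. The density hypothesis is easily checked: a generic $x=\left(\begin{smallmatrix}0&B\\A&0\end{smallmatrix}\right)\in\mathfrak{gl}(m|n)_{\ol 1}$ has $x^2=\mathrm{diag}(BA,AB)$ semisimple, and the same reasoning applies block-by-block to the Levi $\mathfrak{gl}(r|s)\oplus\mathfrak{gl}(m-r|n-s)$. Hence $K$ splitting forces $\sdim G/K\geq 0$, and a direct computation gives $\sdim G/K=2(r-s)((m-r)-(n-s))=2\sdim Gr(r|s,m|n)$, from which the equivalence follows. For necessity in (b), I would invoke Proposition \ref{splittingQ}: any splitting subgroup of $Q(n)$ contains (up to conjugacy) a copy of $Q(2)^d$ when $n=2d$, or $Q(2)^d\times Q(1)$ when $n=2d+1$. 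If $r(n-r)$ is odd then $n$ is even with $r$ and $n-r$ both odd, and the maximum number of $Q(2)$ factors one can fit inside $Q(r)\times Q(n-r)$ is $\lfloor r/2\rfloor+\lfloor(n-r)/2\rfloor=d-1<d$, contradicting splitting.

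The main subtlety will be the super unitary trick step: one must confirm that integration over the compact real subsupermanifold $\MM\subset G/K$ yields a functional equivariant for the full complex supergroup $G$ and --- crucially, in contrast with the classical case --- that it does not automatically vanish on the constant function. It is exactly the latter phenomenon that renders Section 5 indispensable, since for a compact homogeneous supermanifold the Berezin integral of an invariant volume form can vanish even when the underlying classical measure is positive; Theorem \ref{grassmannmain} and Proposition \ref{Qgrassmann} detect precisely when this does not happen.
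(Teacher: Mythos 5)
Your proof is correct and takes essentially the same approach as the paper: the unitary trick (integration of restrictions over the compact real form $\UU/\KK$) gives sufficiency via the volume computations of Theorem \ref{grassmannmain} and Proposition \ref{Qgrassmann}, while necessity comes from the earlier structural constraints — Corollary \ref{cor_sdim_constraint} for (a) and Proposition \ref{splittingQ} for (b). You merely spell out details (the promotion from $\UU$-invariance to $G$-equivariance via local finiteness, the identity $\sdim G/K = 2\sdim Gr(r|s,m|n)$, the parity count for $Q(r)\times Q(n-r)$) that the paper's proof leaves terse.
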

	\begin{proof} If $\sdim Gr(r|s,m|n)<0$ or $r(n-r)$ is odd, the necessary condition for $K$ does not hold. Therefore it remains to show that if  $\sdim Gr(r|s,m|n)\geq 0$ or $r(n-r)$ is even the trivial $G$-submodule splits in
		$\mathbb C[G/K]$. We use unitary trick.
		Consider the embedding $\MM=\mathcal U/\mathcal K\to G/K$ as the closed $\mathcal U$-orbit of $p\in G/K$ with stabilizer $K$. Define the map $I:\mathbb C[G/K]\to\mathbb C$ by
		$$I(f):=\int_{\MM}f\omega.$$
		Note that $I$ is $\mathcal U$-invariant and hence $G$-invariant.
		Theorem \ref{grassmannmain} and Theorem \ref{Qgrassmann} imply $I(1)\neq 0$. Therefore $\mathbb C$ splits.
	\end{proof}
	\begin{cor}\label{defect-splitting}
		(a) Let $G=GL(m|n)$ with defect $d=\min(m,n)$, then the defect subgroup $D\simeq SL(1|1)^d$ is splitting in $G$ and hence any subgroup of $G$ containing $D$ is splitting.
		
		(b) Let $G=Q(2d)$ (respectively, $Q(2d+1)$) then the subgroup $Q(2)^{d}$ (respectively, $Q(2)^d\times Q(1)$) is splitting.
	\end{cor}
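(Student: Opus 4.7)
The plan is to derive both parts from Theorem \ref{main_splitting} by a telescoping application of Corollary \ref{cor_splitting_subgroup}(2), exploiting the fact that the splitting criteria of Theorem \ref{main_splitting} are especially lenient along a carefully chosen descending chain of block-diagonal Levi subgroups.

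For part (a), I would assume without loss of generality that $m\leq n$, so that $d=m$, and build the chain
\[
GL(m|n)\supset GL(1|1)\times GL(m-1|n-1)\supset GL(1|1)^{2}\times GL(m-2|n-2)\supset\cdots\supset GL(1|1)^{m}\times GL(0|n-m).
\]
Each one-step inclusion $GL(1|1)\times GL(a-1|b-1)\subset GL(a|b)$ is splitting by Theorem \ref{main_splitting}(a), since
\[
\sdim Gr(1|1,a|b)=(1-1)\bigl((a-1)-(b-1)\bigr)=0\geq 0.
\]
Transitivity (Corollary \ref{cor_splitting_subgroup}(2)) then promotes the whole chain to a splitting of $GL(1|1)^{m}\times GL(0|n-m)$ in $GL(m|n)$. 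Finally, I would observe that the defect subgroup $SL(1|1)^{m}$ sits inside $GL(1|1)^{m}\times GL(0|n-m)$ with coinciding odd parts: the $GL(0|n-m)$ factor is purely even, and $(\s\l(1|1)^{m})_{\ol{1}}=(\g\l(1|1)^{m})_{\ol{1}}$. Lemma \ref{lem_split_quotient} then yields that $SL(1|1)^{m}$ is splitting in $GL(1|1)^{m}\times GL(0|n-m)$, and one more application of transitivity completes the proof of the first assertion. The tail statement about intermediate subgroups is immediate from Corollary \ref{cor_splitting_subgroup}(1).

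For part (b) the idea is identical but Lemma \ref{lem_split_quotient} is no longer needed. When $n=2d$, I would peel off copies of $Q(2)$ along
\[
Q(2d)\supset Q(2)\times Q(2d-2)\supset Q(2)^{2}\times Q(2d-4)\supset\cdots\supset Q(2)^{d},
\]
each step being splitting by Theorem \ref{main_splitting}(b) because the product $2\cdot 2k$ is even. When $n=2d+1$, the analogous chain terminates at $Q(2)^{d}\times Q(1)$, with each splitting condition $2\cdot(2k-1)$ again even. Transitivity delivers the conclusion.

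The real obstacle is not present here but in Theorem \ref{main_splitting}, where the nonvanishing of supergrassmannian volumes is established via the Schwarz--Zaboronsky localization. Once that input is in hand, the present corollary reduces to the elementary combinatorial observation that the constant value $\sdim Gr(1|1,a|b)=0$ and the uniformly even products $r(n-r)$ encountered along these towers keep every link in the chain admissible, so no further work is required.
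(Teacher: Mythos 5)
Your proposal is correct and takes essentially the same approach as the paper: iterated application of Theorem \ref{main_splitting} via Corollary \ref{cor_splitting_subgroup}(2) to peel off one $GL(1|1)$ (resp.\ $Q(2)$) at a time, followed by Lemma \ref{lem_split_quotient} to pass from $GL(1|1)^d\times GL(0|n-m)$ down to the defect subgroup $SL(1|1)^d$. The only cosmetic differences are the choice of convention ($m\le n$ vs.\ $n\le m$) and that you collapse into a single application of Lemma \ref{lem_split_quotient} what the paper splits into two small steps.
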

	\begin{proof} For (b) we just use Theorem \ref{main_splitting} and transitivity Corollary \ref{cor_splitting_subgroup}(2). For (a) assume that $d=n$, then by the same argument $GL(1|1)^d\times GL(m-n)$ is splitting.
		Furthermore, $H:=GL(1|1)^d$ is splitting in  $GL(1|1)^d\times GL(m-n)$. Therefore $H$ is splitting in $G$. Finally $D$ is splitting in $H$ by Lemma \ref{lem_split_quotient}. 
	\end{proof}
	
	\bibliographystyle{amsalpha}

\begin{thebibliography}{99999999}
		\bibitem[B]{B} D. Benson. {\em Modular representation theory: new trends and methods},  Springer Science $\&$ Business Media Vol. 1081, (1984).
		\bibitem[BoKN1]{BoKN1} B. Boe, J. Kujawa, B. Nakano. {\em Cohomology and support varieties for Lie superalgebras}, Transactions of the American Mathematical Society 362.12, 6551-6590 (2010).
		\bibitem[BoKN2]{BoKN2} B. Boe, J Kujawa, D. Nakano. {\em Cohomology and support varieties for Lie superalgebras II}, Proceedings of the London Mathematical Society 98.1, 19-44 (2009).
		\bibitem[GGNW]{GGNW}   D. Grantcharov, N. Grantcharov, D. Nakano, J. Wu. {\em On BBW parabolics for simple classical Lie superalgebras}, Advances in Mathematics 381, 107647 (2021).
		\bibitem[EAS]{EAS} I. Entova-Aizenbud, V. Serganova. {\it Jacobson-Morozov lemma for algebraic supergroups}, Advances in Mathematics 398, 108240, (2022).
		\bibitem[GHSS]{GHSS} M. Gorelik, C. Hoyt, V. Serganova, A. Sherman. {\it The Duflo-Serganova functor, vingt ans apres}, Journal of the Indian Institute of Science, 102, 961-1000 (2022).
		\bibitem[Ger]{Ger} J. Germoni. {\it Indecomposable representations of osp(3,2), D(2,1;$\alpha$), and $G(3)$}, Boletin de la Academia Nacional de Ciencias 65, 147-163, (2000).
		\bibitem[M]{M} Yu. I. Manin. {\it Gauge fields and complex geometry}, Moscow Izdatel Nauka (1984).
		\bibitem[MPV]{MPV} Yu. I. Manin, I. Penkov, A. A. Voronov. {\it Elements of supergeometry}, Journal of Soviet Mathematics 51.1, 2069-2083 (1990).
		\bibitem[MaT]{MaT} A. Masuoka, Y. Takahashi. {\it Geometric construction of quotients G/H in supersymmetry}, Transformation Groups 26.1, 347--375, (2021).
		\bibitem[PS]{PS} I.B. Penkov, I.A. Skornyakov. {\em Projectivity and affinity of flag supermanifolds}, Russian Mathematical Surveys 40.1, 233 (1985).
		\bibitem[SZ]{SZ} A. Schwarz, O. Zaboronsky. {\it Supersymmetry and localization}, Comm. Math. Phys. 183, 463--476 (1997).
		\bibitem[S1]{S1} V. Serganova {\it Quasireductive supergroups}, New Developments in Lie Theory and its Applications 544, 141--159 (2011).
		\bibitem[S2]{S2} V. Serganova {\it On generalizations of root systems}, Communications in Algebra 24.13, 4281-4299 (1996).
		\bibitem[Sh1]{Sh1} A. Sherman. {\it Spherical indecomposable representations of Lie superalgebras}, Journal of Algebra 547, 262-311 (2020).
		\bibitem[Sh2]{Sh2}A. Sherman. {\it Spherical supervarieties}, Annales de L'institut Fourier 71, 1449-1492 (2021).
		\bibitem[V]{V} Th. Voronov. {\it On volumes of classical supermanifolds}, Sbornik: Mathematics 207.11, 1512 (2016).		
		\bibitem[Z]{Z} V. Zakharevich. {\it Localization and stationary phase approximation on supermanifolds}, Journal of Mathematical Physics 58.8, 083506 (2017).
	\end{thebibliography}

\textsc{\footnotesize Vera Serganova, Dept. of Mathematics, University of California at Berkeley, Berkeley, CA 94720} 

\textit{\footnotesize Email address:} \texttt{\footnotesize serganov@math.berkeley.edu}

\textsc{\footnotesize Alexander Sherman, Dept. of Mathematics, Ben Gurion University, Beer-Sheva,	Israel} 

\textit{\footnotesize Email address:} \texttt{\footnotesize xandersherm@gmail.com}
\end{document}